\def\NZQ{\mathbb}               
\def\NN{{\NZQ N}}
\def\ZZ{{\NZQ Z}}
\def\m{\mathfrak{m}}
\def\n{\mathfrak{n}}
\def\e{\varepsilon}
\def\GL{{\operatorname{GL}}}          
\def\M{{\operatorname{M}}}            
\def\vpmod{\!\!\pmod}
\DeclareMathOperator*{\depth}{depth}
\DeclareMathOperator*{\Tor}{Tor}
\DeclareMathOperator*{\ITor}{ITor}
\DeclareMathOperator*{\Ext}{Ext}
\DeclareMathOperator*{\Der}{Der}
\DeclareMathOperator*{\Ch}{char}
\DeclareMathOperator*{\ann}{ann}
\DeclareMathOperator*{\Ho}{H}
\DeclareMathOperator*{\IH}{IH}
\newtheorem{theorem}{Theorem}[section]
\newtheorem{lemma}[theorem]{Lemma}
\newtheorem{corollary}[theorem]{Corollary}
\newtheorem{proposition}[theorem]{Proposition}
\newtheorem{remark}[theorem]{Remark}
\newtheorem{definition}[theorem]{Definition}
\newtheorem{question}[theorem]{Question}
\newtheorem*{acknowledgement}{Acknowledgement}
\title{Ascent and descent of the Golod property along algebra retracts}
\author{{Anjan Gupta}\\
Department of Mathematics, IIT Bombay}
\thanks{Corresponding author: Anjan Gupta; 
{\it email: agmath@gmail.com, anjan@math.iitb.ac.in}}
\begin{document}
\begin{abstract} 
We study ascent and descent of the Golod property along an algebra retract. We characterise trivial extensions of modules, fibre products of rings to be Golod rings. We present a criterion for a graded module over a graded affine algebra of characteristic zero to be a Golod module.  
\end{abstract}
\maketitle
\noindent {\it Mathematics Subject Classification: 13D02, 13D40, 16S30, 16S37.}

\noindent {\it Key words: Golod modules, Massey products, fibre products.}


\section{Introduction}
Let  $R$ be a local ring with maximal ideal $\m$ and residue field $R/\m = k$. Let $M$ be a finitely generated $R$-module. The generating function of the sequence of Betti numbers of the minimal free resolution of $M$ over $R$ is a formal power series in $\ZZ[|t|]$. This series is called the Poincar\'e series of $M$ over $R$ and is denoted by $P^R_M(t)$ (Definition \ref{poin}). 
J-P. Serre showed that this series is coefficient-wise bounded above by a series representing a rational function. The module $M$ is said to be a Golod module when the Poincar\'e series coincides with the upper bound given by Serre (Definition \ref{p3}). 
The ring $R$ is said to be a Golod ring if its residue field $k$ is a Golod $R$-module. We refer the reader for details regarding Golod rings and Golod modules to the survey article \cite{av} by Avramov. The main objectives of this article are to study transfer of the Golod property along algebra retracts and more generally large homomorphisms, to establish a connection between the Golod property of a module and its trivial extension and finally to characterise the Golod property of fibre products of local rings.

%

A subring of a ring is called an algebra retract if the inclusion map has a left inverse (Definition \ref{p1}). Several authors have studied how ring-theoretic properties transfer along algebra retracts from different perspectives. Basic properties like normality of domains, semi normality, regularity, complete intersection, Koszul, Stanley-Reisner are known to descend along algebra retracts (see \cite{apassov}, \cite{epstein}, \cite{douglas}, \cite{ogushi}). On the other hand, properties like Cohen-Macaulay, Gorenstein are not inherited by an algebra retract in general. We refer the reader to \cite{epstein} for a very good exposition on this theme. In the present article we prove the following:

\begin{theorem}\label{i1}
Let $j : (R, \mathfrak{m}) \rightarrow (A, \mathfrak{n})$ be an algebra retract with a section $p: (A, \mathfrak{n}) \rightarrow (R, \mathfrak{m})$. Let $M$ be a finitely generated $R$-module which is Golod when viewed as an $A$-module via the homomorphism $p$. Then $M$ is also a Golod $R$-module.
\end{theorem}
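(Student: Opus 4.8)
The plan is to reduce the statement to a Poincar\'e series identity via the theory of large homomorphisms, and then to settle that identity by transporting the Golod resolution of $M$ over $A$ down to $R$.

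The first point is that $p$ is a \emph{large} homomorphism in the sense of Levin. By functoriality, the composite $\Tor^{R}_{\bullet}(k,k)\xrightarrow{j_{*}}\Tor^{A}_{\bullet}(k,k)\xrightarrow{p_{*}}\Tor^{R}_{\bullet}(k,k)$ equals $(p\circ j)_{*}=\mathrm{id}$, so $p_{*}$ is split surjective; this is precisely the defining property of a large homomorphism. By Levin's theorem on large homomorphisms, $P^{A}_{N}(t)=P^{R}_{N}(t)\cdot P^{A}_{R}(t)$ for every finitely generated $R$-module $N$, which I will use for $N=M$.

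Next I would set up compatible presentations. Since $p\circ j=\mathrm{id}$, the inclusion $\m/\m^{2}\hookrightarrow\n/\n^{2}$ is split and a complement is spanned by the classes of suitable elements $z_{1},\dots,z_{s}\in\ker p$, with $s=\operatorname{edim}A-\operatorname{edim}R$. Passing to completions, fix a minimal Cohen presentation $Q_{R}\twoheadrightarrow\widehat{R}$ and extend it to a minimal Cohen presentation $Q_{A}=Q_{R}[\![z_{1},\dots,z_{s}]\!]\twoheadrightarrow\widehat{A}$ sending the new variables to lifts of the $z_{i}$. Because $\ker p$ annihilates $M$ (the $A$-action factoring through $p$), a Koszul-complex computation gives $K^{A}\otimes_{A}M\simeq(K^{R}\otimes_{R}M)\otimes_{k}\Lambda^{\bullet}(k^{s})$ as complexes, so the numerators of the two Serre bounds obey
\[
\sum_{i}\dim_{k}H_{i}(K^{A}\otimes_{A}M)\,t^{i}=(1+t)^{s}\sum_{i}\dim_{k}H_{i}(K^{R}\otimes_{R}M)\,t^{i}.
\]
Feeding this and the Levin identity into the hypothesis that $M$ is Golod over $A$ — i.e.\ that $P^{A}_{M}(t)$ equals its Serre upper bound over $A$ — the assertion that $M$ is Golod over $R$ becomes equivalent to the single ring-theoretic identity
\[
(1+t)^{s}\Big(1-\sum_{i\ge1}\dim_{k}H_{i}(K^{R})\,t^{i+1}\Big)=P^{A}_{R}(t)\Big(1-\sum_{i\ge1}\dim_{k}H_{i}(K^{A})\,t^{i+1}\Big).
\]

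The third step — proving this identity — is the heart of the matter, and is where I expect the main difficulty to lie. It is not a formal consequence of largeness: the identity fails for some algebra retracts (for instance, those over which $A$ carries no Golod module at all), so the hypothesis on $M$ must be used essentially. The route I would take is to pass from Poincar\'e series to resolutions. Realize the minimal $A$-free resolution of $M$ following Golod, from the Koszul DG algebra $K^{A}$, its positive homology, and a DG-module resolution of $M$, with the Golod hypothesis supplying the trivial Massey operations that make this resolution minimal. Then, exploiting the decomposition of $K^{A}$ over $K^{R}$ coming from the variables $z_{i}\in\ker p$ and the fact that these act as zero on $M$, one tries to show that the trivial Massey structure over $A$, once the $z_{i}$-contributions are stripped off, induces a trivial Massey structure over $R$ realizing a minimal $R$-free resolution of $M$ — necessarily of size equal to Serre's bound over $R$ — so that $M$ is Golod over $R$ and the displayed identity drops out. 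The obstacle is structural: the Serre bound for a module is attached to a regular presentation, and although $Q_{A}$ surjects onto $Q_{R}$ compatibly with $p$, the defining ideal of $\widehat{A}$ in $Q_{A}$ agrees with that of $\widehat{R}$ in $Q_{R}$ only modulo the $z_{i}$ (the discrepancy encoding the multiplication on $\ker p$), so resolutions do not simply base-change; one must instead keep careful track of how $\ker p$ sits inside $A$ — in particular using that $A\cong R\oplus\ker p$ as $R$-modules — and check directly that the vanishing conditions defining triviality of the Massey operations are preserved on restriction of scalars along $p$.
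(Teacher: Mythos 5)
Your first two steps are sound: the product formula $P^A_M(t)=P^A_R(t)\,P^R_M(t)$ is available for a retract (this is exactly Theorem \ref{p5}, proved via Herzog's resolution $F_*\otimes_RG_*$ rather than from largeness alone), and since $\ker p$ kills $M$ one indeed gets $\kappa^A_M(t)=(1+t)^s\kappa^R_M(t)$. The difficulty is that your third step --- which you yourself call the heart of the matter --- is never carried out, and the reduction preceding it gains nothing. Indeed, because $M$ is Golod over $A$, Levin's Theorem \ref{levingm} already forces $A$ to be a Golod ring, so $P^A_k(t)=(1+t)^{\operatorname{edim}A}/\bigl(1-t(\kappa_A(t)-1)\bigr)$; combining this with $P^A_k=P^A_RP^R_k$ and substituting into your displayed identity shows that the identity is equivalent to $P^R_k(t)=(1+t)^{\operatorname{edim}R}/\bigl(1-t(\kappa_R(t)-1)\bigr)$, i.e.\ to the assertion that $R$ is a Golod ring --- which is precisely the case $M=k$ of the theorem being proved. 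Your proposed way of establishing it is then to show directly, by transporting trivial Massey operations from $K^A$ down to $K^R$, that $M$ is Golod over $R$ --- the original statement --- and this is exactly the part left as ``one tries to show'', with the real issue (why a trivial Massey structure on $K^A$ and on $K^A\otimes_AM\cong K^M\otimes_k\Lambda(k^s)$ induces one on $K^R$ and $K^M$) explicitly unresolved. As written, the proposal is a reduction of the statement to itself plus an unproved descent step, not a proof.

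For comparison, the paper proves this without any Poincar\'e series bookkeeping or Massey operations: by Theorem \ref{p5} the minimal $A$-free resolution of $M$ is $H_*=F_*\otimes_RG_*$, so the minimal $R$-free resolution $G_*$ is a retract of $H_*$, and by Lemma \ref{p2} the Koszul complex $K^R$ is a DG algebra retract of $K^A$; hence the double complex $G_*\otimes_RK^R$ is a retract of $H_*\otimes_AK^A$, and the spectral sequence $\Tor^R_p(M,\Ho_q(R))\Rightarrow\Ho_{p+q}(M)$ is a retract of its analogue over $A$. Since Golodness is characterised (Theorem \ref{p4}, Remark \ref{p4s}) by vanishing conditions --- $d^r_{pq}=0$ for $r\geq 2$, $q>0$, and $E^\infty_{pq}=0$ for $q>0$ --- and such vanishing passes to retracts of spectral sequences, the theorem follows at once. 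If you prefer to complete your own route, the missing descent can be made rigorous along the same retract idea: push a defining system in $(K^M,K^R)$ into $(K^A\otimes_AM,K^A)$ via $j'$ and the inclusion $K^M\hookrightarrow K^M\otimes_k\Lambda(k^s)$, use triviality of Massey products over $A$ (Proposition \ref{p6.3}), and pull the bounding element back with $p'$ and the projection onto the $\Lambda^0$-component; but that verification is precisely your ``stripping off the $z_i$-contributions'', and in the proposal it is absent.
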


The Golod property does not ascend along an algebra retract in general as seen by any non-Golod local ring containing its residue field. So certain assumptions are necessary for an affirmative answer. Our main result stated below presents one such assumption.

\begin{theorem}\label{i2}
Let $j:(R, \m) \rightarrow (A, \n)$ be an algebra retract which admits sections (possibly equal) $p$ and $p'$. Let $\ker(p) = I$ and $\ker(p') = I'$ satisfy $II' = 0$. 
Consider $R$-module structures on $I$, $I'$ via the retract map $j$.
Then the ideal $I$ is a Golod $R$-module if and only if $I'$ is so. 

Let $N$ be an $R$-module. If we consider $N$ as an $A$-module via any of the maps $p, p'$, then $N$ is a Golod $A$-module if and only if $N$ is a Golod $R$-module and $I$ (equivalently $I'$) is a Golod $R$-module. In particular, $A$ is a Golod ring if and only if $I$ (equivalently $I'$) is a Golod $R$-module.
\end{theorem}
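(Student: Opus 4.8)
The plan is to translate both statements into identities between Poincaré series and Koszul‑homology series. The basic input is that a section of an algebra retract is a \emph{large} homomorphism in Levin's sense, so that for an $R$-module $N$ regarded over $A$ through $p$ (or $p'$) one has $P^A_N(t)=P^A_R(t)\cdot P^R_N(t)$, where on the right $R$ denotes the $A$-module $A/\ker p$; comparing the two resulting expressions for $P^A_k(t)$ shows $P^A_R(t)$ does not depend on which section is used. Before using this I would dispose of two elementary points. First, $I\cong I'$ as $R$-modules: the maps $I'\to I$, $w\mapsto w-j(p(w))$, and $I\to I'$, $x\mapsto x-j(p'(x))$, land in $\ker p$, resp.\ $\ker p'$, are $R$-linear (the $R$-actions being via $j$), and are mutually inverse because $p\circ j=p'\circ j=\mathrm{id}_R$; this already gives the equivalence ``$I$ Golod $\iff I'$ Golod''. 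Second, the hypothesis $II'=0$ lets us identify module structures: for $x\in I$, $a\in A$ one has $ax-j(p'(a))x=(a-j(p'(a)))x\in I'I=0$, so the \emph{ideal} $I$, as an $A$-module, is exactly the $R$-module $I$ restricted through $p'$ (and symmetrically for $I'$ through $p$); hence the large-homomorphism formula applies to $I$ and $I'$, and also $I^2\subseteq\m I$, so that $\n/\n^2\cong\m/\m^2\oplus I/\m I$ and $\operatorname{embdim}A=\operatorname{embdim}R+\mu$ with $\mu=\dim_k I/\m I$.

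Two identities then do the work. From the exact sequence $0\to I\to A\to R\to 0$ of $A$-modules (with $R=A/\ker p$, $I\subseteq\n$) one gets $P^A_R(t)=1+t\,P^A_I(t)$; substituting $P^A_I(t)=P^A_R(t)\,P^R_I(t)$ yields
\[ P^A_R(t)=\frac{1}{1-t\,P^R_I(t)},\qquad\text{hence}\qquad P^A_N(t)=\frac{P^R_N(t)}{1-t\,P^R_I(t)} \]
for every $R$-module $N$. On the Koszul side, writing $K^B_M(t)=\sum_i\dim_k H_i(\mathrm{Kos}^B\otimes_B M)\,t^i$, the $\mu$ extra minimal generators of $\n$ generate $I$ and so act as $0$ on any $R$-module viewed through $p$, whence a Künneth computation gives $K^A_N(t)=(1+t)^\mu K^R_N(t)$; and computing the Koszul homology of $A$ from the splitting $A=j(R)\oplus I$ together with the same exact sequence should produce the denominator identity
\[ 1-t\bigl(K^A_A(t)-1\bigr)=(1+t)^\mu\Bigl(\bigl(1-t(K^R_R(t)-1)\bigr)-t\,K^R_I(t)\Bigr). \]

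With these in hand Part 2 follows. Put $D(t)=1-t(K^R_R(t)-1)$. By definition $N$ is Golod over $A$ iff $P^A_N(t)=K^A_N(t)\big/\bigl(1-t(K^A_A(t)-1)\bigr)$, which after substituting the identities above becomes $P^R_N(t)\big/(1-t\,P^R_I(t))=K^R_N(t)\big/(D(t)-t\,K^R_I(t))$. Setting $\varepsilon_N=K^R_N/D-P^R_N$ and $\varepsilon_I=K^R_I/D-P^R_I$, the Golod defects over $R$ (each a power series with nonnegative coefficients by Serre's inequality), and clearing denominators, this rearranges to
\[ \varepsilon_N(t)\,\bigl(D(t)-t\,K^R_I(t)\bigr)+t\,K^R_N(t)\,\varepsilon_I(t)=0; \]
since $(D(t)-t\,K^R_I(t))^{-1}=(1-t((K^R_R(t)-1)+K^R_I(t)))^{-1}$ has nonnegative coefficients and $K^R_N(0)=\dim_k N/\m N\neq 0$, one reads off first $\varepsilon_N=0$ and then $\varepsilon_I=0$, i.e.\ $N$ and $I$ are both Golod over $R$; the converse is the same algebra reversed (and the argument for $p'$ is identical). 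For Part 3 take $N=k$ (its associated $A$-module being $A/\n$): $A$ is Golod iff $R$ is Golod and $I$ is a Golod $R$-module. Finally, applying Part 2 to the module $N=I$ through $p'$ shows that if $I$ is a Golod $R$-module then the ideal $I$ is a nonzero Golod $A$-module, which forces $A$ — hence, by Theorem \ref{i1}, $R$ — to be Golod; so the two conditions collapse and $A$ is Golod iff $I$ is a Golod $R$-module. The step I expect to cost real effort is the denominator identity for $K^A_A(t)$: one must compute the Koszul homology of $A$ on a minimal generating set of $\n$ purely in terms of the Koszul homology of $R$ and of the $R$-module $I$, and since only $II'=0$ (not $I^2=0$) is assumed this cannot be reduced to the trivial-extension case but has to go through the sequence $0\to I\to A\to R\to 0$ and the multiplicative structure on $\mathrm{Kos}^A$; the other point requiring care beyond Theorem \ref{i1} is the implication ``$I$ a nonzero Golod $A$-module $\Rightarrow$ $A$ Golod'', where one invokes that a local ring admitting a nonzero Golod module is itself a Golod ring.
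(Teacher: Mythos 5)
Much of your groundwork is correct and even pleasantly streamlined: the explicit isomorphism $I\cong I'$, the observation that $II'=0$ makes the $A$-module structure on the ideal $I$ equal to the $R$-module $I$ restricted along $p'$, the identity $P^A_N(t)=P^R_N(t)/(1-tP^R_I(t))$ (this is Lemma \ref{m1}), and the K\"unneth identity $\kappa^A_N(t)=(1+t)^n\kappa^R_N(t)$ are all fine. The gap is the third ingredient, which your whole equivalence rests on: the ``denominator identity''
\[ 1-t(\kappa_A(t)-1)=(1+t)^{n}\bigl\{(1+t)-t(\kappa_R(t)+\kappa^R_I(t))\bigr\} \]
is \emph{not} an unconditional consequence of $II'=0$; it fails exactly when the products $\Ho_{\geq 1}(R)\cdot\Ho(I)$ do not vanish. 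Concretely, take $R=k[x,y]/(x^2,y^2)$, $M=R/(x)$ and $A=R\lJoin M\cong k[x,y,z]/(x^2,y^2,xz,z^2)$ (so $I=I'=M$, $n=\mu_R(M)=1$). Here $\kappa_R(t)=\kappa^R_M(t)=(1+t)^2$, so the identity would force $\kappa_A(t)=2(1+t)^3-(1+t)=1+5t+6t^2+2t^3$, whereas $\dim_k\Ho_1(A)=\mu(x^2,y^2,xz,z^2)=4$. (Consistently, $M$ is not a Golod $R$-module and $\Ho_1(R)\cdot\Ho_0(M)\neq 0$: the class of $y\,e_2$ survives in $\Ho_1(K^M)$.) What is true unconditionally is only an inequality: from $0\to K^A\otimes_AI\to K^A\to K^A\otimes_AR\to 0$ one gets $\kappa_A(t)\leq\kappa^R_I(t)(1+t)^n+\kappa_R(t)(1+t)^n-(1+t)\frac{(1+t)^n-1}{t}$, and even this needs an injectivity statement for the connecting map on the exterior part, which in turn requires choosing minimal generators $h_s$ of $I$ and $h'_s$ of $I'$ with $h_s-h'_s\in\m A$. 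Equality holds precisely because the rest of the connecting map is multiplication by classes in $\Ho(I)$ and is killed by Corollary \ref{p7} \emph{when $I$ is assumed Golod over $R$}. So your $\varepsilon_N,\varepsilon_I$ bookkeeping, which deduces both implications at once from three identities, has no valid starting point: the identity you need is essentially equivalent to the conclusion in one direction and simply unavailable in the other.

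The forward implication can be salvaged along your lines, and that is what the paper does: assume $N$ and $I$ Golod over $R$, establish the identity via the exact sequence, the compatible generators, the injectivity claim, and Corollary \ref{p7}, and then run the series computation. But the converse (``$N$ Golod over $A$ $\Rightarrow$ $I$ Golod over $R$'') cannot be pushed through the numerical identity at all, and the paper uses a genuinely different argument there: $N$ is Golod over $R$ by descent (Theorem \ref{p6}), $A$ and $R$ are Golod rings by Levin's Theorem \ref{levingm}, and then Golodness of $I$ is obtained by a Massey-product transfer --- given a defining system for $\langle v_1,\ldots,v_n\rangle$ with $v_1\in\Ho(I)$, one wedges the top row with $Y_1\wedge\cdots\wedge Y_n$, uses that $A$ is Golod to produce a bounding element in $K^A$, and uses the injectivity of $\Ho(I)\to\Ho(A)[n]$, $z\mapsto Y_1\wedge\cdots\wedge Y_n\wedge z$, to pull the conclusion back to $K^I$, finishing with Proposition \ref{p6.3}. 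You should either reproduce an argument of this kind for the converse or find a substitute; as written, the step you flagged as ``real effort'' is not a computation to be completed but a false statement in the generality you need.
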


As an application we prove the following theorem.
\begin{theorem}\label{i3}
Let $(R, \m)$ be a local ring and $M$ be an $R$-module. Let $A = R \lJoin M$ be the trivial extension of $R$ by $M$. Then $A$ is a Golod ring if and only if $M$ is a Golod $R$-module. 
\end{theorem}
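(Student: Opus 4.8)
The plan is to realize the trivial extension $A = R \lJoin M$ as an algebra retract over $R$ in two different ways so that Theorem \ref{i2} applies. Recall that $A = R \oplus M$ as an $R$-module, with multiplication $(r,m)(r',m') = (rr', rm' + r'm)$, so that $M \subseteq A$ is an ideal with $M^2 = 0$. The natural inclusion $j : R \to A$, $r \mapsto (r,0)$, is a ring map, and the projection $p : A \to R$, $(r,m) \mapsto r$, is a section of $j$; thus $j$ is an algebra retract with $\ker(p) = M$. Since the maximal ideal of $A$ is $\m \oplus M = j(\m) + M$ and $M \subseteq \m_A$, this is a local algebra retract in the sense of Theorem \ref{i2}.

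The key point is that we may take $p' = p$ (the theorem explicitly allows the two sections to coincide), so $I = I' = M$ and the hypothesis $II' = M \cdot M = M^2 = 0$ is exactly the condition that makes $R \lJoin M$ a trivial extension. First I would verify these elementary facts — that $j$ is a ring homomorphism, that $p$ is a section, that $M^2 = 0$, and that the $R$-module structure on $I = M$ induced via $j$ is precisely the original $R$-module structure on $M$ — none of which requires more than unwinding definitions. Then I would apply the last assertion of Theorem \ref{i2}: ``$A$ is a Golod ring if and only if $I$ (equivalently $I'$) is a Golod $R$-module,'' which with $I = M$ gives exactly the claim: $A = R \lJoin M$ is a Golod ring if and only if $M$ is a Golod $R$-module.

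I expect the main (and essentially only) obstacle to be a bookkeeping one: confirming that the module structures match up and that the local hypotheses of Theorem \ref{i2} are genuinely met — in particular that $(A, \m \oplus M)$ is local with residue field $k$, so that ``Golod ring'' for $A$ has its intended meaning, and that no separate Artinian or Noetherian hypothesis is needed beyond what is already assumed of $R$ and $M$. Once Theorem \ref{i2} is in hand this reduces to a direct specialization, so the proof is short; the content has been front-loaded into Theorem \ref{i2}.
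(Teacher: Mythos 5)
Your proposal is correct and is essentially identical to the paper's argument: the paper also takes $j(r)=(r,0)$, $p(r,m)=r$ with $p'=p$, notes $\ker(p)^2=M^2=0$, and invokes Theorem \ref{i2} (Theorem \ref{m2} in the body) to conclude. No gaps; the verification of the retract and module-structure bookkeeping is exactly the routine unwinding you describe.
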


The above result gives us an efficient method to study the Golod property of modules with results available to characterise Golod rings. We demonstrate this by giving a new characterisation of regularity of a ring in terms of the Golod property of its canonical module (Corollary \ref{ap4}). 

Next we turn our attention to study the Golod property of  fibre products of local rings. The fibre product $R_1 \times_S R_2$ of two local epimorphisms $\e_i : (R_i, \m_i, k) \twoheadrightarrow (S, \n, k), i = 1, 2$ of Noetherian local rings is also a Noetherian \cite[Theorem 2.1]{ogoma} local ring and shares the same residue field $k$. We  refer the reader to Definition \ref{fibre} for an explicit construction. 
In the recent article
\cite[Theorem 1.8]{ananth} the authors present criteria for the fibre product $R_1 \times_S R_2$ to be Cohen Macaulay, Gorenstein. Dress and Kr\"amer computed the Poincar\'e series of the residue field $k$ over  $R_1 \times_k R_2$ in \cite[Satz 1]{dress}. The result was reproved later in \cite[\S1]{moore} by presenting an explicit construction of the minimal free resolution of $k$ over $R_1 \times_k R_2$. The series was also computed for fibre products over arbitrary rings in \cite{her1}, \cite{les2} under some hypothesis. In \cite[Theorem 4.1]{les1}, Lescot showed that the fibre product $R_1 \times_k R_2$ is Golod if and only if both $R_1$ and $R_2$ are so. His result in particular shows that for a local ring $(R, \m, k)$, the fibre product $R \times_k R$ is Golod whenever $\m$ is a Golod $R$-module. We generalize this fact as follows as an application of Theorem \ref{i2}.

\begin{theorem}
Let $I$ be an ideal of a local ring $R$. Let for $n \geq 2$

\[A_n = \underbrace{R \times_{R/I} R \ldots \times_{R/I} R}_{n\ \text{times}}.\]
Then the following are equivalent.
\begin{enumerate}
\item $I$ is a Golod $R$-module.

\item The fibre product $A_n$ is a Golod ring for all $n \geq 2$.

\item The fibre product $A_n$ is a Golod ring for some $n \geq 2$.
\end{enumerate}
\end{theorem}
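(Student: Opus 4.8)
The plan is to realise $A_n$ inside a short tower of algebra retracts and push everything onto Theorem~\ref{i2}, by an induction on $n$ with a hypothesis stated at the level of pulled-back modules. Write $A_n\subseteq R^n$ for the subring of tuples $(t_1,\dots,t_n)$ whose entries are all congruent modulo $I$; it is local with maximal ideal $\n_n=A_n\cap\m^n$, the diagonal $j_n\colon R\to A_n$ is an algebra retract, and each coordinate projection $p_i\colon A_n\to R$ is a section of $j_n$. Permuting coordinates gives ring automorphisms of $A_n$ carrying $p_i$ to $p_{i'}$, and $k\cong p_i^{*}k$ as $A_n$-modules since $p_i^{-1}(\m)=\n_n$. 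Crucially, Theorem~\ref{i2} does \emph{not} apply to $j_n$ for $n\ge 3$: for $i\ne i'$ one computes $\ker p_i\cdot\ker p_{i'}\ne 0$, and this product vanishes only when $n=2$. So for $n\ge 3$ I use instead the algebra retract $\iota\colon A_{n-1}\hookrightarrow A_n$, $(s_1,\dots,s_{n-1})\mapsto(s_1,s_1,s_2,\dots,s_{n-1})$, duplicating the first coordinate; the two projections $q_1\colon(t_1,\dots,t_n)\mapsto(t_2,\dots,t_n)$ and $q_2\colon(t_1,\dots,t_n)\mapsto(t_1,t_3,\dots,t_n)$ are both sections of $\iota$, one checks $\ker q_1=\{(t_1,0,\dots,0):t_1\in I\}$ and $\ker q_2=\{(0,t_2,0,\dots,0):t_2\in I\}$ so that $\ker q_1\cdot\ker q_2=0$, both kernels are isomorphic as $A_{n-1}$-modules (via $\iota$) to $\pi_1^{*}I$ where $\pi_1\colon A_{n-1}\to R$ is the first projection, and $\pi_1\circ q_1=p_2$. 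Hence Theorem~\ref{i2} applies to $\iota$ with the pair $q_1,q_2$.

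Since the middle part of Theorem~\ref{i2} applied to $\iota$ involves the module $\ker q_1=\pi_1^{*}I$, and this in turn cannot be analysed through the retract $R\to A_{n-1}$ (again $\ker\cdot\ker\ne 0$ there for $n-1\ge 3$), the induction must be carried out on pulled-back modules. So I prove, by induction on $n\ge 2$, the statement $P(n)$: \emph{for every finitely generated $R$-module $N$ and every $i$, the module $p_i^{*}N$ is a Golod $A_n$-module if and only if $N$ is a Golod $R$-module and $I$ is a Golod $R$-module.} The base case $P(2)$ is exactly Theorem~\ref{i2} applied to $j_2\colon R\to A_2$ with sections $p_1,p_2$, noting $\ker p_1\cong\ker p_2\cong I$ and $\ker p_1\cdot\ker p_2=0$.

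For the inductive step ($n\ge 3$), apply the middle part of Theorem~\ref{i2} to $\iota$ with sections $q_1,q_2$ and the $A_{n-1}$-module $N'=\pi_1^{*}N$. Because $q_1^{*}N'=(\pi_1\circ q_1)^{*}N=p_2^{*}N$, this gives: $p_2^{*}N$ is Golod over $A_n$ iff $\pi_1^{*}N$ and $\pi_1^{*}I$ are both Golod over $A_{n-1}$. Now invoke $P(n-1)$ twice — once with the module $N$, to rewrite ``$\pi_1^{*}N$ Golod over $A_{n-1}$'' as ``$N$ and $I$ Golod over $R$'', and once with the module $I$, to rewrite ``$\pi_1^{*}I$ Golod over $A_{n-1}$'' as ``$I$ Golod over $R$''. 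Conjoining the two, $p_2^{*}N$ is Golod over $A_n$ iff $N$ and $I$ are Golod over $R$; by the coordinate symmetry the same holds for every $p_i$, which is $P(n)$.

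To conclude, take $N=k$ in $P(n)$ and use $k\cong p_i^{*}k$ over $A_n$: we obtain that $A_n$ is a Golod ring if and only if $R$ is Golod and $I$ is a Golod $R$-module. On the other hand, comparing $P(2)$ for $N=k$ with the final (``In particular'') clause of Theorem~\ref{i2} in the case $A=A_2$ — which says $A_2$ is Golod iff $I$ is a Golod $R$-module — shows that ``$I$ a Golod $R$-module'' already forces ``$R$ Golod''. Hence, for every $n\ge 2$, $A_n$ is Golod if and only if $I$ is a Golod $R$-module, and the equivalence of (1), (2), (3) is immediate: (1) makes every $A_n$ Golod, (2)$\Rightarrow$(3) is trivial, and (3) for a single $n\ge 2$ returns (1). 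I expect the only genuinely delicate point to be the observation that the obvious retract $R\to A_n$ is useless for $n\ge 3$ while the coordinate-duplication retract $A_{n-1}\hookrightarrow A_n$ restores the hypothesis $II'=0$ of Theorem~\ref{i2}, and the resulting need to strengthen the induction from rings to all pulled-back modules; the computations of the $\ker q_i$, their $A_{n-1}$-module structures, and the relations $\pi_1\circ q_1=p_2$, $\ker q_1\cdot\ker q_2=0$, $p_i^{*}k\cong k$ are all routine.
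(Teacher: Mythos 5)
Your argument is correct: the duplication retract $\iota\colon A_{n-1}\hookrightarrow A_n$ with the two ``delete one of the duplicated coordinates'' sections $q_1,q_2$ does satisfy $\ker q_1\cdot\ker q_2=0$, both kernels are $\pi_1^{*}I$ over $A_{n-1}$, and $\pi_1\circ q_1=p_2$, so Theorem \ref{i2} applies at each step; the strengthened induction hypothesis on pulled-back modules $p_i^{*}N$ closes correctly, and your observation that the diagonal retract $R\to A_n$ is useless for $n\ge 3$ is accurate. The route, however, differs from the paper's. The paper first isolates the case $n=2$ as Lemma \ref{ap2} (adding the equivalence with ``$I\oplus I$ is a Golod $A_2$-module''), and then inducts on powers of two via the identity $A_{2m}=A_m\times_{A_m/J_m}A_m$ with $J_m=I^{\oplus m}$, tracking Golodness of the single ideal $J_{2^n}$; to recover arbitrary $n$ it uses that each $A_m$ is an algebra retract of $A_{2^n}$ together with descent of the Golod property along retracts (Theorem \ref{p6}), and it converts ``$J_{2^n}$ Golod'' into ``$A_{2^n}$ Golod'' by Levin's Theorem \ref{levingm}; the implication $(3)\Rightarrow(1)$ again goes through the retract $A_2\subset A_n$ and descent. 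Your one-coordinate-at-a-time induction avoids both the interpolation/descent step and the auxiliary ideal $I^{\oplus m}$, works directly for every $n$, and as a byproduct proves the stronger module-level statement that $p_i^{*}N$ is Golod over $A_n$ exactly when $N$ and $I$ are Golod over $R$ (which recovers Lescot-type consequences beyond the ring case); the price is the heavier bookkeeping with pulled-back modules and the permutation-symmetry (or, alternatively, running the argument with $\pi_j^{*}N$ for all $j$). Two cosmetic points: your deduction that ``$I$ Golod forces $R$ Golod'' by comparing $P(2)$ with the ``in particular'' clause is legitimate but could simply cite Theorem \ref{levingm}, and Noetherianity of the iterated fibre products (needed to stay within the paper's standing hypotheses) follows inductively from $A_n\cong A_{n-1}\times_{R/I}R$ via Ogoma, as in the paper.
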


The notion of large homomorphisms was introduced by Levin in \cite{lev4}. A ring homomorphism is large if it induces a surjection on Tor algebras (Definition \ref{ap5}).  For example if $R \hookrightarrow A \twoheadrightarrow R$ is an algebra retract, then the surjection $A \twoheadrightarrow R$ is a large homomorphism. Our final result in this article strengthens Theorem \ref{i1} as follows:

\begin{theorem}
Let $f : (R, \m) \twoheadrightarrow (S, \n)$ be a large homomorphism of local rings and $M$ be an $S$-module. Then $M$ is a Golod $S$-module if $M$ is so as an $R$-module.
\end{theorem}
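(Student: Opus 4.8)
The plan is to deduce the theorem from its special case $M=k$ --- equivalently, from the assertion that a surjective large homomorphism carries Golod rings to Golod rings --- by passing to trivial extensions and applying Theorem \ref{i3}. Set $\mathfrak{a}=\ker f$; since $M$ is an $S$-module we have $\mathfrak{a}M=0$, so $M$ acquires its $R$-module structure through $f$, the subset $\mathfrak{a}\oplus 0$ is an ideal of $R\lJoin M$ (indeed $(r,m)(a,0)=(ra,am)=(ra,0)$ for $a\in\mathfrak{a}$), and $(R\lJoin M)/(\mathfrak{a}\oplus 0)\cong S\lJoin M$. Thus $f$ induces a surjective local homomorphism $\widetilde f\colon R\lJoin M\twoheadrightarrow S\lJoin M$, and by Theorem \ref{i3} the hypothesis ``$M$ Golod over $R$'' and the conclusion ``$M$ Golod over $S$'' translate into ``$R\lJoin M$ is Golod'' and ``$S\lJoin M$ is Golod''. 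Hence it suffices to prove: (a) $\widetilde f$ is large; and (b) a large homomorphic image of a Golod ring is Golod. (Taking $f$ to be the section $p$ of an algebra retract recovers Theorem \ref{i1}.)

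Step (a) is a finite computation with rational functions and carries no real obstruction. By Levin's characterisation \cite{lev4}, a surjective local homomorphism $\varphi\colon B\twoheadrightarrow C$ is large iff $P^B_k(t)=P^C_k(t)\,P^B_C(t)$, in which case $P^B_N(t)=P^C_N(t)\,P^B_C(t)$ for every finitely generated $C$-module $N$. Since $f$ is large, $P^R_k=P^S_k\,P^R_S$ and $P^R_M=P^S_M\,P^R_S$. Using the classical trivial-extension formula $P^{T\lJoin N}_k(t)=P^T_k(t)/(1-tP^T_N(t))$, the largeness of the canonical retraction $R\lJoin M\twoheadrightarrow R$, the exact sequence $0\to\mathfrak{a}\oplus 0\to R\lJoin M\to S\lJoin M\to 0$ together with $\mathfrak{a}\oplus 0\cong\mathfrak{a}$ as an $R\lJoin M$-module and $P^R_{\mathfrak{a}}(t)=(P^R_S(t)-1)/t$, one obtains $P^{R\lJoin M}_{S\lJoin M}(t)=(P^R_S(t)-tP^R_M(t))/(1-tP^R_M(t))$; substituting these expressions into $P^{S\lJoin M}_k(t)\,P^{R\lJoin M}_{S\lJoin M}(t)$ and simplifying recovers $P^R_k(t)/(1-tP^R_M(t))=P^{R\lJoin M}_k(t)$, so $\widetilde f$ is large.

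The substance of the proof is step (b), and this is where I expect the main difficulty to lie. Let $\varphi\colon B\twoheadrightarrow C$ be large with $B$ Golod. After a standard reduction one may assume $\operatorname{edim}B=\operatorname{edim}C=:e$ and hence $K^C\cong K^B\otimes_B C$; combining $P^B_k=P^C_k\,P^B_C$ with the Golod equality $P^B_k(t)\bigl(1-\sum_{i\ge 1}\dim_k\Ho_i(K^B)\,t^{i+1}\bigr)=(1+t)^e$ shows that ``$C$ is Golod'' is equivalent to the single identity
\[
\textstyle 1-\sum_{i\ge 1}\dim_k\Ho_i(K^C)\,t^{i+1}=\bigl(1-\sum_{i\ge 1}\dim_k\Ho_i(K^B)\,t^{i+1}\bigr)\,P^B_C(t).
\]
Proving this is the crux, and Poincaré-series bookkeeping alone will not do it: one needs to know how the Koszul homology algebra of $C$ --- equivalently the homotopy Lie algebra $\pi^{\ge 2}(C)$, which is a quotient of the \emph{free} graded Lie algebra $\pi^{\ge 2}(B)$ via the surjection $\pi^{\ge 1}(B)\twoheadrightarrow\pi^{\ge 1}(C)$ afforded by largeness --- is governed by the ``fibre'' data $\Tor^B_*(C,k)$. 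My intended route is through Levin's structural analysis of large homomorphisms (the comparison of acyclic closures of $k$ over $B$ and over $C$): one shows that the kernel of $\pi^{\ge 2}(B)\to\pi^{\ge 2}(C)$ is a free factor of $\pi^{\ge 2}(B)$, whence $\pi^{\ge 2}(C)$ is free and $C$ is Golod. Concretely, this amounts to transporting a trivial Massey operation on the Koszul homology of $B$ to one on the Koszul homology of $C$ --- equivalently, verifying that the Golod (Eagon) resolution over $B$ remains minimal after base change along $\varphi$ --- which is the same DG-algebraic mechanism behind Theorem \ref{i1}, but now requiring only the largeness of the map and not the existence of a retract splitting. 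Controlling that kernel is the step at which the argument must do its real work.
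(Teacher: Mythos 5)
Your overall strategy is the same as the paper's: reduce to trivial extensions via Theorem \ref{i3} (Theorem \ref{m2}), show the induced surjection $R\lJoin M\twoheadrightarrow S\lJoin M$ is large, and then invoke ``a large homomorphic image of a Golod ring is Golod.'' Your step (a) is a correct alternative to the paper's Lemma \ref{ap6}: where the paper argues by induction on homological degree using the decompositions $\Tor^{T\lJoin M}(N,k)\cong\Tor^{T\lJoin M}(T,k)\otimes_k\Tor^T(N,k)$ and a long exact sequence, you verify the numerical criterion $P^{R\lJoin M}_k=P^{S\lJoin M}_k\,P^{R\lJoin M}_{S\lJoin M}$; that criterion does imply largeness (coefficientwise equality forces the change-of-rings spectral sequence to degenerate, so the edge map $\Tor^{R\lJoin M}(k,k)\to\Tor^{S\lJoin M}(k,k)$ is onto), and your computation of $P^{R\lJoin M}_{S\lJoin M}$ is correct.

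The genuine gap is step (b), which you yourself identify as the crux and leave unproved, and the route you sketch for it rests on a map that does not exist. Largeness of $\varphi\colon B\twoheadrightarrow C$ gives an \emph{injection} $\pi(C)\hookrightarrow\pi(B)$ of homotopy Lie algebras (dual to the surjection $\Tor^B(k,k)\twoheadrightarrow\Tor^C(k,k)$); it does not provide a surjection $\pi^{\ge 1}(B)\twoheadrightarrow\pi^{\ge 1}(C)$, since the functoriality of $\pi$ runs contravariantly, and there is no natural map $\pi^{\ge 2}(B)\to\pi^{\ge 2}(C)$ whose kernel one could try to exhibit as a free factor. So the statement you propose to prove cannot even be set up as stated, and the ``real work'' you defer is pointed in the wrong direction. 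The repair is much shorter than you anticipate and is exactly the paper's Lemma \ref{ap7}: since $B=R\lJoin M$ is Golod, $\pi^{\ge 2}(B)$ is a free graded Lie algebra (Theorem \ref{p9}); largeness embeds $\pi^{\ge 2}(C)$ into $\pi^{\ge 2}(B)$ as a graded sub Lie algebra; graded sub Lie algebras of free graded Lie algebras are free (Proposition \ref{p8}); hence $\pi^{\ge 2}(C)$ is free and $C=S\lJoin M$ is Golod by Theorem \ref{p9}. With that lemma supplied, your reduction and step (a) do complete the proof.
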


Now we briefly describe the layout of the article. In section 2 we collect some definitions, notations, and known results which we will use. In section 3 we characterise Golod modules in terms of vanishing of Massey products on Koszul homologies. As an offshoot we prove a criterion for a graded module over a graded affine algebra of characteristic zero to be a Golod module  generalising a result of Herzog and Huneke (Theorem \ref{huneke}).
In section 4 we discuss how the Golod property transfers along algebra retracts. We prove Theorems \ref{i1}, \ref{i2} in this section. In the final section we prove rest of the results as an application of Theorem \ref{i2}.

{\bf
All rings in this article are Noetherian local rings with $1 \not= 0$. All modules are nonzero and finitely generated. All homomorphisms of local rings are local, i.e. they send non-units to non-units. Throughout this article, the notation $(R, \m, k)$ denotes a local ring $R$ with maximal ideal $\m$ and residue field $k$.}

\section{Preliminaries}
We begin by reminding the reader of a few elementary definitions without specific reference. Most terms and concepts involved can be found in \cite{av}. 
Let $(R, \m, k)$ be a local ring.

\begin{definition}{\rm(Poincar\'e Series)}\label{poin}
The Poincar\'e series of an $R$-module $M$ is defined as $$P^R_M(t) = \sum_{i \geq 0} \dim_k {\Tor}^R_i(k, M)t^i \in \ZZ[|t|].$$
\end{definition}

We denote Koszul complexes of $R$ and $M$ on a minimal set of generators of $\m$ by $K^R$ and $K^M$ respectively. For notational simplicity we abbreviate  $\Ho_i(K^R)$ and $\Ho_i(K^M)$ as $\Ho_i(R)$ and $\Ho_i(M)$ respectively.

\begin{definition}{\rm(Koszul polynomials)}
The Koszul polynomial of an $R$-module $M$ is defined as $\kappa^R_M(t) = \sum_{i \geq 0} \dim_k \Ho_i(M)t^i$. When there is no chance of confusion, we denote the Koszul polynomial simply by $\kappa_M(t)$ dropping the superscript $R$. Likewise the Koszul polynomial of $R$ is $\kappa_R(t) = \sum_{i \geq 0} \dim_k \Ho_i(R)t^i$.
\end{definition}

The following result in \cite[Corollary 2]{her2} gives an explicit basis of Koszul homologies.

\begin{theorem}\label{pher}
Let $S = k[X_1, \ldots, X_n]$, $\deg(X_i) = a_i > 0$ be a graded polynomial ring over a field $k$ of characteristic zero. Let $I$ be a homogeneous ideal different from $S$. Suppose a graded minimal free resolution of $S/I$ over $S$ is given by
\[0 \rightarrow F_p \xrightarrow{\phi_p} F_{p-1} \xrightarrow{\phi_{p-1}} \ldots \rightarrow F_1 \xrightarrow{\phi_1} F_0 \rightarrow S/I \rightarrow 0.\] 

Let $F_i$ be a free module on a homogeneous set of basis $\{f_{i1}, f_{i2}, \ldots, f_{i b_i}\}$ and $\phi_i(f_{ij}) = \sum_{k = 1}^{b_{i -1}}\alpha^{(i)}_{j k} f_{i - 1 k}$. Then for all $l = 1, 2, \ldots, p$, the elements

\[\sum_{1 \leq i_1 < i_2 < \ldots < i_l \leq n} a_{i_1}a_{i_2} \ldots a_{i_l} \sum_{j_2 = 1}^{b_{l-1}} \ldots \sum_{j_l = 1}^{b_1}  c_{{j_1}, \ldots, j_l} \overline{\frac{\partial( \alpha^{(l)}_{j_1, j_2}, \alpha^{(l - 1)}_{j_2, j_3}, \ldots, \alpha^{(1)}_{j_l,1})}{\partial(X_{i_1}, X_{i_2}, \ldots, X_{i_l})}}e_{i_1} \wedge e_{i_2} \wedge \ldots \wedge e_{i_l}; j_1 = 1, \ldots, b_l \]
are cycles of $K^R_l$ whose homology classes form a $k$-basis of $\Ho_l(R)$. Here $ c_{{j_1}, \ldots, j_l}$ are rational numbers determined by the degrees of the $\alpha_{j k}^{(i)}$ and the elements $e_{i_1} \wedge e_{i_2} \wedge \ldots \wedge e_{i_l}$ form an $R$-basis of the free module $K^R_l = \wedge ^l(\oplus_{i = 1}^n Re_l)$. The overline denote the class of the Jacobian modulo $I$.
\end{theorem}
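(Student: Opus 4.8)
The plan is to identify $\Ho_\bullet(R)$ with $\Tor^S_\bullet(k, S/I)$ and then make the comparison of its two standard computations completely explicit. Since $X_1, \dots, X_n$ minimally generate $\m_S$, the Koszul complex $K^S$ on $X_1, \dots, X_n$ is the graded minimal free resolution of $k$ over $S$; hence $K^R = K^S \otimes_S R$ computes $\Tor^S_\bullet(k, S/I)$ (here one uses $I \subseteq \m^2$, so that the images of the $X_i$ minimally generate $\m_R$ and the Koszul complex on the $n$ variables $e_i$ of the statement is indeed $K^R$), whence $\Ho_l(R) = \Ho_l(K^S \otimes_S S/I)$. On the other hand, tensoring $F_\bullet$ with $k$ and using minimality of the resolution, $\Tor^S_l(k, S/I) = F_l \otimes_S k$ with $k$-basis the residue classes $\overline{f_{l1}}, \dots, \overline{f_{l b_l}}$. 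So the theorem amounts to computing, for the resulting isomorphism $F_l \otimes_S k \xrightarrow{\ \sim\ } \Ho_l(R)$, the explicit cycle of $K^R_l$ representing the image of $\overline{f_{l j_1}}$.

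To realise that isomorphism I would use the first-quadrant double complex $C_{p,q} = K^S_p \otimes_S F_q$, with horizontal differential induced by $d^{K^S}$ and vertical differential (with the Koszul sign) induced by $\phi_\bullet$. Tensoring the quasi-isomorphism $F_\bullet \to S/I$ by the flat modules $K^S_p$ gives a quasi-isomorphism $\mathrm{Tot}(C) \to K^S \otimes_S S/I = K^R$, namely the augmentation that annihilates everything of positive $F$-degree; it is a chain map precisely because $\operatorname{im}\phi_1 = I$. Likewise, tensoring $K^S \to k$ by the flat $F_q$ gives a quasi-isomorphism $\mathrm{Tot}(C) \to F_\bullet \otimes_S k$ onto a complex with zero differential. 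The class corresponding to $\overline{f_{l j_1}}$ is then produced by the usual zig-zag: begin with $f_{l j_1} \in C_{0,l}$ and lift successively along the horizontal differential. This is possible because $\phi_l(f_{l j_1})$, and each element encountered afterwards, is a horizontal cycle --- consecutive $\phi$'s compose to zero --- lying in positive Koszul degree, where $K^S$ is acyclic, and because minimality puts the entries $\alpha^{(l)}_{j_1 j_2}$ into $\m$. After $l$ steps one lands in $C_{l,0} = K^S_l$, whose reduction modulo $I$ is the desired cycle in $K^R_l$.

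To extract the closed formula I would make every lift canonical. The choices are: (i) writing a homogeneous $g \in \m$ of degree $d$ as $\sum_i \beta_i X_i$, for which in characteristic zero the weighted Euler identity $\sum_i a_i X_i\, \partial g/\partial X_i = d\, g$ forces $\beta_i = (a_i/d)\, \partial g/\partial X_i$; and (ii), at later stages, writing a Koszul cycle $c \in K^S_p$ of internal degree $w$ (with the convention $\deg e_i = a_i$) as $d^{K^S}(\eta)$, for which I would introduce the weighted co-differential $D_w \colon K^S_p \to K^S_{p+1}$, $D_w(g\, e_\sigma) = \sum_i a_i (\partial g/\partial X_i)\, e_i \wedge e_\sigma$; a direct check yields $d^{K^S} D_w + D_w d^{K^S} = (\text{multiplication by internal degree})$, so $\eta = w^{-1} D_w(c)$ is the canonical primitive. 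Running the zig-zag with these primitives, the $s$-th step contributes the index $i_s$, a factor $a_{i_s}$, a derivative $\partial/\partial X_{i_s}$, and a denominator $\deg f_{l j_1} - \deg f_{l-s, j_{s+1}}$ (strictly positive on the terms that do not vanish); collecting the coefficient of a fixed ordered wedge $e_{i_1} \wedge \dots \wedge e_{i_l}$ over all orders in which the indices could have entered, every term in which $\partial/\partial X_{i_s}$ falls on an $\alpha^{(\cdot)}$ other than the freshest one is symmetric in two of the $i_t$ and dies against the exterior product, while the surviving terms are exactly the Laplace expansion of $\partial(\alpha^{(l)}_{j_1 j_2}, \dots, \alpha^{(1)}_{j_l,1})/\partial(X_{i_1}, \dots, X_{i_l})$. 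The accumulated $a_{i_t}$'s give $a_{i_1}\cdots a_{i_l}$, the accumulated denominators give the rational constants $c_{j_1,\dots,j_l}$, and reducing modulo $I$ (using $\alpha^{(1)}_{j_l,1} \in I$ for the final cycle condition) returns precisely the displayed expression. Since both augmentations are quasi-isomorphisms, the $b_l$ classes obtained as $j_1$ ranges over $1, \dots, b_l$ form a $k$-basis of $\Ho_l(R)$.

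The main obstacle is the inductive bookkeeping of the third paragraph: showing that nesting the canonical primitives really collapses, after the antisymmetrisation forced by the exterior powers, to the \emph{full} Jacobian determinant and not merely to some partial alternating sum of minors --- this is the point where the ``symmetric terms vanish against the wedge'' phenomenon must be verified at each stage --- together with controlling the Koszul signs in $\mathrm{Tot}(C)$ and the degree accounting that keeps all Euler and $D_w$ normalisations legitimate, every relevant homogeneous piece having strictly positive internal degree. Once this is in place the basis statement is immediate from the quasi-isomorphisms above.
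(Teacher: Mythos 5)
The paper does not prove Theorem \ref{pher} at all: it is imported verbatim from Herzog's paper \cite[Corollary 2]{her2} and used as a black box (only in the proof of Theorem \ref{huneke}), so there is no internal argument to compare yours against. That said, your strategy is essentially the one behind the cited source: identify $\Ho_l(R)$ with $\Tor^S_l(k,S/I)$, compute this Tor both from $K^S\otimes_S S/I$ and from $F_\bullet\otimes_S k$ via the double complex $K^S\otimes_S F_\bullet$, and use characteristic zero --- the weighted Euler identity and the co-differential $D_w$ --- to make every lift in the zig-zag canonical, so that the comparison isomorphism can be written in closed form. The framing is sound: minimality of $F_\bullet$ gives the basis $\overline{f_{l1}},\ldots,\overline{f_{lb_l}}$ of $\Tor^S_l(k,S/I)$, both augmentations of the total complex are quasi-isomorphisms, and the homotopy identity $d^{K^S}D_w+D_wd^{K^S}=(\text{internal degree})$ is correct and is exactly what characteristic zero buys.

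However, as a proof the proposal is incomplete at precisely the point that constitutes the content of the theorem, and you say so yourself: the claim that iterating the canonical primitives and antisymmetrising over the order in which the indices $i_1,\ldots,i_l$ enter collapses to the \emph{full} Jacobian determinant $\partial(\alpha^{(l)}_{j_1,j_2},\ldots,\alpha^{(1)}_{j_l,1})/\partial(X_{i_1},\ldots,X_{i_l})$, with the rational constants $c_{j_1,\ldots,j_l}$ coming out as stated, is asserted but not verified. The assertion that every term in which a derivative hits an ``older'' entry $\alpha^{(\cdot)}$ cancels against the wedge is not obvious --- after one step the coefficients are no longer single entries but sums of products of entries and their derivatives, so the cancellation has to be organised inductively, and the internal-degree denominators (which must be shown nonzero term by term, not just ``strictly positive on the terms that do not vanish'') have to be tracked to see that they depend only on $j_1,\ldots,j_l$ and assemble into a single constant $c_{j_1,\ldots,j_l}$ multiplying a genuine determinant rather than a partial alternating sum of products. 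Until that bookkeeping is carried out, you have shown that \emph{some} canonical cycles obtained from the zig-zag represent a $k$-basis of $\Ho_l(R)$, but not that they are the displayed Jacobian expressions; the basis statement and the formula are proved together or not at all. Since the paper simply cites \cite{her2}, the practical remedy is either to do that induction in full or to do what the paper does and quote Herzog.
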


\begin{definition}\label{p3}{\rm \cite{lev1} (Golod homomorphisms, Golod modules and Golod rings)}

Let $\phi: (R, \m, k) \rightarrow (S, \n, k)$ be a homomorphism of local rings. Given an $S$-module $M$, it carries the usual structure of an R-module via $\phi$. Let $\ITor^R(S, k)$ be the kernel of the augmentation map $\Tor^R(S, k) \twoheadrightarrow k$.
From the standard change of rings spectral sequence 
$\Tor^S_p(M, \Tor^R_q(S, k)) \underset{p}{\Rightarrow} \Tor^R_{p + q}(M, k)$ 
for an $S$-module $M$, one has a term-wise inequality of Poinca\'re series 
\[P^S_M(t) \leq \frac{P^R_M(t)}{1 - t(P^R_S(t) - 1)}.\]
If equality holds and $\n \ITor^R(S, k) = 0$, then $M$ is called a $\phi$-Golod module and if the residue field $k$ is $\phi$-Golod, then one says that $\phi$ is a Golod homomorphism.

Now assume that $M$ is an $R$-module. For a local ring $(R,m,k)$, there is a regular local ring $(Q, \mathfrak{l}, k)$ and a surjection $\phi : Q  \twoheadrightarrow \hat{R}$ such that $edim(R) = edim(Q)$. In this context the above inequality becomes

\[P^R_M(t) = P^{\hat{R}}_{\hat{M}}(t) \leq \frac{P^Q_{\hat{M}}(t)}{1 - t(P^Q_{\hat{R}}(t) - 1)} = \frac{\kappa_M(t)}{1 - t(\kappa_R(t) - 1)}\]

If equality holds, then $M$ is called a Golod $R$-module. The ring $R$ is said to be a Golod ring if the residue field $k$ is a Golod $R$-module. 
\end{definition}

The following result is implicit in \cite[Theorem 1.1]{lev1}. The proof follows verbatim as that of \cite[Theorem 1.3, (ii)]{lev3}.
\begin{theorem}\label{p4}
Let $\phi: (R, \m, k) \twoheadrightarrow (S, \n, k)$ be a surjective homomorphism of local rings and $M$ be an $S$-module. Then $M$ is a $\phi$-Golod module if and only if in the change of ring spectral sequence $\Tor^S_p(M, \Tor^R_q(S, k)) \underset{p}{\Rightarrow} \Tor^R_{p + q}(M, k)$, the following hold.
\begin{enumerate}
\item
$d^r_{pq} = 0$ for all $r \geq 2, q > 0$
\item
$E^{\infty}_{pq} = 0$ for all $q >0$
\end{enumerate}
\end{theorem}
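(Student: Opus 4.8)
The plan is to reduce the $\phi$-Golod condition to the bare equality of Poincar\'e series, then re-derive the underlying Serre-type inequality from the given spectral sequence explicitly enough to see precisely when it is sharp, and finally match the sharpness conditions with (1) and (2). For the reduction, note that since $\phi$ is surjective $\Tor^R_0(S,k)=S\otimes_R k=k$, so $\ITor^R(S,k)=\bigoplus_{q\ge1}\Tor^R_q(S,k)$; each $\Tor^R_q(S,k)$ is computed from a complex of $k$-vector spaces, hence is annihilated by $\m$ and therefore by $\n$. Thus the hypothesis $\n\,\ITor^R(S,k)=0$ of Definition \ref{p3} holds automatically, and $M$ is $\phi$-Golod exactly when
\[
P^S_M(t)=\frac{P^R_M(t)}{1-t\big(P^R_S(t)-1\big)}.
\]

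For the bookkeeping, write $E^2_{pq}=\Tor^S_p(M,\Tor^R_q(S,k))$ and put $\beta^S_p(M)=\dim_k\Tor^S_p(M,k)$, $\beta^R_q(S)=\dim_k\Tor^R_q(S,k)$. A minimal $R$-free resolution of $S$ shows $\Tor^R_q(S,k)$ is a $k$-vector space, so $E^2_{pq}\cong\Tor^S_p(M,k)\otimes_k\Tor^R_q(S,k)$ and $\dim_k E^2_{pq}=\beta^S_p(M)\,\beta^R_q(S)$ with $\beta^R_0(S)=1$. The bottom row $E^2_{p0}=\Tor^S_p(M,k)$ receives no differentials, since an incoming $d^r$ would emanate from second degree $1-r<0$; hence $E^{r+1}_{p0}=\ker d^r_{p0}\subseteq E^r_{p0}$ and telescoping gives $\beta^S_p(M)=\dim_k E^\infty_{p0}+\sum_{r\ge2}\operatorname{rank}d^r_{p0}$. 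Bounding $\operatorname{rank}d^r_{p0}\le\dim_k E^r_{p-r,r-1}\le\dim_k E^2_{p-r,r-1}=\beta^S_{p-r}(M)\beta^R_{r-1}(S)$ and $\dim_k E^\infty_{p0}\le\dim_k\Tor^R_p(M,k)=\beta^R_p(M)$ (the left side is a graded piece of the filtration on $\Tor^R_p(M,k)$), then multiplying by $t^p$ and summing reproduces the inequality. It becomes an equality of formal power series precisely when all these estimates are equalities for every $p$, i.e. when: (a) $\dim_k E^\infty_{p0}=\beta^R_p(M)$ for all $p$, equivalently $E^\infty_{pq}=0$ for all $q>0$; (b) every $d^r_{p0}$ maps onto $E^r_{p-r,r-1}$; and (c) $E^r_{p-r,r-1}=E^2_{p-r,r-1}$ for all relevant $r,p$.

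It remains to identify (a)+(b)+(c) with (1)+(2). If (1) and (2) hold, then (1) kills every differential out of a spot with $q>0$, so the only differentials are the $d^r_{p0}$ out of the bottom row; one checks that each spot $(p,q)$ with $q>0$ is undisturbed until the single page $r=q+1$ on which the bottom row can reach it --- this is (c) --- that $E^\infty_{pq}$ is then the cokernel of that one differential, and that (2) forces this cokernel to vanish, which is (b); and (a) is just (2). Conversely, (c) says that no differential on pages $2,\dots,q$ touches $(p,q)$, in particular that none lands there; letting $(p,q)$ range over all spots with $q>0$, and using that $d^r_{pq}$ is automatically zero once $p-r<0$, this yields $d^r_{pq}=0$ for all $r\ge2$ and $q>0$, i.e. (1), while (a) again gives (2).

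I expect the main obstacle to be precisely this last translation. Conditions (1) and (2) as stated look weaker than ``the bottom row is the only source of differentials and all of them are surjective'', and reconciling them with the coefficient-wise equality takes some care with the shift $(p,q)\mapsto(p-r,q+r-1)$ --- notably identifying, for each spot in a row $q>0$, the unique page on which a differential can hit it. The other ingredients --- the reduction via $\n\,\ITor^R(S,k)=0$, the tensor decomposition of $E^2$, and the telescoping estimate recovering Serre's bound --- are routine. Alternatively, one may simply transcribe the cited argument of Levin for the residue field, with $M$ in place of $k$.
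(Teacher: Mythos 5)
Your proof is correct and is essentially the argument the paper itself points to: the paper offers no proof of its own, deferring verbatim to Levin's rank-counting argument in the change-of-rings spectral sequence for $M=k$, and your telescoping estimate $\beta^S_p(M)=\dim_k E^\infty_{p0}+\sum_{r\ge 2}\operatorname{rank} d^r_{p0}$ together with the observation that $\n\,\ITor^R(S,k)=0$ is automatic for surjective $\phi$ is exactly that argument carried out for a general module $M$. The one compressed step in your converse --- that a differential leaving a row $q>0$ on page $r$ lands in row $q+r-1\ge r$ (since $q\ge 1$) and is therefore one of the differentials your condition (c) kills at its target spot --- is a one-line check, so there is no gap.
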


\begin{remark}\label{p4s}
Let $(R, \m)$ be a local ring and $\phi : Q \rightarrow \hat{R}$ be a minimal Cohen presentation of $R$. Let $M$ be an $R$-module.  We have $ \Tor^Q_i(\hat{R}, k) = \Ho_i(R)$ and   $\Tor^Q_i(\hat{M}, k) = \Ho_i(M)$. The module $\hat{M}$ is a $\phi$-Golod module if and only $M$ is a Golod $R$-module. The change of ring spectral sequence reduces to the spectral sequence $\Tor^R_p(M, \Ho_q(R)) \underset{p}{\Rightarrow} \Ho_{p + q}(M)$. Therefore by Theorem \ref{p4}, the module $M$ is a Golod $R$-module if and only the spectral sequence  $\Tor^R_p(M, \Ho_q(R)) \underset{p}{\Rightarrow} \Ho_{p + q}(M)$ satisfies conditions 1 and 2 of Theorem \ref{p4}.
\end{remark}

The following theorem of Levin implies that if a ring admits a Golod module, then the ring must be a Golod ring.

\begin{theorem}{\rm \cite[Theorem 1.1]{lev1}}\label{levingm}
Let $\phi: (R, \m, k) \rightarrow (S, \n, k)$ be a local homomorphism and $M$ an $S$-module. Then the following are equivalent. 
\begin{enumerate}
\item
The $S$-module $M$ is $\phi$-Golod.

\item
The homomorphism $\phi$ is Golod  and the induced map $\phi_* : \Tor^R(M, k) \rightarrow \Tor^S(M, k)$ is injective.
\end{enumerate}
\end{theorem}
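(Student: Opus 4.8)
The plan is to read the equivalence off the change-of-rings spectral sequence, using Theorem \ref{p4} and Remark \ref{p4s} for the dictionary. I will argue for a surjection $\phi\colon R\twoheadrightarrow S$ — the only case that is actually used later (large homomorphisms, and the surjections $A\twoheadrightarrow R$ attached to algebra retracts) — and reduce a general local $\phi$ to this by a Cohen factorisation $R\to R'\to S$. Set $b_q=\dim_k\Tor^R_q(S,k)$, so $P^R_S(t)=\sum_q b_q t^q$ with $b_0=1$, and for an $S$-module $X$ consider
\[\mathcal E(X)\colon\quad E^2_{pq}(X)=\Tor^S_p\bigl(X,\Tor^R_q(S,k)\bigr)\ \underset{p}{\Rightarrow}\ \Tor^R_{p+q}(X,k).\]

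First I would record three reformulations. (i) By Theorem \ref{p4} and Remark \ref{p4s}, $X$ is $\phi$-Golod iff $\mathcal E(X)$ satisfies $(\mathrm a)$ $d^r_{pq}=0$ for all $r\ge 2$, $q>0$, and $(\mathrm b)$ $E^\infty_{pq}(X)=0$ for all $q>0$; in particular $\phi$ is Golod, i.e.\ $k$ is $\phi$-Golod, iff $\mathcal E(k)$ satisfies $(\mathrm a)$ and $(\mathrm b)$. (ii) The comparison map $\phi_*\colon\Tor^R(X,k)\to\Tor^S(X,k)$ is the edge homomorphism $\Tor^R_n(X,k)\twoheadrightarrow E^\infty_{n,0}(X)\hookrightarrow E^2_{n,0}(X)=\Tor^S_n(X,k)$ of $\mathcal E(X)$, so its kernel in degree $n$ is the submodule $F_{n-1}\Tor^R_n(X,k)$ of the convergence filtration; hence $\phi_*$ is injective iff $E^\infty_{pq}(X)=0$ for all $q\ge 1$, that is, iff $\mathcal E(X)$ satisfies $(\mathrm b)$. (iii) Combining (i) and (ii), $M$ is $\phi$-Golod iff $\mathcal E(M)$ satisfies $(\mathrm a)$ and $\phi_*^M$ is injective. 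So the theorem reduces to: assuming $\phi_*^M$ injective, $\mathcal E(M)$ satisfies $(\mathrm a)$ if and only if $\phi$ is Golod.

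For ``$\phi$ Golod and $\phi_*^M$ injective $\Rightarrow$ $(\mathrm a)$ for $\mathcal E(M)$'' I would run the Golod construction relatively. From $\phi$ Golod one gets $\mathfrak n\,\ITor^R(S,k)=0$, hence $\Tor^R_q(S,k)\cong k^{b_q}$ for all $q$ and $E^2_{pq}(X)\cong\Tor^S_p(X,k)\otimes_k\Tor^R_q(S,k)$; moreover $\mathcal E(X)$ is a spectral sequence of graded $\Tor^R(S,k)$-modules, the action being on the coefficient slot. Taking a minimal $R$-free DG-algebra resolution $\mathbf G\to k$ and $C=S\otimes_R\mathbf G$ one has $H(C)=\Tor^R(S,k)$ with trivial products and trivial Massey products — which is exactly ``$\phi$ Golod'' — while $M\otimes_S C$ is a DG $C$-module with $H(M\otimes_S C)=\Tor^R(M,k)$. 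The $C$-module Massey operations on $\Tor^R(M,k)$ — which is what the higher-row differentials of $\mathcal E(M)$ encode — are governed by those of $C$ together with the $\Tor^R(S,k)$-action on $\Tor^R(M,k)$; since $\phi_*^M$ is $\Tor^R(S,k)$-linear and $\ITor^R(S,k)$ dies under the augmentation $\Tor^R(S,k)\to\Tor^S(S,k)=k$, injectivity of $\phi_*^M$ forces $\ITor^R(S,k)\cdot\Tor^R(M,k)=0$, which makes these module operations vanish; that is $(\mathrm a)$, and a rank count yields $P^S_M(t)=P^R_M(t)/\bigl(1-t(P^R_S(t)-1)\bigr)$. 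For the converse ``$M$ $\phi$-Golod $\Rightarrow$ $\phi$ Golod'' the plan is to transport the vanishing of the higher-row differentials from $\mathcal E(M)$ to $\mathcal E(k)$: here again $\mathfrak n\,\ITor^R(S,k)=0$, the $E^2$-pages of $\mathcal E(M)$ and $\mathcal E(k)$ have the same shape (with $\Tor^S(M,k)$ versus $\Tor^S(k,k)$ in the resolution slot), and since $M\ne 0$ supplies a nonzero class $m\in\Tor^R_0(M,k)=M/\mathfrak m M$, the Massey operations of $\ITor^R(S,k)$ that govern the differentials of $\mathcal E(k)$ occur — through functoriality of $\mathcal E(-)$ and the $\Tor^R(S,k)$-module structure — as ``$m$-components'' of the corresponding operations for $\mathcal E(M)$; hence they vanish, so $\mathcal E(k)$ satisfies $(\mathrm a)$ and $(\mathrm b)$, i.e.\ $\phi$ is Golod.

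The hardest point is the implication ``$M$ $\phi$-Golod $\Rightarrow$ $\phi$ Golod'': unlike the other direction it is not proved by exhibiting a resolution, and it rests on the precise comparison just sketched — essentially Levin's analysis of the change-of-rings spectral sequence — showing that the higher Massey operations of $\ITor^R(S,k)$ are detected inside the $C$-module operations carried by $\Tor^R(M,k)$ for any nonzero $S$-module $M$. The reduction of a general local homomorphism to the surjective case is a secondary, routine matter.
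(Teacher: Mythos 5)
You should first be aware that the paper does not prove this statement at all: it is quoted from Levin \cite[Theorem 1.1]{lev1} and used as a black box, so there is no in-paper argument to compare with; the question is whether your sketch stands on its own, and it has two genuine gaps. In the direction $(2)\Rightarrow(1)$, your key step is not justified. From injectivity of $\phi_*$ you correctly deduce $\ITor^R(S,k)\cdot\Tor^R(M,k)=0$, but triviality of this binary action does not by itself make the higher module Massey operations (equivalently the differentials $d^r$, $r\ge 2$, on the rows $q>0$ of $\mathcal{E}(M)$) vanish: when one tries to build the coherent trivial Massey operation $\mu(m,h_{2},\ldots,h_{p})$ inductively, at each stage there is an obstruction class in $\Tor^R(M,k)$ which has no reason to die merely because the products do. The hypothesis has to be used at every stage: one checks that each obstruction cycle becomes a boundary after applying $\phi_*$ (in $M\otimes_S G'$, with $G'$ an $S$-free DG algebra resolution of $k$, every positive-degree cycle of $G'$ bounds), and then injectivity of $\phi_*$ kills the obstruction — essentially Levin's induction. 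You would also need the relative analogue of Proposition \ref{p6.3}, identifying $\phi$-Golodness of $M$ with the existence of such a trivial Massey operation on the fibre; the paper proves this only in the absolute case of a Cohen presentation. So this half is salvageable, but as written it jumps from binary products to condition $(\mathrm a)$.

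The direction you yourself call hardest, $(1)\Rightarrow(2)$, is where the missing idea lies, and the ``$m$-components'' mechanism does not work. Functoriality gives maps $\mathcal{E}(M)\to\mathcal{E}(k)$ only along $S$-linear maps $M\to k$, and these are far from surjective on the $E^2$-page, so vanishing of differentials does not transport. At the level of Massey operations the comparison points the wrong way: precisely because $M$ is $\phi$-Golod, $\ITor^R(S,k)$ acts trivially on $\Tor^R(M,k)$, so multiplication by a class $m\in M/\mathfrak m M$ annihilates exactly the classes you would need in order to detect a nonvanishing algebra Massey product; a boundary witnessing $\langle m,v_1,\ldots,v_n\rangle=0$ in the module fibre yields no boundary witnessing $\langle v_1,\ldots,v_n\rangle=0$ in the algebra fibre. (Contrast the converse half of Theorem \ref{m2}, where the transport succeeds only because the map $\Ho(I)\to\Ho(A)[n]$, $z\mapsto Y_1\wedge\cdots\wedge Y_n\wedge z$, is an \emph{injective} $\Ho(R)$-module map \emph{into} the algebra homology.) The known proofs that the existence of a $\phi$-Golod module forces $\phi$ to be Golod rely on genuinely different machinery — Levin's structural analysis of $\Tor^S(k,k)$, or homotopy Lie algebra arguments in the spirit of Proposition \ref{p8} and Theorem \ref{p9} — none of which your sketch supplies. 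The reduction of a general local $\phi$ to the surjective case via Cohen factorisation is likewise asserted rather than argued, but that is minor compared with the above.
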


\begin{definition}{\rm (Algebra retracts)}\label{p1}
A ring (DG algebra) homorphism $j : R \rightarrow A$ is called an algebra (DG algebra) retract if there is a ring (DG algebra) homomorphism $p: A \rightarrow R$ such that $p \circ j = id$. We call $p$ a section of $j$. We also say that R is an algebra (DG algebra) retract of A.
\end{definition}

The following result of Herzog gives a relation between the Poincar\'e Series of a ring and that of its algebra retract.

\begin{theorem}{\rm \cite[Theorem 1]{her1}}\label{p5}
Let $j : (R, \mathfrak{m}) \rightarrow (A, \mathfrak{n})$ be an algebra retract with a section $p: (A, \mathfrak{n}) \rightarrow (R, \mathfrak{m})$. Let $M$ be a finitely generated $R$-module. Let $F_{*} \rightarrow R$ be the minimal resolution of $R$ by free $A$-modules and $G_{*} \rightarrow M$ be that of $M$ by free $R$-modules. Then $F_{*} \otimes_R G_{*}  \rightarrow M$ is the minimal resolution of $M$ by free $A$-modules and therefore $P^A_M(t) = P^A_R(t) P^R_M(t)$.
\end{theorem}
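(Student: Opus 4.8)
I would prove Theorem~\ref{p5} by establishing, in order, that $C_{\ast}:=F_{\ast}\otimes_R G_{\ast}$ (the total complex of the double complex $(F_i\otimes_R G_j)$) is a complex of free $A$-modules augmented to $M$, that it is acyclic in positive degrees with $\Ho_0(C_{\ast})=M$, and that it is minimal; the formula $P^A_M(t)=P^A_R(t)P^R_M(t)$ then follows by counting ranks. Throughout, $R$ and $M$ are given $A$-module structures via the section $p$, while each free $A$-module $F_i$ is simultaneously regarded as an $R$-module via $j$; since all rings are commutative, these two actions on $F_i$ commute, so $F_i\otimes_R G_j$ is an $A$-module and both $d_F\otimes 1$ and $1\otimes d_G$ are $A$-linear. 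Freeness is immediate: writing $F_i\cong A^{m_i}$ and $G_j\cong R^{n_j}$ gives $F_i\otimes_R G_j\cong (A\otimes_R R)^{m_in_j}\cong A^{m_in_j}$, so the $n$-th term $\bigoplus_{i+j=n}F_i\otimes_R G_j$ of $C_{\ast}$ is $A$-free of rank $\sum_{i+j=n}m_in_j$. The augmentation $C_0=F_0\otimes_R G_0\to M$ is the composite $F_0\otimes_R G_0\to R\otimes_R G_0=G_0\to M$ of $\varepsilon_F\otimes 1$ with $\varepsilon_G$, which one checks is $A$-linear using $p\circ j=\mathrm{id}$.

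For acyclicity I would run the spectral sequence of the double complex $(F_i\otimes_R G_j)$, taking homology first in the $F$-direction. Since each $G_j$ is $R$-free, $F_{\ast}\otimes_R G_j\cong F_{\ast}^{\,n_j}$, whose homology is $R^{n_j}\cong G_j$ concentrated in degree $0$; here one uses only that $F_{\ast}\to R$ is a resolution. Thus the $E^1$-page is concentrated in the row $i=0$, where it is $G_{\ast}$ with differential $d_G$ (under $R\otimes_R G_j=G_j$), so the $E^2$-page is $\Ho_{\ast}(G_{\ast})$, equal to $M$ in degree $0$ and zero otherwise. The spectral sequence degenerates, giving $\Ho_0(C_{\ast})=M$ and $\Ho_n(C_{\ast})=0$ for $n>0$, with the edge map realising the augmentation described above. (Equivalently: $F_{\ast}\to R$ is a quasi-isomorphism of complexes of $R$-modules and $G_{\ast}$ is a bounded-below complex of flat $R$-modules, so $F_{\ast}\otimes_R G_{\ast}\to R\otimes_R G_{\ast}=G_{\ast}\to M$ is a quasi-isomorphism.) Hence $C_{\ast}\to M$ is a free $A$-resolution of $M$.

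Minimality is the shortest step, and is where one uses that $F_{\ast}$ resolves $R$ \emph{minimally}, that $G_{\ast}$ resolves $M$ minimally, and the convention that $j$ is local. Minimality of $F_{\ast}$ over $A$ gives $d_F(F_i)\subseteq \n F_{i-1}$, hence $(d_F\otimes 1)(F_i\otimes_R G_j)\subseteq \n(F_{i-1}\otimes_R G_j)$. Minimality of $G_{\ast}$ over $R$ gives $d_G(G_j)\subseteq \m G_{j-1}$, hence $(1\otimes d_G)(F_i\otimes_R G_j)\subseteq F_i\otimes_R \m G_{j-1}=j(\m)(F_i\otimes_R G_{j-1})\subseteq \n(F_i\otimes_R G_{j-1})$, using $j(\m)\subseteq\n$. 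Therefore the total differential maps $C_{\ast}$ into $\n C_{\ast}$, so $C_{\ast}\to M$ is the minimal free $A$-resolution of $M$.

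Finally, minimality yields $\dim_k\Tor^A_n(M,k)=\operatorname{rank}_A(C_{\ast})_n=\sum_{i+j=n}(\operatorname{rank}_AF_i)(\operatorname{rank}_RG_j)$, while minimality of $F_{\ast}$ and $G_{\ast}$ gives $\operatorname{rank}_AF_i=\dim_k\Tor^A_i(R,k)$ and $\operatorname{rank}_RG_j=\dim_k\Tor^R_j(M,k)$; multiplying by $t^n$ and summing gives $P^A_M(t)=P^A_R(t)P^R_M(t)$. I do not anticipate a genuine obstacle: the argument is essentially a flat base-change/Künneth computation, and the only points requiring care are bookkeeping ones — keeping the two $A$-actions (through $p$) distinct from the $R$-action on each $F_i$ (through $j$), and verifying that the bottom-row differential on the $E^1$-page is exactly $d_G$ so that the augmentation comes out correctly.
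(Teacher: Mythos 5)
Your proof is correct, and in fact the paper gives no argument of its own for Theorem \ref{p5} --- it is quoted from Herzog's article \cite{her1}, whose proof proceeds exactly as you do: check that $F_{*}\otimes_R G_{*}$ is $A$-free, acyclic over $M$ (by tensoring the quasi-isomorphism $F_{*}\to R$ with the free, hence flat, complex $G_{*}$), and minimal (using $d_F(F_i)\subseteq\n F_{i-1}$, $d_G(G_j)\subseteq\m G_{j-1}$ and $j(\m)\subseteq\n$), then read off the Poincar\'e series from the ranks. So there is nothing to correct; your care in tracking the $A$-structure through the first factor and the $R$-structure via $j$ is precisely the bookkeeping the cited proof requires.
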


The following notion was introduced in \cite{lev4}.

\begin{definition}{\rm(Large homomorphisms)}\label{ap5}
Let $f : (R, \m, k) \twoheadrightarrow (S, \n, k)$ be a surjective local homomorphism. We say that $f$ is a large homomorphism if the induced map $f_* :  \Tor^R(k, k) \rightarrow \Tor^S(k, k)$ is a surjection equivalently $f^* :  \Ext_S(k, k) \rightarrow \Ext_R(k, k)$ is an injection.
\end{definition}

Note that the section map $p$ of an algebra retract is a large homomorphism. The following result is proved in \cite[Theorem 1.1]{lev4}.

\begin{theorem}\label{large}
Let $f : (R, \m, k) \twoheadrightarrow (S, \n, k)$ be a surjective local homomorphism of local rings. Then the following are equivalent.
\begin{enumerate}
 \item
The homomorphism $f$ is large.

\item
For any finitely generated $S$-module $M$, regarded as an $R$-module via $f$, the induced homomorphism $f_* :  \Tor^R(M, k) \rightarrow \Tor^S(M, k)$ is surjective. 

\end{enumerate}
\end{theorem}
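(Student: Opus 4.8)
The implication $(2)\Rightarrow(1)$ is immediate: $k$ is a finitely generated $S$-module, and the assertion that $f_*\colon\Tor^R(k,k)\to\Tor^S(k,k)$ is surjective is precisely the definition of largeness. For $(1)\Rightarrow(2)$ the plan is to analyse the change of rings spectral sequence
\[
E^2_{pq}(M)=\Tor^S_p\bigl(M,\Tor^R_q(S,k)\bigr)\underset{p}{\Rightarrow}\Tor^R_{p+q}(M,k)
\]
attached to a finitely generated $S$-module $M$. Because $f$ is surjective, each $\Tor^R_q(S,k)$ is a finite-dimensional $k$-vector space, so $E^2_{pq}(M)=\Tor^S_p(M,k)\otimes_k\Tor^R_q(S,k)$, and the natural comparison map $f_*\colon\Tor^R_n(M,k)\to\Tor^S_n(M,k)$ is the edge homomorphism onto the bottom row: it factors as a surjection $\Tor^R_n(M,k)\twoheadrightarrow E^\infty_{n,0}(M)$ followed by the inclusion $E^\infty_{n,0}(M)\hookrightarrow E^2_{n,0}(M)=\Tor^S_n(M,k)$. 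Hence $f_*$ is surjective for this $M$ if and only if no differential leaves the bottom row of $E(M)$, i.e. $d^r$ vanishes on $E^r_{\ast,0}(M)$ for every $r\ge2$.

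First I would settle the case $M=k$. By the previous paragraph, largeness is \emph{equivalent} to the bottom row of $E(k)$ surviving to $E^\infty$, and I would bootstrap this to full degeneration. The spectral sequence $E(k)$ is multiplicative, so each $d^r$ is a derivation of the graded algebra $E^r(k)$. The page $E^2(k)=\Tor^S(k,k)\otimes_k\Tor^R(S,k)$ is generated as a $k$-algebra by its bottom row $\Tor^S(k,k)\otimes 1$ together with its leftmost column $1\otimes\Tor^R(S,k)$; a derivation that kills the bottom row (by largeness) and the leftmost column (whose $d^r$-target $E^r_{-r,\ast}$ is zero) must vanish, so $d^r=0$ for all $r\ge2$ and $E(k)$ collapses at $E^2$. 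Using $\Tor^R_q(S,k)=\Tor^R_q(k,S)$ one reads off $P^R_k(t)=P^R_S(t)\,P^S_k(t)$; more precisely, $\Tor^R(k,k)$ is free as a module over the subalgebra $\Tor^R(S,k)$ (the image of $\Tor^R(S,k)\to\Tor^R(k,k)$), with $\Tor^R(k,k)\otimes_{\Tor^R(S,k)}k\cong\Tor^S(k,k)$.

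It remains to pass from $M=k$ to an arbitrary finitely generated $S$-module $M$, and \emph{this is where the real work lies}. The spectral sequence $E(M)$ is a module over the multiplicative spectral sequence $E(k)$, so once $E(k)$ has collapsed all differentials of $E(M)$ are $E^2(k)$-linear; moreover $E^2(M)=\Tor^S(M,k)\otimes_k\Tor^R(S,k)$ is generated over $E^2(k)$ by its bottom row $\Tor^S(M,k)\otimes 1$ (act by $1\otimes\Tor^R(S,k)$). Therefore, by induction on $r$, it suffices to show that the bottom-row differentials
\[
d^r\colon\Tor^S_p(M,k)\longrightarrow\Tor^S_{p-r}(M,k)\otimes_k\Tor^R_{r-1}(S,k)
\]
all vanish; granting this, $E(M)$ collapses and $f_*$ is surjective. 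For $M$ free over $S$ the spectral sequence lives in the single column $p=0$ and there is nothing to check; the general case I would reduce to this one by dévissage, propagating the vanishing of the bottom-row differentials through the long exact $\Tor$-sequences attached to a free presentation (or a composition series) of $M$, with the already-settled case $M=k$ as the base of the induction. The main obstacle is precisely this dévissage: a short exact sequence of $S$-modules does not induce a short exact sequence of spectral sequences, so one must argue at the level of the long exact $\Tor$-sequences and verify that the connecting homomorphisms are compatible with the spectral-sequence differentials---equivalently, and this is in substance Levin's computation in \cite{lev4}, one analyses the low-order differentials $d^r$ directly on explicit Koszul or bar representatives. (When $f$ is the section $p$ of an algebra retract $j\colon S\hookrightarrow R$, this last step is unnecessary: Theorem \ref{p5} already supplies $P^R_M(t)=P^R_S(t)\,P^S_M(t)$ for every finitely generated $S$-module $M$, which forces $E^\infty(M)=E^2(M)$ by comparing total dimensions in each homological degree.)
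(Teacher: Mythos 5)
Your implication $(2)\Rightarrow(1)$ and the general set-up are fine: the comparison map $\Tor^R_n(M,k)\to\Tor^S_n(M,k)$ is indeed the edge map of the change-of-rings spectral sequence, and your collapse of $E(k)$ at $E^2$ under largeness (multiplicative spectral sequence, differentials are derivations vanishing on the bottom row by hypothesis and on the left column for positional reasons) is a correct, standard argument, granting the multiplicative structure via DG-algebra resolutions. The genuine gap is exactly where you say "the real work lies", and it is not filled. For a general finitely generated $S$-module $M$, reducing to the vanishing of the bottom-row differentials $d^r\colon\Tor^S_p(M,k)\to\Tor^S_{p-r}(M,k)\otimes_k\Tor^R_{r-1}(S,k)$ gains nothing: the bottom row of $E(M)$ is precisely the part on which the $E(k)$-module structure gives no control, and largeness, being a hypothesis about $M=k$ only, does not obviously bear on these differentials. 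The proposed d\'evissage also does not close: for $0\to M'\to F\to M\to 0$ with $F$ free over $S$ one has $\Tor^S_n(M,k)\cong\Tor^S_{n-1}(M',k)$ for $n\ge 2$, but $\Tor^R_n(F,k)\cong\Tor^R_n(S,k)^{b}\neq 0$, so a class in $\Tor^R_{n-1}(M',k)$ lifting a given element of $\Tor^S_{n-1}(M',k)$ need not map to zero in $\Tor^R_{n-1}(F,k)$ and hence need not come from $\Tor^R_n(M,k)$; the chase stalls at exactly the point where largeness must enter, and deferring to "Levin's computation" at that point concedes the theorem rather than proving it.

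For the comparison you were asked about: the paper does not prove this statement at all; it quotes it from Levin \cite[Theorem 1.1]{lev4}, so there is no internal proof to match, and judged on its own your text is an honest but incomplete sketch of the hard implication $(1)\Rightarrow(2)$. Your closing parenthetical is correct but covers only a very special case: when $f$ is the section of an algebra retract, Theorem \ref{p5} gives $P^R_M(t)=P^R_S(t)\,P^S_M(t)$, and counting dimensions forces $E^\infty(M)=E^2(M)$ and hence surjectivity. The paper, however, invokes Theorem \ref{large} for arbitrary large homomorphisms (e.g.\ in Lemma \ref{ap6}), so the general implication still requires Levin's argument, or an equally substantive substitute, which your sketch does not supply.
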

We recall the definition of connected augmented DG algebra from \cite[Chapter 1]{gul}. The Massey products were defined more generally in \cite{may}. The following is an excerpt suitably tailored to our needs.
\begin{definition}{\rm(Trivial Massey products)}\label{p6.1}
Let $(R, \m, k)$ be a local ring and $U$ be a connected augmented DG algebra over $R$ with augmentation map $U \twoheadrightarrow k$. Let $P$ be a left DG module and $N$ be a right DG module over $U$. Let $\IH(U) = \ker \{\Ho(U) \rightarrow k\}$. For a homogeneous element $a \in U, N$ or $P$ we set $\bar{a} = (-1)^{\deg(a) + 1}a$. Assume that $v_1 \in \Ho(N)$, $v_2, \ldots, v_{n - 1} \in \IH(U)$ and $v_n \in \Ho(P)$ are homogeneous elements. We say that the Massey product $\langle v_1, \ldots, v_n \rangle = 0$ (trivial) if  for any set of elements $a_{ij}, 0 \leq i < j \leq n$, $(i,\ j) \not= (0, \ n)$ such that 
\begin{enumerate}
\item
$a_{i j} \in U$ for $ 1 \leq i < j \leq n - 1$,

\item $a_{0 j} \in N$ for $j = 1, 2, \ldots, n - 1$,

\item
$a_{i n} \in P$ for $i = 1, 2, \ldots, n - 1$,

\item
Class of $a_{i -1  i}$ is $v_i$,

\item
$d(a_{ij}) = \sum_{k = i + 1}^{j - 1}\overline{a_{i k}}a_{k j}$,
\end{enumerate}
there is an element $a_{0 n} \in N \otimes_U P$ satisfying $d(a_{0  n}) = \sum_{k = 1}^{n - 1}\overline{a_{0  k}}a_{k  n}$. The set $\{a_{ij} | 0 \leq i < j \leq n, (i,\ j) \not= (0, \ n) \}$ satisfying above five conditions is called the defining system for $v_1, \ldots, v_n$. Note that the matrix $A = (a_{i j})$ satisfies $d(A) = \bar{A}A$. 

If $\langle v_1, v_2, \ldots, v_n \rangle = 0$ for all possible choices of $v_1, v_2, \ldots, v_{n - 1} \in \IH(U)$, $v_n \in \Ho(P)$, 
then we say that $P$ admits trivial Massey products on homologies as left DG $U$-module. Similarly if $\langle v_1, v_2, \ldots, v_n \rangle = 0$ for all possible choices of $v_1 \in \Ho(N)$, $v_2, \ldots, v_n \in \IH(U)$, then we say that $N$ admits trivial Massey products on homologies as right DG $U$-module.
\end{definition}

The following lemma characterises Golod rings in terms of vanishing of Massey products on Koszul homologies. We refer the reader to \cite[Theorem 5.2.2]{av}, \cite[Theorems 1.5]{lev3} for a proof. 

\begin{lemma}\label{p6.2}
Let $(R, \m, k)$ be a local ring. Then the following are equivalent.

\begin{enumerate}
\item $R$ is a Golod ring.

\item
All the Massey products $\langle v_1, \ldots, v_n \rangle = 0$ for $v_i \in \Ho_{\geq 1}(R)$.

\item
There is a graded $k$-basis $\mathfrak{b}_R = \{h_{\lambda}\}_{\lambda \in \Lambda}$ of $\Ho_{\geq 1}(R)$ and a function $\mu : \sqcup_{i = 1}^{\infty} \mathfrak{b}_R^i \rightarrow  K^R$ such that $\mu(h_\lambda) \in Z(K^R)$ with  $cls(\mu(h_\lambda))= h_\lambda$ and 
\[ \partial \mu(h_{\lambda_1}, \ldots ,h_{\lambda_p}) = \sum_{j = 1}^{p-1}\overline{\mu(h_{\lambda_1}, \ldots ,h_{\lambda_j})}\mu(h_{\lambda_{j + 1}}, \ldots ,h_{\lambda_p}).\]
\end{enumerate}
\end{lemma}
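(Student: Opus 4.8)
The plan is to prove the cycle of implications $(3)\Rightarrow(1)\Rightarrow(2)\Rightarrow(3)$; the first and last are essentially constructive, and the middle one is where the Golod hypothesis is genuinely used.

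\emph{$(3)\Rightarrow(1)$: the Golod resolution.} Following the classical construction (see \cite[\S5.2]{av}), view $K=K^R$ as a connected DG algebra over $R$ with $\Ho_0(K)=k$, and let $W$ be the graded $k$-vector space with basis $\{w_\lambda\}_{\lambda\in\Lambda}$ in bijection with $\mathfrak{b}_R$, the element $w_\lambda$ sitting in homological degree $\deg(h_\lambda)+1$. On the graded free $R$-module $F=K\otimes_k T_k(W)$, where $T_k(W)=\bigoplus_{p\geq0}W^{\otimes p}$ is the tensor algebra, define a degree $-1$ endomorphism $\partial$ by combining the Koszul differential, the $K$-module multiplication, and, on a monomial $w_{\lambda_1}\otimes\cdots\otimes w_{\lambda_p}$, the ``correction'' terms built from $\mu(h_{\lambda_1},\ldots,h_{\lambda_j})\otimes(w_{\lambda_{j+1}}\otimes\cdots\otimes w_{\lambda_p})$ for $1\leq j\leq p$. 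A direct computation shows $\partial^2=0$ precisely because of the relations on $\mu$ in (3) together with the Leibniz rule in $K$. Since each $\mu(h_{\lambda_1},\ldots,h_{\lambda_j})$ is a cycle of positive homological degree it lies in $\m K$, so $\partial(F)\subseteq\m F$ and $F$ is a minimal complex; a standard filtration argument gives $\Ho_{\geq1}(F)=0$ and $\Ho_0(F)=k$, so $F$ is the minimal free resolution of $k$. Reading off $\mathrm{rank}_R F_n$ from the two tensor factors yields $P^R_k(t)=\sum_n(\mathrm{rank}_R F_n)t^n=\dfrac{\kappa_R(t)}{1-t(\kappa_R(t)-1)}$, i.e.\ Serre's bound from Definition \ref{p3}, so $R$ is Golod.

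\emph{$(2)\Leftrightarrow(3)$.} One builds $\mu$ on $\sqcup_{n\geq1}\mathfrak{b}_R^n$ by induction on $n$: for $n=1$ pick any $\mu(h_\lambda)\in Z(K^R)$ with $cls(\mu(h_\lambda))=h_\lambda$; assuming $\mu$ is defined on tuples of length $<n$ subject to the relations of (3), the element $z(h_{\lambda_1},\ldots,h_{\lambda_n})=\sum_{j=1}^{n-1}\overline{\mu(h_{\lambda_1},\ldots,h_{\lambda_j})}\,\mu(h_{\lambda_{j+1}},\ldots,h_{\lambda_n})$ is a cycle (a formal consequence of the relations on the shorter $\mu$'s), and hypothesis (2) says exactly that it is a boundary, so one may choose $\mu(h_{\lambda_1},\ldots,h_{\lambda_n})$ with $\partial\mu(h_{\lambda_1},\ldots,h_{\lambda_n})=z(h_{\lambda_1},\ldots,h_{\lambda_n})$; this gives $(2)\Rightarrow(3)$. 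Conversely, given $\mu$ as in (3) and an arbitrary defining system $(a_{ij})$ for $v_1,\ldots,v_n\in\Ho_{\geq1}(R)$, one produces the required $a_{0n}$ — using that the Massey product is well defined modulo its indeterminacy and that a single coherent $\mu$ pins down the classes entering any defining system — so $(3)\Rightarrow(2)$.

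\emph{The main obstacle: $(1)\Rightarrow(2)$.} This is the substantive implication. I would invoke Remark \ref{p4s}: $R$ is Golod if and only if in the change-of-rings spectral sequence $E^2_{pq}=\Tor^R_p(k,\Ho_q(R))\Rightarrow\Ho_{p+q}(K^R)$ one has $d^r=0$ for all $r\geq2$, $q>0$, and $E^\infty_{pq}=0$ for $q>0$. Running the inductive construction of the previous paragraph, the obstruction class $[z(h_{\lambda_1},\ldots,h_{\lambda_n})]\in\Ho(K^R)$ is, up to sign, the value of the iterated differential $d^n$ (equivalently the degree-$n$ matric Massey product) on the class in $E^n$ determined by the lower $\mu$'s; so the vanishing of the higher differentials is exactly what keeps the induction alive and forces all Massey products to vanish. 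The delicate point — the one I expect to be hardest to pin down cleanly — is precisely this identification of $[z]$ with a spectral-sequence differential, with all signs and the fact that the differentials live on the whole associated graded rather than on individual classes. An alternative that sidesteps the identification is a numerical argument: if some obstruction class were nonzero at stage $n$, then the partially-built complex would force $\dim_k\Tor^R_i(k,k)$ strictly below Serre's bound for some $i$, contradicting (1). Either route, combined with $(2)\Leftrightarrow(3)$ and $(3)\Rightarrow(1)$, closes the equivalence.
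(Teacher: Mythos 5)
Your outline follows the same classical route as the sources the paper itself cites in place of a proof (\cite[Theorem 5.2.2]{av}, \cite[Theorem 1.5]{lev3}): the explicit resolution for $(3)\Rightarrow(1)$ and the inductive construction of $\mu$ for $(2)\Rightarrow(3)$ are sound in outline. The genuine gap is that nothing in your proposal actually delivers statement $(2)$. With Definition \ref{p6.1}, $\langle v_1,\ldots,v_n\rangle=0$ is quantified over \emph{every} defining system, whereas both of your strategies for $(1)\Rightarrow(2)$ only control the single defining system arising from your partially built $\mu$: the spectral-sequence strategy hinges on identifying the obstruction class $[z]$ with a differential of the sequence in Remark \ref{p4s}, an identification you explicitly leave unproved (and which is not a formality, since those differentials act on associated-graded classes and only see matric Massey products of classes surviving to the relevant page), while the ``numerical'' alternative, even if completed, would show that the inductive construction of one $\mu$ can be continued, i.e.\ it aims at $(1)\Rightarrow(3)$, not $(1)\Rightarrow(2)$. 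Similarly, your $(3)\Rightarrow(2)$ rests on the assertion that the higher Massey product is ``well defined modulo its indeterminacy,'' which for products of length $\geq 3$ with the all-defining-systems notion of triviality used here is exactly what has to be proved, not assumed. So as written your argument establishes $(3)\Rightarrow(1)$ and $(2)\Rightarrow(3)$ but leaves $(2)$ disconnected; the missing link is precisely the content of \cite[Theorem 1.5]{lev3} (or the corresponding argument in \cite{av}) and must be supplied rather than sketched.

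A smaller but real slip occurs in $(3)\Rightarrow(1)$: $\mu(h_{\lambda_1},\ldots,h_{\lambda_j})$ is a cycle only when $j=1$, so ``cycle of positive degree, hence in $\m K^R$'' does not justify minimality of $F$ for the terms with $j\geq 2$. The containment $\mu(h_{\lambda_1},\ldots,h_{\lambda_j})\in\m K^R$ is nevertheless true and is what you need: arguing by induction on $j$, the prescribed boundary $\partial\mu(h_{\lambda_1},\ldots,h_{\lambda_j})$ is a sum of products of elements of $\m K^R$, hence lies in $\m^2K^R$, and any element of $K^R_{\geq 1}$ whose boundary lies in $\m^2K^R$ must lie in $\m K^R$ because the images of the chosen generators of $\m$ are linearly independent in $\m/\m^2$ (the same argument settles $Z_{\geq 1}(K^R)\subseteq\m K^R$ for the $j=1$ case). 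Without this correction $F$ is only a resolution, and you obtain Serre's inequality rather than the equality defining the Golod property.
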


We recall the definition of graded Lie algebras and other necessary terminology 
from \cite[\S 10]{av}. The following result is proved in \cite[Proposition A.1.10]{lem}. Another proof in the case of characteristic zero can be found in the corollary following \cite[Proposition 21.4]{felix}. In fact the same proof will go through in positive characteristic too after modifications.

\begin{proposition}\label{p8}
Let $V = \bigoplus_{i \geq 1} V_i$ be a positively graded vector space over a field $k$ and $L$ be a graded free Lie algebra on $V$. Then, any graded sub Lie algebra of $L$ is free.
\end{proposition}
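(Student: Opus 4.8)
The plan is to reduce freeness of a graded Lie algebra to a homological property of its universal enveloping algebra, and then transport that property along the inclusion $U(M)\hookrightarrow U(L)$ by flat base change. Write $U(-)$ for the universal enveloping algebra; since $V$ is \emph{positively} graded, the free Lie algebra $L=L(V)$ and its enveloping algebra $U(L)=T(V)$ (the tensor algebra on $V$, by the Poincar\'e--Birkhoff--Witt theorem) are connected graded augmented $k$-algebras, and for every graded sub Lie algebra $M\subseteq L$ the algebra $U(M)$ is a connected graded augmented $k$-subalgebra of $T(V)$.

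\emph{Step 1: a homological criterion for freeness.} I would first establish the equivalence: a connected graded Lie algebra $M$ over $k$ is free if and only if $\Ext^i_{U(M)}(k,k)=0$ for all $i\ge 2$, equivalently $\mathrm{pd}_{U(M)}\,k\le 1$ (which for a connected graded algebra coincides with having global dimension $\le 1$). One implication is immediate: if $M=L(W)$ then $U(M)=T(W)$, whose augmentation ideal is $T(W)\otimes_k W$ as a left module, yielding a length-one free resolution of $k$. For the converse one chooses a graded subspace $W\subseteq M$ mapping isomorphically onto the indecomposables $M/[M,M]$, lifts it to a surjection $\psi:L(W)\twoheadrightarrow M$ with kernel $\mathfrak r\subseteq[L(W),L(W)]$, identifies $\Tor^{U(M)}_2(k,k)$ with the space of minimal relations $\mathfrak r/[L(W),\mathfrak r]$, and concludes $\mathfrak r=0$ by a graded Nakayama argument—legitimate because the grading is concentrated in positive degrees. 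This is the content of \cite[Proposition A.1.10]{lem} (and of \cite[Proposition 21.4]{felix} when $\Ch k=0$); I would cite it but include the short proof, observing that no restriction on $\Ch k$ is used.

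\emph{Step 2: base change along $U(M)\hookrightarrow U(L)$.} Picking a homogeneous $k$-basis of $M$ and extending it to one of $L$, the graded PBW theorem makes $U(L)=T(V)$ a \emph{free}, hence flat, right $U(M)$-module. The Cartan--Eilenberg change-of-rings spectral sequence for $U(M)\hookrightarrow U(L)$ with the trivial modules then collapses, by flatness, to natural isomorphisms
\[
\Ext^p_{U(M)}(k,k)\;\cong\;\Ext^p_{U(L)}\bigl(\,U(L)\otimes_{U(M)}k,\ k\,\bigr),\qquad p\ge 0.
\]
Applying Step 1 to the \emph{free} Lie algebra $L$, the algebra $U(L)=T(V)$ has global dimension at most $1$, so the right-hand side vanishes for $p\ge 2$. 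Hence $\Ext^p_{U(M)}(k,k)=0$ for $p\ge 2$, and Step 1 applied to $M$ gives that $M$ is free. This is the whole argument.

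\emph{Expected main obstacle.} The delicate part is Step 1, specifically the identification of $\Tor^{U(M)}_2(k,k)$ with the minimal relations $\mathfrak r/[L(W),\mathfrak r]$ via the minimal presentation—i.e.\ the comparison between Lie algebra cohomology and $\Ext$ over the enveloping algebra—together with checking that the minimal generating space $W$ and the Nakayama step interact correctly with the positive grading. A secondary point requiring care is Step 2 in positive characteristic: the precise form of PBW and of $U(-)$ when $M$ has elements of odd degree. These are exactly the ``modifications'' alluded to after \cite[Proposition 21.4]{felix}, and they do not alter the strategy.
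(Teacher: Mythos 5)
Your argument is correct and is essentially the proof the paper relies on: the paper gives no proof of Proposition \ref{p8}, instead citing \cite[Proposition A.1.10]{lem} and the corollary following \cite[Proposition 21.4]{felix}, and those sources argue exactly as you do --- freeness of a positively graded Lie algebra $M$ is detected by $\Ext^{p}_{U(M)}(k,k)=0$ for $p\geq 2$, and PBW makes $U(L)$ free over $U(M)$, so the vanishing descends from the tensor algebra $U(L)=T(V)$. The caveats you flag about small positive characteristic (the divided-square structure and the adjusted PBW theorem) are precisely the ``modifications'' the paper alludes to after citing \cite{felix}.
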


Let $F \rightarrow k$ be an augmented DG $\Gamma$-algebra over a field $k$ with locally Noetherian homology. Then it is known that $\Tor^F(k, k)$ is a graded Hopf $\Gamma$-algebra which is locally finite dimensional over $k$ and there is a unique graded Lie algebra $L$ whose universal enveloping algebra is the graded vector space dual $\Tor^F(k, k)^{\lor}$. We call this Lie algebra $L$ the homotopy Lie algebra of $F$ and denote by $\pi(F)$. We refer the reader to \cite[\S 1]{av2} for more details. If $R\langle X \rangle$ is an acyclic closure of the residue field $k$ over a local ring $R$, then $\pi(R) = \Ho \Der^{\gamma}_R(R\langle X \rangle, R\langle X \rangle)$ (see \cite[Theorem 10.2.1]{av}).

We conclude this section stating the following result which follows from \cite[Proposition 3.1]{av2},  \cite[Theorem 3.4]{av1}.

\begin{theorem}\label{p9}
A local ring $R$ is Golod if and only if $\pi(K^R) = \pi^{\geq 2}(R)$ is a free Lie algebra.
\end{theorem}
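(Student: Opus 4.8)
The plan is to obtain this by combining two structural results: an identification of $\pi(K^R)$ within $\pi(R)$, and a homotopy-Lie-algebra criterion for the Golod property.

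The first ingredient, $\pi(K^R)=\pi^{\geq2}(R)$, I would read off the definition of the homotopy Lie algebra through acyclic closures. Write $K^R=R\langle x_1,\dots,x_n\rangle$, where the $x_i$ are exterior variables of homological degree $1$ whose differentials form a minimal generating set of $\m$. An acyclic closure $R\langle X\rangle$ of $k$ over $R$ has $X_1=\{x_1,\dots,x_n\}$ and all remaining variables in degrees $\geq2$, so $R\langle X\rangle=K^R\langle X_{\geq2}\rangle$ is simultaneously an acyclic closure of $k$ over $K^R$ (no degree-one variables are needed, since $\Ho_0(K^R)=k$). Comparing the associated complexes of $\gamma$-derivations yields $\pi^1(K^R)=0$ and $\pi^i(K^R)=\pi^i(R)$, with the same bracket, for all $i\geq2$; this is \cite[Proposition 3.1]{av2}.

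The second ingredient is the criterion, \cite[Theorem 3.4]{av1}, that $R$ is Golod if and only if $\pi^{\geq2}(R)$ is a free graded Lie algebra, and I would sketch its mechanism thus. From $R\langle X\rangle=K^R\langle X_{\geq2}\rangle$, together with the unique factorisation of each divided-power monomial of the acyclic closure into its $X_1$-part and its $X_{\geq2}$-part, counting ranks gives $P^R_k(t)=(1+t)^n\operatorname{Hilb}_k\Tor^{K^R}(k,k)$, the factor $(1+t)^n$ being the generating function of the squarefree monomials in $x_1,\dots,x_n$. Comparing with Serre's bound (whose numerator $\kappa_k(t)$ is likewise $(1+t)^n$), $R$ is Golod precisely when $\operatorname{Hilb}_k\Tor^{K^R}(k,k)=\bigl(1-t(\kappa_R(t)-1)\bigr)^{-1}$. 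By Lemma \ref{p6.2} this happens exactly when all Massey products on $\Ho_{\geq1}(K^R)$ vanish, which is the hypothesis under which the bar construction of $K^R$ makes $\Tor^{K^R}(k,k)$ the cofree coalgebra on $\Sigma\Ho_{\geq1}(K^R)$ with its induced product; dualising, $U(\pi^{\geq2}(R))=\Tor^{K^R}(k,k)^{\lor}$ is then a tensor algebra, and because $U(\mathfrak g)$ is a tensor algebra only when $\mathfrak g$ is free --- where Proposition \ref{p8}, that a graded sub Lie algebra of a free Lie algebra is free, enters --- the Lie algebra $\pi^{\geq2}(R)=\pi(K^R)$ is free. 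The converse reverses this chain, the additional point being to identify the minimal generating degrees of the free Lie algebra $\pi^{\geq2}(R)$ with the numbers $\dim_k\Ho_i(K^R)$ so as to recover Serre's bound.

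The step carrying the genuine content --- everything else being definitional or the standard Golod/bar-construction dictionary --- is this last passage between ``$\pi^{\geq2}(R)$ is a free Lie algebra'' and ``$\Tor^{K^R}(k,k)$ has the Golod Hilbert series''. In the forward direction it relies on the structure theory of graded Hopf algebras (a Milnor--Moore type statement whose positive-characteristic version needs the care flagged around Proposition \ref{p8}); in the backward direction, on the degree-matching just mentioned. Both are exactly what \cite[Theorem 3.4]{av1} delivers, and with the identification of the first paragraph the theorem follows.
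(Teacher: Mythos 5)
Your proposal is correct and takes the same route as the paper: the paper gives no independent proof of Theorem \ref{p9}, stating only that it follows from \cite[Proposition 3.1]{av2} (the identification $\pi(K^R)=\pi^{\geq 2}(R)$) and \cite[Theorem 3.4]{av1} (the criterion that $R$ is Golod precisely when this Lie algebra is free), which are exactly the two ingredients you combine. Your sketch of the internal mechanism of those cited results (acyclic closures, the bar-construction/Massey-product dictionary, and the role of Proposition \ref{p8}) is consistent with the standard arguments, so nothing further is required.
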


%
\section{Characterisation of Golod modules}
We give a characterisation of Golod modules in terms of vanishing of Massey products on Koszul homologies. 
\begin{proposition}\label{p6.3}
Let $(R, \m, k)$ be a local ring and $M$ be an $R$-module. Then the following are equivalent.

\begin{enumerate}
\item[(i)]
$M$ is a Golod $R$-module.

\item[(ii)]
The ring $R$ is Golod and all the Massey products $\langle v_1, \ldots, v_n \rangle = 0$ for $v_1 \in \Ho(M)$ and $v_i \in \Ho_{\geq 1}(R)$ for $i = 2, \ldots, n$.

\item[(iii)]
There is a graded $k$-basis $\mathfrak{b}_R = \{h_{\lambda}\}_{\lambda \in \Lambda}$ of $\Ho_{\geq 1}(R)$, a graded $k$-basis $\mathfrak{b}_M = \{m_{\lambda}\}_{\lambda \in \Lambda'}$ of $\Ho(M)$ and a function 
\[\mu : (\sqcup_{i = 0}^{\infty} \mathfrak{b}_M \times \mathfrak{b}_R^i) \sqcup (\sqcup_{i = 1}^{\infty} \mathfrak{b}_R^i) \rightarrow K^M \sqcup K^R\]
such that $\mu(\sqcup_{i = 0}^{\infty} \mathfrak{b}_M \times \mathfrak{b}_R^i) \subset K^M$, $\mu(\sqcup_{i = 1}^{\infty} \mathfrak{b}_R) \subset K^R$, $\mu(h_\lambda) \in Z(K^R)$ with  $cls(\mu(h_\lambda))= h_\lambda$, $\mu(m_\lambda) \in Z(K^M)$ with  $cls(\mu(m_\lambda))= m_\lambda$ and 

\[ \partial \mu(h_{\lambda_1}, \ldots ,h_{\lambda_p}) = \sum_{j = 1}^{p-1}\overline{\mu(h_{\lambda_1}, \ldots ,h_{\lambda_j})}\mu(h_{\lambda_{j + 1}}, \ldots ,h_{\lambda_p}),\]

\[ \partial \mu(m_{\lambda_1}, h_{\lambda_2}, \ldots ,h_{\lambda_p}) = \sum_{j = 1}^{p-1}\overline{\mu(m_{\lambda_1},  h_{\lambda_2}, \ldots ,h_{\lambda_j})}\mu(h_{\lambda_{j + 1}}, \ldots ,h_{\lambda_p}).\]
\end{enumerate}
\end{proposition}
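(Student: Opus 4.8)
The plan is to mirror the proof of the ring case, Lemma~\ref{p6.2} (see \cite[Theorem 5.2.2]{av} and \cite[Theorem 1.5]{lev3}), in the relative setting of a module over a ring. Throughout one works with the Koszul complex $K=K^R$, viewed as a connected augmented DG $R$-algebra, and with $K^M=K\otimes_R M$, viewed as a DG $K$-module, so that $\Ho_i(K)=\Ho_i(R)$, $\Ho_i(K^M)=\Ho_i(M)$, and the Serre bound of Definition~\ref{p3} reads $P^R_M(t)\le\kappa_M(t)/\bigl(1-t(\kappa_R(t)-1)\bigr)$. I would prove the cycle of implications (i)$\Rightarrow$(ii)$\Rightarrow$(iii)$\Rightarrow$(i).

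(i)$\Rightarrow$(ii). If $M$ is a Golod $R$-module then, by Remark~\ref{p4s}, $\hat M$ is a $\phi$-Golod module for a minimal Cohen presentation $\phi\colon(Q,\mathfrak{l},k)\twoheadrightarrow\hat R$; hence $\phi$ is Golod by Theorem~\ref{levingm}, so $R$ is a Golod ring and, by Lemma~\ref{p6.2}, all Massey products $\langle v_1,\dots,v_n\rangle$ with every $v_i\in\Ho_{\geq 1}(R)$ vanish. For the products with $v_1\in\Ho(M)$ and $v_2,\dots,v_n\in\Ho_{\geq 1}(R)$, Theorem~\ref{p4} tells us that in the spectral sequence $\Tor^R_p(M,\Ho_q(R))\Rightarrow\Ho_{p+q}(M)$ of Remark~\ref{p4s} one has $d^r_{pq}=0$ for $r\geq 2$, $q>0$, and $E^\infty_{pq}=0$ for $q>0$. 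Under the identification --- as in \cite{may} --- of the higher differentials of this spectral sequence with the (matric) Massey products of classes of $\Ho_{\geq 1}(R)$ on $\Ho(M)$, the first set of vanishings says precisely that each such $\langle v_1,\dots,v_n\rangle$ is trivial; combined with homotopy-invariance of triviality (perturbing a defining system changes the obstruction only by a boundary), this gives (ii).

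(ii)$\Rightarrow$(iii). Build $\mu$ by induction on word length. For length $1$, take $\mu(h_\lambda)\in Z(K^R)$ and $\mu(m_\lambda)\in Z(K^M)$ to be chosen cycle representatives of the basis elements. Assume $\mu$ has been defined on all shorter words and satisfies the two displayed identities there. For a word $w=x_1x_2\cdots x_p$ --- either an all-$\mathfrak{b}_R$ word or one beginning with a $\mathfrak{b}_M$-letter followed by $\mathfrak{b}_R$-letters --- the element $\sum_{j=1}^{p-1}\overline{\mu(x_1,\dots,x_j)}\,\mu(x_{j+1},\dots,x_p)$ lies in $Z(K^R)$, respectively $Z(K^M)$ (a short computation using the identities for the shorter words), and it is a boundary: by Lemma~\ref{p6.2} in the first case and by (ii) in the second. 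Choosing $\mu(w)$ with that boundary completes the induction and produces the coherent trivializing system of (iii).

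(iii)$\Rightarrow$(i). Feeding $\mu$ in as the system of higher homotopies, I would run the module version (see \cite{lev3}) of Golod's construction \cite[\S5]{av}: one builds a minimal complex $\mathbb{G}$ of free $R$-modules whose free generators are indexed by the pairs $(m_\mu,\ h_{\lambda_1}\cdots h_{\lambda_p})$ with $m_\mu\in\mathfrak{b}_M$ and $h_{\lambda_1}\cdots h_{\lambda_p}$ a (possibly empty) word in $\mathfrak{b}_R$, placed so that the rank generating series of $\mathbb{G}$ is $\kappa_M(t)/\bigl(1-t(\kappa_R(t)-1)\bigr)$, and whose differential is assembled from the Koszul differential and multiplication by the elements $\mu(\text{initial subword})$. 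The two families of identities in (iii) are exactly the relations forcing $\partial_{\mathbb{G}}^2=0$, and the entries of $\partial_{\mathbb{G}}$ lie in the maximal ideal since all classes of $\mathfrak{b}_R$ and $\mathfrak{b}_M$ have positive homological degree, so $\mathbb{G}$ is minimal. Once one checks that $\mathbb{G}$ is acyclic with $\Ho_0(\mathbb{G})=M$, it is the minimal free resolution of $M$, so $P^R_M(t)$ attains the Serre bound and $M$ is a Golod $R$-module. This last acyclicity check is the one genuinely substantial step (it is the counterpart of the condition $E^\infty_{pq}=0$ for $q>0$ in Theorem~\ref{p4}); as in the ring case I would obtain it by filtering $\mathbb{G}$ so that its associated graded splits as a direct sum of shifted copies of $K$ and $K^M$, then comparing with the Serre bound through the resulting spectral sequence --- the only feature not already present in the proof of Lemma~\ref{p6.2} being the $\mathfrak{b}_M$-indexed generators, which are exactly what the second family of identities in (iii) governs.
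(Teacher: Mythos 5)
Your arguments for (i)$\Rightarrow$(ii) and (ii)$\Rightarrow$(iii) are fine and essentially the ones in the paper (the paper cites Levin for the vanishing of the Massey products and extends the ring-level trivial Massey operation of Lemma \ref{p6.2} by exactly the induction you describe). The gap is in (iii)$\Rightarrow$(i). You propose to run a ``module version of Golod's construction'': a minimal free complex $\mathbb{G}$ with free generators indexed by pairs $(m_\mu, h_{\lambda_1}\cdots h_{\lambda_p})$ and differential ``assembled from the Koszul differential and multiplication by the elements $\mu(\text{initial subword})$.'' This is not well defined as stated: the values $\mu(m_{\lambda_1},h_{\lambda_2},\ldots,h_{\lambda_j})$ live in $K^M=K^R\otimes_R M$, which is not a free $R$-module and is not spanned by your proposed generators, so ``multiplication by $\mu$'' does not give maps between the free modules you have declared; in the ring case the construction works precisely because the coefficient complex is $K^R$ itself (a DG algebra of free modules) and the resolution is $K^R\otimes_R T(V)$, not a free module on $\mathfrak{b}_k\times(\text{words})$. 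Your own acyclicity plan betrays the problem: a filtration whose associated graded is ``shifted copies of $K$ and $K^M$'' cannot underlie a complex of free modules unless $M$ is free. Indeed the paper explicitly remarks, right after this proposition, that no analogue of the Golod resolution is known for Golod modules --- so the step you defer as ``the one genuinely substantial step'' is exactly the step that is missing, and the construction it is supposed to rest on does not exist in the form you describe.

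What the paper does instead avoids resolving $M$ altogether. Since (iii) contains a trivial Massey operation for $R$, one has the Golod resolution $G$ of $k$ with underlying module $K^R\otimes_R T(V)$ and the split exact sequence of complexes $0\to K^R\to G\to G\otimes_R V\to 0$. Tensoring with $M$ and taking homology gives a long exact sequence in which $\Ho_n(M\otimes_R G)=\Tor^R_n(M,k)$, and a Poincar\'e-series count shows $M$ is Golod if and only if the connecting map $\delta:\Ho(M\otimes_R G\otimes_R V)\to \Ho(M\otimes_R K^R)$ vanishes. The second family of identities in (iii) is then used to produce ``special cycles'' $c(\lambda_1,\ldots,\lambda_n)=\sum_j \mu(m_{\lambda_1},h_{\lambda_2},\ldots,h_{\lambda_j})\,v_{\lambda_{j+1}}\otimes\cdots\otimes v_{\lambda_n}$, which visibly lift to cycles of $M\otimes_R G$ and hence are killed by $\delta$; a filtration by word length in $V$ shows every cycle of $M\otimes_R G\otimes_R V$ agrees, modulo boundaries, with an $R$-linear combination of special cycles, whence $\delta=0$. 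If you want to keep your outline, you would have to replace your construction of $\mathbb{G}$ by an argument of this kind (or otherwise genuinely construct a resolution of $M$ from the data in (iii), which would be new); as written, the implication (iii)$\Rightarrow$(i) is not proved.
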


\begin{proof}    
If $M$ is Golod, then the ring $R$ is Golod and the map $\Ho(M) \rightarrow \Tor^R(M, k)$ is an injection by Theorem \ref{levingm}. The Massey products vanish by an argument implicit in \cite[Theorem 1.1]{lev1}(also see \cite[Theorem 3.8]{lev3}). So $(i)$ implies $(ii)$.

Suppose $(ii)$ holds. We choose a $k$-basis $\mathfrak{b}_M$ of $\Ho(M)$ and a $k$-basis $\mathfrak{b}_R$ of $\Ho_{\geq 1}(R)$ which admits a trivial Massey operation $\mu : \sqcup_{i = 1}^{\infty} \mathfrak{b}_R^i \rightarrow  K^R$ by Lemma \ref{p6.2}. The map $\mu$ can easily be extended to  $(\sqcup_{i = 0}^{\infty} \mathfrak{b}_M \times \mathfrak{b}_R^i) \sqcup (\sqcup_{i = 1}^{\infty} \mathfrak{b}_R^i)$ with required properties. So $(ii)$ implies $(iii)$.

The only non trivial part is that $(iii)$ implies $(i)$. We recall the proof of \cite[Theorem 5.2.2]{av}. We use the same notations as in \cite[Theorem 5.2.2]{av}. A minimal free resolution $G \twoheadrightarrow k$ is provided where $G_n = \bigoplus_{h + i_1 + i_2 + \ldots + i_p = n} K^R_h \otimes V_{i_1} \otimes_R \ldots \otimes_R V_{i_p}$ and each $V_n$ is a free $R$-module with basis $\{v_{\lambda} : n = |v_{\lambda}| = |{h_{\lambda}}| + 1\}_{\lambda \in \Lambda}$. The differential $\partial$ on $G$ is defined by 
\[\partial(a \otimes v_{\lambda_1} \otimes \ldots \otimes v_{\lambda_p}) =  \partial(a) \otimes v_{\lambda_1} \otimes \ldots \otimes v_{\lambda_p} + 
(-1)^{|a|}\sum_{j = 1}^pa\mu(h_{\lambda_1}, \ldots, h_{\lambda_j}) \otimes v_{\lambda_{j + 1}} \otimes \ldots \otimes v_{\lambda_p}. 
\]

We have a short split exact sequence $0 \rightarrow K^R \rightarrow G \rightarrow G \otimes_R V \rightarrow 0$. This gives the following long exact sequence of homologies 
\begin{align*}
&\cdots \bigoplus_{j \geq 1} {\Ho}_{n - j}(M \otimes_RG) \otimes_R V_{j + 1}  \xrightarrow{\delta} {\Ho}_{n}(M \otimes_R K^R) \rightarrow {\Ho}_{n}(M \otimes_R G)\\ &\rightarrow \bigoplus_{j \geq 1} {\Ho}_{n - 1 - j}(M \otimes_RG) \otimes_R V_{j + 1} \xrightarrow{\delta}  {\Ho}_{n - 1}(M \otimes_R K^R) \cdots.
\end{align*}
 Now the boundary map $\delta = 0$ if and only if the above long exact sequence splits into short exact sequences $0 \rightarrow \Ho_{n}(K^M) \rightarrow \Ho_{n}(M \otimes_R G) \rightarrow \bigoplus_{ j \geq 1} \Ho_{n - 1 - j}(M \otimes_RG) \otimes_R V_{j + 1}  \rightarrow 0$.
The later is equivalent to $\kappa_M(t) + P^R_M(t) \times t(\kappa_R(t) - 1) = P^R_M(t)$ i.e. $P^R_M(t) = \frac{\kappa_M(t)}{1 - t(\kappa_R(t) - 1)}$. So $M$ is a Golod module if and only if $\delta = 0$.

We consider the element $c(\lambda_1, \ldots, \lambda_n) = \sum_{j = 1}^{n - 1} {\mu(m_{\lambda_1},  h_{\lambda_2}, \ldots ,h_{\lambda_j})} v_{\lambda_{j + 1}}\otimes \ldots \otimes v_{\lambda_{n}} \in M \otimes_R G \otimes _R V$.  It is easy to see that  $c(\{\lambda_i\})$ is a cycle in $M \otimes_R G \otimes _R V$, i.e. $\partial c(\{\lambda_i\}) = 0$. The computation is similar to that involved to show that $\partial(\partial(v_{\lambda_1} \otimes \ldots \otimes v_{\lambda_n})) = 0$. The cycle $c(\{\lambda_i\})$  in $M \otimes_R G \otimes _R V$ lifts to a cycle $c'(\{\lambda_i\}) = \sum_{j = 1}^{n} \mu(m_{\lambda_1},  h_{\lambda_2}, \ldots ,h_{\lambda_j}) v_{\lambda_{j + 1}}\otimes \ldots \otimes v_{\lambda_{n}} \in M \otimes_R G$. So $\delta([c(\{\lambda_i)\}]) = 0$. We call $c(\{\lambda_i\})$ a special cycle of $M \otimes_R G \otimes _R V$. Therefore, to prove that $M$ is Golod (equivalently $\delta = 0$), it is enough to show that any cycle of $M \otimes_R G \otimes _R V$ is a finite $R$-linear sum of special cycles modulo boundary. Let $\mathfrak{S}$ be the $R$-submodule of $M \otimes_R G \otimes _R V$ generated by special cycles. We show that $Z(M \otimes_R G \otimes_R V)  \subset  \mathfrak{S} + B(M \otimes_R G \otimes_R V)$.

Let $T_j(V)$ denote the $j$-th graded component of the tensor $R$-algebra $T(V)$ on $V$. Set $\mathfrak{F}_n = \bigoplus_ {j = 1}^{n}K^M \otimes T_j(V)$. Then $\{\mathfrak{F}_n\}_{n \geq 1}$ is a filtration of $ M \otimes_R G \otimes _R V$ by sub-complexes.
Any cycle $x \in M \otimes_R G \otimes _R V$  is in $\mathfrak{F}_n$ for some $n$. So $x$ can be written as $x = \sum_{j = 1}^m z_j \otimes v_{\lambda_{1}^j} \otimes \ldots \otimes v_{\lambda_{n}^j} + y$, $z_j \in K^M$, $y \in \mathfrak{F}_{n - 1}$. We have $d(x) = \sum_{j = 1}^m d(z_j) \otimes v_{\lambda_{1}^j} \otimes \ldots \otimes v_{\lambda_{n}^j} + y' = 0$, $y' \in \mathfrak{F}_{n - 1}$. So each $z_j$ is a cycle of $K^M$. 

Pick $z_j \otimes v_{\lambda_{1}^j}\otimes \ldots \otimes v_{\lambda_{n}^j}$. Since $\{m_{\lambda}\}_{\lambda \in \Lambda'}$ is a basis of  of $\Ho(M)$, we can write $z_j = \sum _{i = 1}^{p_j} a^j_i \mu(m_{\delta^j_i}) + \partial(b_j)$, $a^j_i \in R$, $ b_j \in K^M$. It is easy to see that $z_j \otimes v_{\lambda_{1}^j}\otimes \ldots \otimes v_{\lambda_{n}^j} - \sum _{i = 1}^{p_j} a^j_i c(\delta^j_i, \lambda^j_1, \ldots, \lambda^j_n) - \partial(b_j \otimes v_{\lambda_{1}^j}\otimes \ldots \otimes v_{\lambda_{n}^j}) \in \mathfrak{F}_{n - 1}$. 
So $x \in \mathfrak{S} + B(M \otimes_R G \otimes_R V) + Z(\mathfrak{F}_{n - 1})$. This shows that $Z(\mathfrak{F}_{n}) \subset \mathfrak{S} + B(M \otimes_R G \otimes_R V) + Z(\mathfrak{F}_{n - 1})$. But $Z(\mathfrak{F}_{1}) \subset \mathfrak{S}$. So  $Z(\mathfrak{F}_{n}) \subset \mathfrak{S} + B(M \otimes_R G \otimes_R V)$ by induction on $n$. So $Z(M \otimes_R G \otimes_R V) = \cup_{n \geq 1} Z(\mathfrak{F}_{n}) \subset  \mathfrak{S} + B(M \otimes_R G \otimes_R V)$.  Therefore, $M$ is a Golod module.
\end{proof}

\begin{remark}
The above proposition is a generalisation  of Lemma \ref{p6.2} for Golod modules. The equivalence of $(i)$ and $(ii)$ is implicit in \cite[Theorem 1.1, (iv)]{lev1}. It follows from the fact that the Eilenberg-Moore spectral sequences (see \cite[Proposition 3.2.4]{av}) 
$$E^2_{p, q} = {\Tor}_p^{\Ho(R)}(\Ho(M), k)_q \implies {\Tor}^R_{p + q}(M, k)$$
degenerates at $E^1$ page if Massey products vanish on Koszul homologies. If a basis of Koszul homologies of a local ring admits trivial Massey operations, then an explicit construction of the minimal free resolution of its residue field is available \cite[Theorem 5.2.2]{av}. 
We do not know analogous construction of the minimal free resolution of Golod modules.
\end{remark}

\begin{corollary}\label{p7} 
If $M$ is a Golod module, then the multiplication $\Ho_i(R) \otimes_R \Ho_j(M) \rightarrow \Ho_{i + j}(M)$ is zero for $i \geq 1$.
\end{corollary}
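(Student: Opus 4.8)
The plan is to read the statement off directly from Proposition \ref{p6.3}, since it is precisely the vanishing of the length-two Massey products recorded there. Note first that $\Ho_i(R)$ and $\Ho_j(M)$ are annihilated by $\m$ and hence are $k$-vector spaces, so $\Ho_i(R)\otimes_R\Ho_j(M)=\Ho_i(R)\otimes_k\Ho_j(M)$, and it suffices to prove that $h\cdot m=0$ in $\Ho_{i+j}(M)$ for every $h\in\Ho_i(R)$ with $i\ge 1$ and every $m\in\Ho_j(M)$. Since the multiplication $\Ho(R)\otimes_k\Ho(M)\to\Ho(M)$ is $k$-bilinear, it is in turn enough to check this on the elements of the graded $k$-bases $\mathfrak{b}_R=\{h_\lambda\}_{\lambda\in\Lambda}$ of $\Ho_{\ge 1}(R)$ and $\mathfrak{b}_M=\{m_\lambda\}_{\lambda\in\Lambda'}$ of $\Ho(M)$ furnished by Proposition \ref{p6.3}(iii).

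Next I would specialize the second displayed identity of Proposition \ref{p6.3}(iii) to $p=2$. With the trivial Massey operation $\mu$ provided there one has $\mu(m_{\lambda_1})\in Z(K^M)$ representing $m_{\lambda_1}$, $\mu(h_{\lambda_2})\in Z(K^R)$ representing $h_{\lambda_2}$, and
\[\partial\bigl(\mu(m_{\lambda_1},h_{\lambda_2})\bigr)=\overline{\mu(m_{\lambda_1})}\,\mu(h_{\lambda_2}).\]
Thus the cycle $\overline{\mu(m_{\lambda_1})}\,\mu(h_{\lambda_2})\in Z(K^M)$ is a boundary. Since $\overline{\mu(m_{\lambda_1})}=\pm\mu(m_{\lambda_1})$ and the homology classes of $\mu(m_{\lambda_1})$ and $\mu(h_{\lambda_2})$ are $m_{\lambda_1}$ and $h_{\lambda_2}$, passing to homology gives $\pm h_{\lambda_2}\cdot m_{\lambda_1}=0$ in $\Ho(M)$, i.e. $h_{\lambda_2}\cdot m_{\lambda_1}=0$. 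Bilinearity then yields $\Ho_i(R)\cdot\Ho_j(M)=0$ for all $i\ge 1$, which is the claim.

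I do not expect any real obstacle; the only points needing (minor) care are the reduction to scalars over the residue field and the harmless sign $\overline{(-)}=\pm(-)$ from Definition \ref{p6.1}. If one prefers to avoid choosing bases, the same conclusion follows from Proposition \ref{p6.3}(ii): unwinding Definition \ref{p6.1} for $n=2$, a defining system for $v_1\in\Ho(M)$, $v_2\in\Ho_{\ge 1}(R)$ is simply a pair of cycles $a_{01},a_{12}$ representing $v_1,v_2$, and triviality of $\langle v_1,v_2\rangle$ says precisely that $\overline{a_{01}}a_{12}$ is a boundary, i.e. $v_1v_2=0$ in $\Ho(M)$. Finally, the hypothesis $i\ge 1$ is necessary, since for $i=0$ the map in question is scalar multiplication by $\Ho_0(R)=k$.
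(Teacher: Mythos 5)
Your argument is correct and is exactly how the paper intends the corollary to be read: it is the $n=2$ (equivalently $p=2$) case of the vanishing of Massey products in Proposition \ref{p6.3}(ii)/(iii), where the defining system is just a pair of representing cycles and triviality says their product is a boundary in $K^M$. The reductions you make (to $k$-bases, and the sign $\overline{(-)}=\pm(-)$) are harmless and the conclusion follows as you state.
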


The proof of the following result is inspired by \cite[Lemma 1.6]{lev3}. 
 
\begin{corollary}\label{p7.1} 
Let $M$ be an $R$-module. Let $X \subset Z_{\geq 1}(K^R)$ and $Y \subset Z(K^M)$ be such that $X^2 = 0$, $XY = 0$, $Z_{\geq 1}(K^R ) \subset X + B(K^R)$ and $Z(K^M ) \subset Y +  B(K^M)$. Then $R$ is a Golod ring and $M$ is a Golod $R$-module.
\end{corollary}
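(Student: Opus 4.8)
The plan is to deduce both conclusions at once from Proposition~\ref{p6.3}, by producing $k$-bases of the Koszul homologies of $R$ and of $M$ together with a trivial Massey operation $\mu$ whose value on every tuple of length $\geq 2$ is $0$.

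First I would pass from the sets $X$ and $Y$ to bases. Since $\Ho_{\geq 1}(R)$ and $\Ho(M)$ are finite-dimensional $k$-vector spaces, and since the inclusions $Z_{\geq 1}(K^R)\subseteq X+B(K^R)$ and $Z(K^M)\subseteq Y+B(K^M)$ say that the homology classes of the elements of $X$, respectively of $Y$, span these spaces, I can choose subsets $X'\subseteq X$ and $Y'\subseteq Y$ whose homology classes form $k$-bases $\mathfrak{b}_R=\{h_\lambda\}_{\lambda\in\Lambda}$ of $\Ho_{\geq 1}(R)$ and $\mathfrak{b}_M=\{m_\lambda\}_{\lambda\in\Lambda'}$ of $\Ho(M)$. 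Then $X'\subseteq Z_{\geq 1}(K^R)$, $Y'\subseteq Z(K^M)$, $(X')^2=0$, and $X'Y'=0$.

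Next I would define $\mu$ as in Proposition~\ref{p6.3}(iii): on a one-element tuple let $\mu(h_\lambda)$ be the element of $X'$ with class $h_\lambda$ and $\mu(m_\lambda)$ the element of $Y'$ with class $m_\lambda$ (these are cycles of $K^R$, resp.\ of $K^M$, with the required classes); on every tuple $(h_{\lambda_1},\dots,h_{\lambda_p})$ or $(m_{\lambda_1},h_{\lambda_2},\dots,h_{\lambda_p})$ with $p\geq 2$ set $\mu=0$. To check the two displayed identities of Proposition~\ref{p6.3}(iii), note that their left-hand sides are $\partial(0)=0$. On the right-hand sides, a summand $\overline{\mu(\cdots)}\,\mu(\cdots)$ can be non-zero only when both $\mu$-factors are evaluated at one-element tuples; inspecting the index ranges, this forces the whole tuple to have length $p=2$, and the sum then collapses to the single term $\overline{\mu(h_{\lambda_1})}\,\mu(h_{\lambda_2})$ (first identity) or $\overline{\mu(m_{\lambda_1})}\,\mu(h_{\lambda_2})$ (second identity). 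The former is, up to sign, a product of two elements of $X'$, hence lies in $X^2=0$; the latter is, up to sign, the product of an element of $Y'$ by an element of $X'$, which by graded-commutativity of the $K^R$-module $K^M$ equals, up to sign, an element of $X'Y'\subseteq XY=0$. Hence both identities hold.

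Thus Proposition~\ref{p6.3}, implication (iii)$\Rightarrow$(i), gives that $M$ is a Golod $R$-module, and then implication (i)$\Rightarrow$(ii) (or Theorem~\ref{levingm}) gives that $R$ is Golod; alternatively, the restriction of $\mu$ to $\mathfrak{b}_R$-tuples is a trivial Massey operation in the sense of Lemma~\ref{p6.2}(3). The whole argument is formal once Proposition~\ref{p6.3} is in hand; the only delicate point --- which I regard as the crux --- is the bookkeeping on the right-hand sides of the two Massey identities: one must see that, because $\mu$ kills every tuple of length $\geq 2$, no ``higher'' product can survive, so that precisely the length-two terms remain and these are annihilated exactly by $X^2=0$ and $XY=0$, the passage from $XY$ to $Y'X'$ being where graded-commutativity of the Koszul module structure enters.
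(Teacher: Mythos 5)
Your proposal is correct and follows essentially the same route as the paper: choose bases of $\Ho_{\geq 1}(R)$ and $\Ho(M)$ represented by cycles from $X$ and $Y$, set $\mu=0$ on all tuples of length at least two, and observe that the Massey identities of Proposition \ref{p6.3}(iii) (and Lemma \ref{p6.2}(3)) reduce to the length-two products, which vanish by $X^2=0$ and $XY=0$. The extra bookkeeping you supply (passing to subsets $X'$, $Y'$ and the graded-commutativity remark) only makes explicit what the paper leaves implicit.
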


\begin{proof}
We select a basis $\mathfrak{b}_R = \{h _{\lambda} : h_{\lambda} \in X \}_{\lambda \in \Lambda}$ of $\Ho_{\geq 1}(R)$, and a basis $\mathfrak{b}_M = \{m_{\lambda} : m_{\lambda} \in Y \}_{\lambda \in \Lambda'}$ of $\Ho(M)$. A trivial Massey operation can then be defined by setting $\mu(h_{\lambda_1}, \ldots ,h_{\lambda_p}) = 0, p \geq 2$ and $\mu(m_{\lambda_1}, h_{\lambda_2}, \ldots ,h_{\lambda_p})  = 0, p \geq 2$.
\end{proof}

If $I$ is a quasi-homogeneous ideal in a polynomial algebra $k[X_1, \ldots, X_n]$, we denote by $\partial(I)$ the ideal generated by $\frac{\partial(g)}{\partial(X_j)}$, $g \in I$ and $j = 1, 2, \ldots, n$. By Euler's theorem for quasi-homogeneous polynomials $I \subset \partial(I)$ if $\Ch(k) = 0$.

We are now equipped to prove the main result of this section extending  \cite[Theorem 1.1]{herzog1}.

\begin{theorem}\label{huneke}
Let $S = k[X_1, \ldots, X_n], \deg (X_i) = a_i > 0$ be a graded polynomial ring over a field $k$ of characteristic zero and $M$ be a graded $S$-module. Let $I$ be a proper homogeneous ideal of $S$ such that $\partial(I)^2 \subset I$ and $\partial I \subset \ann M$. Then $M$ is a Golod $S/I$-module.
\end{theorem}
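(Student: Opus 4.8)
The plan is to reduce the statement to Corollary~\ref{p7.1}, applied with $R = S/I$, by producing explicit subsets of the Koszul cycles with trivial products. First observe that, since $\Ch(k)=0$ and $I$ is quasi-homogeneous, Euler's relation gives $I\subseteq\partial(I)$; hence $I\subseteq\partial(I)\subseteq\ann M$, so $M$ is genuinely a module over $R=S/I$ and the conclusion makes sense (here and throughout I pass to the localisation of $S/I$ at its irrelevant maximal ideal, which changes neither the minimal free resolution nor the Koszul homology). Write $\m=(\bar X_1,\dots,\bar X_n)$ for the maximal ideal of $R$. I want to exhibit $X\subseteq Z_{\geq 1}(K^R)$ and $Y\subseteq Z(K^M)$ with $X^2=0$, $XY=0$, $Z_{\geq 1}(K^R)\subseteq X+B(K^R)$ and $Z(K^M)\subseteq Y+B(K^M)$; Corollary~\ref{p7.1} will then give simultaneously that $R$ is Golod (recovering the Herzog--Huneke statement) and, what is wanted, that $M$ is a Golod $R$-module.

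The candidate for $X$ is supplied by Herzog's explicit basis of Koszul homology, Theorem~\ref{pher}. Fix a minimal graded free resolution $(F_\bullet,\phi_\bullet)$ of $S/I$ over $S$ and let $X$ be the $R$-submodule of $Z_{\geq 1}(K^R)$ generated by the cycles displayed in Theorem~\ref{pher} over all homological degrees $l\geq 1$; their classes form a $k$-basis of $\Ho_{\geq 1}(R)$, so $Z_{\geq 1}(K^R)\subseteq X+B(K^R)$ automatically. The decisive point is that every coefficient occurring in these cycles lies in the image $\overline{\partial(I)}=(\partial(I)+I)/I$ of $\partial(I)$ in $R$. Indeed, each coefficient is a $k$-multiple of a Jacobian
\[
\frac{\partial\bigl(\alpha^{(l)}_{j_1,j_2},\,\alpha^{(l-1)}_{j_2,j_3},\,\dots,\,\alpha^{(1)}_{j_l,1}\bigr)}{\partial(X_{i_1},X_{i_2},\dots,X_{i_l})}
\]
reduced modulo $I$, and the row of this determinant indexed by $\alpha^{(1)}_{j_l,1}$ consists of partial derivatives of $\alpha^{(1)}_{j_l,1}$, which is an entry of $\phi_1$ and hence an element of $I$; expanding the determinant along that row writes it as an $S$-linear combination of elements of $\partial(I)$, so the Jacobian itself lies in $\partial(I)$. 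For $Y$ I simply take $Y=Z(K^M)$, so that $Z(K^M)\subseteq Y+B(K^M)$ holds trivially.

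It then remains to verify the two multiplicative conditions. Because elements of $X$ have all their coefficients in $\overline{\partial(I)}$, a wedge product of two such cycles in $K^R$ has coefficients in $\overline{\partial(I)}^{\,2}$, which is the image of $\partial(I)^2$ in $R$ and hence zero since $\partial(I)^2\subseteq I$; thus $X^2=0$. Likewise, for $x\in X$ and $y\in Y=Z(K^M)$ the product $x\cdot y\in K^M$ has coefficients in $\overline{\partial(I)}\cdot M$, which vanishes because $\partial(I)\subseteq\ann M$; thus $XY=0$. All four hypotheses of Corollary~\ref{p7.1} are met, so $M$ is a Golod $S/I$-module. The only step that requires genuine care — and the main obstacle — is the second one: checking that, through the nested summations and the rational normalising constants $c_{j_1,\dots,j_l}$ in the formula of Theorem~\ref{pher}, \emph{every} coefficient of \emph{every} basis cycle really does reduce into $\overline{\partial(I)}$. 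Once this is pinned down by the determinant-expansion observation above (together with the trivial fact that $\partial(I)$ is a $k$-subspace, so $k$-multiples stay inside it), the remaining verifications are purely formal.
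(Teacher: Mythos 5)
Your proposal is correct and follows essentially the same route as the paper: choose the Herzog basis cycles of Theorem \ref{pher} (whose coefficients lie in the image of $\partial(I)$ modulo $I$), note that $\partial(I)^2 \subset I$ and $\partial(I) \subset \ann M$ force $X^2 = 0$ and $X K^M = 0$, and invoke Corollary \ref{p7.1}. Your determinant-expansion justification that each Jacobian lies in $\partial(I)$ simply makes explicit what the paper asserts in one line.
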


\begin{proof}
We choose a set of cycles $X \subset K_{\geq 1}^{S/I}$ whose homology classes form a $k$-basis of $\Ho_{\geq 1}(S/I)$ as given by Theorem \ref{pher}. The cycles in $X$  are linear combination of generators of $\partial(I)$ modulo $I$. If $\partial(I)^2 \subset I \subset \partial I \subset \ann M$, we have $X^2 = 0$ and $X K^M = 0$ since $\partial(I)M = 0$. Therefore, the result follows from Corollary \ref{p7.1}. 
\end{proof}

\begin{corollary}\label{p7.2}
Let $S = k[X_1, \ldots, X_n], \deg (X_i) = a_i > 0$ be a graded polynomial ring over a field $k$ of characteristic zero and $M$ be a graded $S$-module. Let $I$ be a homogeneous ideal of $S$. Then $M/I^iM$ is a Golod module over $S/I^j$ for $i < j$.

\end{corollary}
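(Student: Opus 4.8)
The plan is to obtain Corollary~\ref{p7.2} as a direct instance of Theorem~\ref{huneke}, applied with the ideal $I^j$ in the role of ``$I$'' and the module $M/I^iM$ in the role of ``$M$''. Thus the entire content is the verification of the two hypotheses of Theorem~\ref{huneke} for this pair, namely $\partial(I^j)^2 \subseteq I^j$ and $\partial(I^j) \subseteq \ann(M/I^iM)$, together with the side remark that $M/I^iM$ is genuinely an $S/I^j$-module so that the conclusion is meaningful.

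First I would dispose of the degenerate cases. Since all modules in this paper are nonzero, $M/I^iM \neq 0$ forces $i \geq 1$ and $I$ proper; being homogeneous and proper, $I$ lies in the irrelevant ideal $(X_1,\ldots,X_n)$, so $I^j \subseteq (X_1,\ldots,X_n)^j$ is again a proper homogeneous ideal, as required by Theorem~\ref{huneke}. Combined with the hypothesis $i < j$, the inequality $i \geq 1$ gives $j \geq 2$, which will be used below. The one genuinely computational ingredient is the containment $\partial(I^j) \subseteq I^{j-1}$; this is immediate from the Leibniz rule, since for a generator $f_1\cdots f_j$ of $I^j$ with $f_\ell \in I$ one has $\partial(f_1\cdots f_j)/\partial X_m = \sum_\ell \big(\prod_{\ell'\neq \ell} f_{\ell'}\big)\,\partial f_\ell/\partial X_m \in I^{j-1}$, and $S$-linearity plus the product rule extend this to arbitrary elements of $I^j$.

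Granting $\partial(I^j) \subseteq I^{j-1}$, both hypotheses reduce to bookkeeping with exponents. For the first: $\partial(I^j)^2 \subseteq I^{2(j-1)} = I^{2j-2} \subseteq I^j$, the last inclusion holding because $2j-2 \geq j$ whenever $j \geq 2$. For the second: $i < j$ means $i \leq j-1$, so $I^{j-1}M \subseteq I^iM$, i.e.\ $I^{j-1}$ annihilates $M/I^iM$; hence $\partial(I^j) \subseteq I^{j-1} \subseteq \ann(M/I^iM)$. The same inequality $j \geq i+1$ also yields $I^jM \subseteq I^iM$, so $M/I^iM$ carries an $S/I^j$-module structure. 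Theorem~\ref{huneke}, applied to $I^j$ and $M/I^iM$, then gives that $M/I^iM$ is a Golod $S/I^j$-module.

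I do not expect a real obstacle here; the only point requiring any care is the exponent arithmetic, where one notes that the strict inequality $i < j$ is precisely what makes both $j \geq 2$ (hence $2j-2 \geq j$) and $j-1 \geq i$ available, and that the nonzero-module convention is what excludes the trivial cases $i=0$ and $I=S$.
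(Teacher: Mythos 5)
Your proof is correct and is essentially the paper's own route: the corollary is meant as an immediate application of Theorem~\ref{huneke} to the ideal $I^j$ and the module $M/I^iM$, and your verification that $\partial(I^j)\subseteq I^{j-1}$, hence $\partial(I^j)^2\subseteq I^{2j-2}\subseteq I^j$ (using $j\geq 2$) and $\partial(I^j)\subseteq I^{j-1}\subseteq \ann(M/I^iM)$ (using $i\leq j-1$), is exactly the required check.
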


\section{Algebra retracts}
We begin with the following lemma which shows that the Koszul complex of an algebra retrarct is a retract of DG algebras. 

\begin{lemma}\label{p2}
Let  $j : (R, \mathfrak{m}) \rightarrow (A, \mathfrak{n})$ be an algebra retract with a section $p: (A, \mathfrak{n}) \rightarrow (R, \mathfrak{m})$. Then the induced map on Koszul complexes $j' : K^R \rightarrow K^A$ is a retract of DG algebras. 
\end{lemma}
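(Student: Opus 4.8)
The plan is to choose compatible minimal systems of generators for $\m$ and $\n$, build the evident DG algebra map between the corresponding Koszul complexes, and then exhibit a DG algebra section coming from $p$.

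First I would pass to cotangent spaces. Since $j$ and $p$ are local, they induce $k$-linear maps $j_{*} : \m/\m^2 \to \n/\n^2$ and $p_{*} : \n/\n^2 \to \m/\m^2$, and $p\circ j = \mathrm{id}_{R}$ forces $p_{*}\circ j_{*} = \mathrm{id}$; in particular $j_{*}$ is injective. Fix a minimal system of generators $x_1,\dots,x_r$ of $\m$. Then the images of $j(x_1),\dots,j(x_r)$ in $\n/\n^2$ are $k$-linearly independent, so by Nakayama they extend to a minimal system of generators $j(x_1),\dots,j(x_r),y_1,\dots,y_s$ of $\n$. We realize $K^{R} = \Lambda_{R}\!\big(\bigoplus_{i=1}^{r} R e_i\big)$ with $\partial e_i = x_i$, and $K^{A} = \Lambda_{A}\!\big(\bigoplus_{i=1}^{r} A f_i \oplus \bigoplus_{l=1}^{s} A g_l\big)$ with $\partial f_i = j(x_i)$ and $\partial g_l = y_l$; these are Koszul complexes on minimal systems of generators of $\m$ and $\n$ respectively, hence serve as $K^{R}$ and $K^{A}$. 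Define $j' : K^{R} \to K^{A}$ to be $j$ in degree $0$ and $e_i \mapsto f_i$; by the universal property of the exterior algebra this extends uniquely to a homomorphism of graded rings over $j$, and it is a chain map because $j'(\partial e_i) = j(x_i) = \partial f_i = \partial(j'e_i)$ while $\partial$ vanishes in degree $0$.

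Next I would construct the section $p'$. As $p$ is local, $p(y_l) \in \m$, so write $p(y_l) = \sum_{i=1}^{r} r_{li} x_i$ with $r_{li} \in R$. Let $p' : K^{A} \to K^{R}$ be $p$ in degree $0$, $f_i \mapsto e_i$, and $g_l \mapsto \sum_{i=1}^{r} r_{li} e_i$; again this extends uniquely to a homomorphism of graded rings over $p$. It is a chain map: $p'(\partial f_i) = p(j(x_i)) = x_i = \partial e_i = \partial(p'f_i)$ and $p'(\partial g_l) = p(y_l) = \sum_{i} r_{li} x_i = \partial\big(\sum_{i} r_{li} e_i\big) = \partial(p'g_l)$, and $\partial$ is zero in degree $0$. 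Finally $p' \circ j'$ is a graded ring endomorphism of $K^{R}$ that equals $p\circ j = \mathrm{id}_{R}$ in degree $0$ and sends $e_i \mapsto f_i \mapsto e_i$; since $R$ and the $e_i$ generate $K^{R}$ as a ring, $p'\circ j' = \mathrm{id}_{K^{R}}$. Hence $j'$ is a retract of DG algebras with section $p'$.

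I expect the only delicate point to be the compatibility of the chosen generating sets, namely that the actual elements $j(x_i)$ — not merely abstractly isomorphic ones — may be taken as part of a minimal system of generators of $\n$; this is exactly what the injectivity of $j_{*}$ on cotangent spaces provides, and it is what legitimizes the assignment $e_i\mapsto f_i$ and makes the composite $p'\circ j'$ the identity. The remaining verifications are routine checks that the assignments on the finitely many algebra generators of $K^{R}$ and $K^{A}$ respect products and commute with differentials.
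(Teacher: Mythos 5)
Your proof is correct and takes essentially the same approach as the paper: extend a minimal generating set of $\m$, pushed forward along $j$, to a minimal generating set of $\n$ (using injectivity of $j$ on cotangent spaces, i.e.\ Nakayama), map the Koszul variables accordingly to get $j'$, and exhibit an explicit DG algebra section over $p$. The only difference is cosmetic: the paper uses the decomposition $A = R \oplus \ker(p)$ to choose the extra generators $h_j$ inside $\ker(p)$, so that its section is just the quotient of $K^A$ by the DG ideal $(f_j, h_j)$, whereas you allow arbitrary extra generators $y_l$ and compensate by sending $g_l$ to a degree-one element $\sum_i r_{li} e_i$ with boundary $p(y_l)$ --- an equally valid, routine verification.
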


\begin{proof}
It is easy to see that $A = R \oplus M$ for $M =  \ker(p)$, $\mathfrak{n} = \mathfrak{m} \oplus M$, $j(r) = (r, 0), p(r, m) = r$. The product is defined by 
$$(r_1, m_1)(r_2, m_2) = (r_1r_2, r_1m_2 + r_2m_1 + m_1m_2).$$

Note that $\mathfrak{n}/\mathfrak{n}^2 = \mathfrak{m}/\mathfrak{m}^2 \oplus M/\mathfrak{n}M$. Let $\mathfrak{m}$ be minimally generated by $\{g_1, g_2, \ldots, g_m\}$. Then this set can be extended to a minimal generating set of $\mathfrak{n}$ viz. $\{g_1, g_2, \ldots, g_m, h_1, h_2, \ldots, h_n \}$ for  $h_j \in M$. We have $M = \mathfrak{n}M + Ah_1 + Ah_2+ \ldots + Ah_n$. By Nakayama's lemma  we have $M = Ah_1 + Ah_2+ \ldots + Ah_n$.

Let $K^R = R\langle e_i | \partial(e_i) = g_i \rangle$, $K^A = A\langle e_i, f_j | \partial(e_i) = g_i, \partial(f_j) = h_j \rangle$ be  Koszul complexes of $R$ and $A$ respectively. The map $j$ will induce a DG $R$-algebra homomorphism $j' : K^R \rightarrow K^A$ sending $e_i$ to $e_i$. Note that the ideal $(f_j, h_j)$ is a DG ideal of $K^A$ and $K^A/(f_j, h_j) \cong K^R$. Therefore, we have a quotient map $p' : K^A \rightarrow K^R$. It is easy to see that the composition $K^R \xrightarrow{j'} K^A  \xrightarrow{p'} K^R$ is the identity. 
\end{proof} 

The following result shows that the Golod property descends along an algebra retract. 
\begin{theorem}\label{p6}
Let $j : (R, \mathfrak{m}) \rightarrow (A, \mathfrak{n})$ be an algebra retract with a section $p: (A, \mathfrak{n}) \rightarrow (R, \mathfrak{m})$. Let $M$ be a finitely generated $R$-module which is Golod when viewed as an $A$-module via the homomorphism $p$. Then $M$ is also a Golod $R$-module.
\end{theorem}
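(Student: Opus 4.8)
The plan is to combine the Massey-product criterion for Golod modules (Proposition~\ref{p6.3}) with the fact that the Koszul complex of an algebra retract is itself a DG-algebra retract (Lemma~\ref{p2}); the point is to transport vanishing of Massey products from $A$ down to $R$ along the retraction. Write $K^M_R$ and $K^M_A$ for the Koszul complexes of $M$ over $R$ and over $A$, and $I=\ker(p)$, so that $A=R\oplus I$ as $R$-modules and $IM=0$ because the $A$-action on $M$ factors through $p$.

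First I would upgrade the retract of Koszul complexes to module level. Picking a minimal generating set of $\m$ and extending it to one of $\n$ as in the proof of Lemma~\ref{p2} gives $K^A\cong K^R\otimes_R A\langle f_1,\dots,f_n\rangle$ (with $\partial f_j=h_j\in I$), under which the maps $j':K^R\to K^A$ and $p':K^A\to K^R$ of Lemma~\ref{p2} are the inclusion $K^R\otimes_R R\hookrightarrow K^R\otimes_R A\langle f\rangle$ and the map induced by $A\langle f\rangle\twoheadrightarrow A/I=R$. Tensoring with $M$ over $A$ and using $IM=0$ gives a natural identification
\[ K^M_A=K^A\otimes_A M\;\cong\;K^M_R\otimes_R\Lambda_R\big(Rf_1\oplus\cdots\oplus Rf_n\big) \]
with zero differential on the exterior factor. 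Let $j'_M:K^M_R\to K^M_A$ be the inclusion of $K^M_R\otimes_R R$ and $p'_M:K^M_A\to K^M_R$ the projection onto it. Then $j'_M$ is a morphism of DG modules over $j'$, $p'_M$ is a morphism of DG modules over $p'$, $p'_M\circ j'_M=\mathrm{id}$, and $\Ho(j')$ carries $\Ho_{\ge1}(K^R)$ into $\Ho_{\ge1}(K^A)$.

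Now I apply Proposition~\ref{p6.3} to the local ring $A$ and its module $M$: since $M$ is a Golod $A$-module, $A$ is a Golod ring, so by Lemma~\ref{p6.2} every Massey product $\langle v_1,\dots,v_n\rangle$ with all $v_i\in\Ho_{\ge1}(K^A)$ is trivial, and moreover every Massey product $\langle v_1,\dots,v_n\rangle$ with $v_1\in\Ho(K^M_A)$ and $v_2,\dots,v_n\in\Ho_{\ge1}(K^A)$ is trivial. The core step is to push this triviality down to $R$. Let $\{a_{ij}\}$ be a defining system over $K^R$ for a Massey product $\langle v_1,\dots,v_n\rangle$ — either in the ring case (all $a_{ij}\in K^R$, all $v_i\in\Ho_{\ge1}(K^R)$) or in the module case ($a_{0j}\in K^M_R$ for $j<n$, all other $a_{ij}\in K^R$, with $v_1\in\Ho(K^M_R)$ and $v_2,\dots,v_n\in\Ho_{\ge1}(K^R)$). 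Applying $j'$, and $j'_M$ to the module entries, produces a family $\{a'_{ij}\}$ in $K^A$ (resp. $K^M_A$); since $j'$ is a DG-algebra morphism and $j'_M$ a DG-module morphism over it, $\{a'_{ij}\}$ is again a defining system, now for the images of the $v_i$ under $\Ho(j')$ and $\Ho(j'_M)$, and these images satisfy the hypotheses recorded above. Hence the relevant triviality over $A$ yields $b$ (in $K^A$, resp. $K^M_A$) with $\partial(b)=\sum_k\overline{a'_{0k}}\,a'_{kn}$. Applying $p'$ (and $p'_M$ to the module entries) and using $p'\circ j'=\mathrm{id}$, $p'_M\circ j'_M=\mathrm{id}$, so that $a'_{ij}$ is sent back to $a_{ij}$, we obtain
\[ \partial\big(p'(b)\big)=\sum_k\overline{a_{0k}}\,a_{kn}, \]
so $p'(b)$ (resp. $p'_M(b)$) completes the defining system for $\langle v_1,\dots,v_n\rangle$. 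Thus $K^R$ admits trivial Massey products, so $R$ is a Golod ring by Lemma~\ref{p6.2}, and every Massey product $\langle v_1,\dots,v_n\rangle$ with $v_1\in\Ho(K^M_R)$ and $v_2,\dots,v_n\in\Ho_{\ge1}(K^R)$ is trivial; Proposition~\ref{p6.3} then gives that $M$ is a Golod $R$-module.

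The conceptual content is short: lift a defining system along the section on Koszul complexes, solve it upstairs where $M$ is Golod over $A$, and carry the solution back down by the retraction. I expect the only real work to be the bookkeeping — constructing $j'_M$ and $p'_M$ and checking they are DG-module morphisms compatible with $j'$ and $p'$ and satisfy $p'_M j'_M=\mathrm{id}$, and verifying that the entrywise image of a defining system is again a defining system of the required type. This last check amounts to tracking the class conditions, the relations $\partial(a_{ij})=\sum_k\overline{a_{ik}}\,a_{kj}$, the sign operator $\overline{(\,\cdot\,)}$, and the left/right module conventions of Definition~\ref{p6.1}; it is routine precisely because $j'$, $j'_M$, $p'$, $p'_M$ are DG (module) morphisms.
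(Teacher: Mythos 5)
Your argument is correct, but it takes a genuinely different route from the paper. The paper proves this theorem by combining Herzog's result (Theorem \ref{p5}) that $F_*\otimes_R G_*$ is the minimal $A$-free resolution of $M$ with the spectral-sequence characterisation of Golod modules (Theorem \ref{p4}, Remark \ref{p4s}): tensoring the resolutions with the Koszul retract of Lemma \ref{p2} gives morphisms of spectral sequences $\alpha,\beta$ with $\beta\circ\alpha=\mathrm{id}$, so the conditions $d^r_{pq}=0$ ($r\ge 2$, $q>0$) and $E^\infty_{pq}=0$ ($q>0$), which hold for the $A$-sequence because $M$ is Golod over $A$, are inherited by the $R$-sequence. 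You instead work entirely with the Massey-product criterion (Lemma \ref{p6.2}, Proposition \ref{p6.3}): you extend the DG retract $j',p'$ of Lemma \ref{p2} to the module level using $IM=0$ and the identification $K^A\otimes_A M\cong K^M\otimes_R\Lambda(\bigoplus_s Rf_s)$, push an arbitrary defining system over $R$ up along $j'$, solve it over $A$ where all relevant Massey products vanish, and carry the solution back by $p'$. Your verifications (that $j'_M,p'_M$ are DG-module morphisms over $j',p'$ with $p'_M j'_M=\mathrm{id}$, and that entrywise images of defining systems are defining systems) are the right ones and go through; the module-level Koszul identification you use is exactly the one the paper itself exploits later, and your push-down of defining systems is close in spirit to the converse half of the proof of Theorem \ref{m2}, where defining systems are transported by wedging with $Y_1\wedge\cdots\wedge Y_n$. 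What each approach buys: the paper's proof is shorter once Theorem \ref{p5} and Theorem \ref{p4} are available and requires no DG-module bookkeeping, while yours avoids both Herzog's minimal-resolution theorem and the spectral-sequence machinery, is self-contained modulo Proposition \ref{p6.3}, and delivers the Golodness of $R$ directly (rather than via Theorem \ref{levingm}).
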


\begin{proof}
Let $F_{*} \rightarrow R$ be a minimal $A$-free resolution of $R$ and $G_{*} \rightarrow M$ be a minimal $R$-free resolution of $M$. Then $H_* = F_{*} \otimes_R G_{*}  \rightarrow M$ is a minimal $A$-free resolution of $M$ by Theorem \ref{p5}. So we have morphisms of complexes $f : G_* \rightarrow H_*$ and $g : H_* \rightarrow G_*$ such that the composition $g \circ f = id$. By Lemma \ref{p2} we have maps on Koszul complexes viz. $j' : K^R \rightarrow K^A$ and $p' : K^A \rightarrow K^R$ such that the composition $p' \circ j' = id$. So the composition of maps of double complexes $G_* \otimes_R K^R  \xrightarrow{f \otimes j'} H_* \otimes_A K^A \xrightarrow{g \otimes p'} G_* \otimes_R K^R$ is the identity.

Let $\{{}^RE^r_{pq}, {}^Rd^r_{pq}\}$, $\{{}^AE^r_{pq}, {}^Ad^r_{pq}\}$ be homology spectral sequences corresponding to the double complexes $G_* \otimes_R K^R, H_* \otimes_A K^A$ respectively. Then we have morphisms of spectral sequences $\alpha^r_{pq} : {}^RE^r_{pq} \rightarrow {}^AE^r_{pq}$ and $\beta^r_{pq} : {}^AE^r_{pq} \rightarrow  {}^RE^r_{pq}$ such that the composition $\beta^r_{pq} \circ \alpha^r_{pq} = id$. 

We have ${}^AE^2_{pq} = \Tor^A_p(M, \Ho_q(A)) \underset{p}{\Rightarrow} \Ho_{p + q}(A;M)$. Since $M$ is a Golod $A$-module, $\{{}^AE^r_{pq}, {}^Ad^r_{pq}\}$ satisfies conditions of Theorem \ref{p4} by Remark \ref{p4s}. So $\{{}^RE^r_{pq}, {}^Rd^r_{pq}\}$ will also satisfy the same. Hence $M$ is also a Golod $R$-module by Remark \ref{p4s}.
\end{proof}

The method of proof of the following lemma is inspired by the work of Herzog in \cite{her1}.
\begin{lemma}\label{m1}
Let $j:(R, \m) \rightarrow (A, \n)$ be an algebra retract which admits sections (possibly equal) $p_1, p_2$. Let $\ker(p_1) = I_1$ and $\ker(p_2) = I_2$ satisfy $I_1I_2 = 0$. Let $N$ be an $R$-module. If we consider $N$ as an $A$-module via any of the maps $p_i$'s, then $N$  is a $j$-Golod module and 
\[P^A_N(t) = \frac{P^R_N(t)}{1 - t(P^R_A(t) - 1)} = \frac{P^R_N(t)}{1 - tP^R_{I_i}(t)} \ ,i = 1, 2.\] 
\end{lemma}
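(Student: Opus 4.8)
The strategy is to strip off the formal parts and reduce the whole statement to the single assertion that $j\colon R\to A$ is a \emph{Golod homomorphism}, and then to establish that by an explicit resolution in the spirit of Herzog \cite{her1}. By the symmetry of the hypotheses in $(p_1,I_1)\leftrightarrow(p_2,I_2)$ it is enough to work with $p_1$ and $I_1$. First, since $p_1$ splits $j$ we have $A=j(R)\oplus I_1$ as $R$-modules, so $P^R_A(t)=1+P^R_{I_1}(t)$, and $t(P^R_A(t)-1)=tP^R_{I_1}(t)$; this already reconciles the two right-hand sides. By Theorem \ref{p5}, $P^A_N(t)=P^A_R(t)\,P^R_N(t)$ for $N$ regarded over $A$ through $p_1$. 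Finally, applying functoriality of the change-of-rings map on $\Tor$ to the composite $R\xrightarrow{\,j\,}A\xrightarrow{\,p_1\,}R=\mathrm{id}_R$ shows that $j_*\colon\Tor^R(N,k)\to\Tor^A(N,k)$ is split injective, with retraction $(p_1)_*$. Hence by Theorem \ref{levingm} the $A$-module $N$ is $j$-Golod the moment $j$ is a Golod homomorphism; and once $j$ is Golod, the case $N=k$ of Theorem \ref{p5} gives $P^A_R(t)\,P^R_k(t)=P^R_k(t)/(1-t(P^R_A(t)-1))$, so $P^A_R(t)=1/(1-t(P^R_A(t)-1))$ (as $P^R_k\in\ZZ[|t|]$ is a unit), and then $P^A_N(t)=P^A_R(t)P^R_N(t)=P^R_N(t)/(1-tP^R_{I_1}(t))$ for every $N$. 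So everything reduces to showing $j$ is a Golod homomorphism.

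\textbf{Using the hypotheses.} The role of having \emph{two} sections together with $I_1I_2=0$ is to supply the estimate $I_1^2\subseteq\m I_1$: writing $\n=j(\m)\oplus I_1=j(\m)\oplus I_2$, one gets $\n^2\subseteq j(\m)^2+j(\m)I_1+j(\m)I_2+I_1I_2\subseteq\m A$, and intersecting with $I_1$ (using $I_1\cap j(R)=0$) yields $I_1^2\subseteq\m I_1$. From $\n^2\subseteq\m A$ the augmentation ideal $\n/\m A$ of $A\otimes_R k$ squares to zero, which is the degree-zero part of the requirement $\n\,\ITor^R(A,k)=0$; for the higher degrees one uses that $I_1I_2=0$ makes $I_1$ a module over $A/I_2\cong R$ recovering its original $R$-structure, so a minimal $R$-free resolution of $I_1$ carries an $A$-action under which $I_1$ acts through $p_2(I_1)\subseteq\m$, and therefore $\ITor^R_0(A,k)$ annihilates $\Tor^R_{\ge1}(A,k)=\Tor^R_{\ge1}(I_1,k)$. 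For the Poincaré identity I would imitate the Golod construction of \cite[Theorem 5.2.2]{av}: starting from a minimal $R$-free resolution $(G_\bullet,\partial)\to I_1$, form $\mathcal R_\bullet=A\otimes_R\bigl(\bigoplus_{n\ge0}\Sigma^n G_\bullet^{\otimes n}\bigr)$ (tensor powers over $R$, shift $\Sigma$, with $G_\bullet^{\otimes 0}=R$ in degree $0$), equipped with a ``Golod-type'' differential whose only nonobvious terms come from the $R$-bilinear multiplication $I_1\otimes_R I_1\to I_1$. Since that multiplication lands in $\m I_1$, all correction terms lie in $\m\mathcal R_\bullet$, so $\mathcal R_\bullet$ is minimal, while acyclicity of $\mathcal R_\bullet\to A/I_1=R$ is checked by the same filtration and long-exact-sequence bookkeeping used in the proof of Proposition \ref{p6.3}. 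Reading off ranks gives $P^A_R(t)=\sum_{n\ge0}\bigl(tP^R_{I_1}(t)\bigr)^n=1/(1-tP^R_{I_1}(t))$, so $j$ is Golod and the lemma follows.

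\textbf{Main obstacle.} The delicate step is making $\mathcal R_\bullet$ serve simultaneously as a resolution and as a \emph{minimal} one. In the residue-field case the multiplication being killed is literally zero, whereas here the multiplication on $I_1$ vanishes only modulo $\m$, so the estimate $I_1^2\subseteq\m I_1$ has to be threaded carefully through both the verification that $\partial^2=0$ (associativity in $A$ is needed to cancel the cross terms) and the minimality check; likewise one must choose the $A$-module structure on $G_\bullet$ so that $I_1$ really acts inside $\m G_\bullet$. An alternative, perhaps cleaner, packaging of this step uses Lemma \ref{p2}: $K^R\to K^A$ is a DG-algebra retract, and one shows the relevant Massey products on $\Ho(K^A)$ over $\Ho(K^R)$ vanish by exactly the argument of Corollary \ref{p7.1}, with $I_1^2\subseteq\m I_1$ and $I_1I_2=0$ playing the roles of the hypotheses $X^2=0$ and $XY=0$ there.
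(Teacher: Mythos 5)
Your opening reduction is fine: using $p_1\circ j=\mathrm{id}$ to split $j_*\colon\Tor^R(N,k)\to\Tor^A(N,k)$ and then invoking Theorem \ref{levingm} correctly reduces everything to showing that $j$ is a Golod homomorphism, and the auxiliary facts $P^R_A=1+P^R_{I_i}$, $I_1^2\subseteq\m I_1$ and $\n\,\ITor^R(A,k)=0$ are essentially right. The gap is in the one hard step, the identity $P^A_{R}(t)=1/(1-tP^R_{I_1}(t))$. The complex $\mathcal{R}_\bullet=A\otimes_R\bigl(\bigoplus_n\Sigma^nG^{\otimes n}\bigr)$ cannot be made a complex by adding only correction terms coming from a chain-level lift of the multiplication $I_1\otimes_RI_1\to I_1$: already on words of length three, $d^2=0$ forces ternary corrections, and in general one needs an infinite coherent tower of higher (Massey-type) correction maps. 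Producing that tower, showing all its components land in $\n\mathcal{R}_\bullet$ so that the resolution is \emph{minimal} (minimality is indispensable: the inequality in Definition \ref{p3} already gives the upper bound, so a non-minimal resolution proves nothing new), and verifying acyclicity is precisely the content of the lemma; none of it follows from $I_1^2\subseteq\m I_1$, and even the degree-zero choice of the lift inside $\m G_0$ does not propagate automatically to higher syzygies. Moreover, the structural fact that actually makes the computation work — that $I_1I_2=0$ forces the $A$-module structure on $I_1$ to be the pullback along $p_2$ of its $R$-structure (you notice this when checking $\n\,\ITor^R(A,k)=0$, but never use it where it matters) — is absent from your construction. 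The paper exploits it via Theorem \ref{p5}: the minimal $A$-free resolution of $I_1$ is $F^{(2)}\otimes_RG$ with $F^{(2)}$ the minimal $A$-resolution of $R$ viewed via $p_2$, so the exact sequence $0\to I_1\to A\to R\to0$ gives $P^A_{R,p_1}=1+tP\,P^A_{R,p_2}$ and symmetrically $P^A_{R,p_2}=1+tP\,P^A_{R,p_1}$, and solving the two equations yields $1/(1-tP)$. Note also that when $p_1\neq p_2$ (e.g.\ the fibre product case, where $I_1^2\neq0$) the natural iterated construction alternates resolutions of $I_1$ and $I_2$; powers of $G$ alone, glued by the multiplication of $I_1$, are not the structure the hypotheses provide.

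The fallback in your last paragraph does not work either. Corollary \ref{p7.1} requires the literal vanishing $X^2=0$, $XY=0$ of products of chosen cycles, and its conclusion is that $A$ is a Golod \emph{ring}; but under the hypotheses of the lemma $A$ need not be Golod at all (take $A=R\lJoin k$ with $R$ non-Golod: the lemma applies, yet Golodness of $A$ would force Golodness of $R$). The lemma asserts relative ($j$-)Golodness, whose obstructions live on $\Tor^R(A,k)$, the fibre of $j$, not on $\Ho(K^A)$; and $I_1^2\subseteq\m I_1$ is not a substitute for $X^2=0$, since products of cycles lying in $\m$ times cycles need not bound and in any case do not supply a trivial Massey operation.
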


\begin{proof}
The composition $R \xrightarrow{j} A \xrightarrow{p_{i}} R$ is the identity. So we have a decomposition $A = R \oplus I_i$  as an $R$-module for $i = 1,2$. Since $I_i$'s are finite $A$-modules and $I_1I_2 = 0$, $I_i$'s are so as $R$-modules. We have $P^R_A(t) = 1 + P^R_{I_i}(t)$ for $i = 1, 2$. So $P^R_{I_1}(t) = P^R_{I_2}(t) = P$ for some $P \in \ZZ[|t|]$. We also have $\ITor^R(A, k) = \Tor^R(I_1, k) = \Tor^R(I_2, k)$. Since 
$I_1I_2 = 0$, we have $\n \ITor^R(A, k) = 0$.

Let $R_i$ denote the $A$-module whose underlying set is $R$ and the multiplication be defined by $a\cdot r = p_i (a) r$, $a \in A, r \in R_i = R$. Without loss of generality, we assume $N$ as an $A$-module via the map $p_1$. By Theorem \ref{p5} we have $P^A_N(t) = P^A_{R_1}(t) P^R_N(t)$. So it is enough to show that $P^A_{R_1}(t) = \frac{1}{1 - tP}$.

We have the following short exact sequence of $A$-modules.
\[ 0 \rightarrow I_1 \rightarrow A \xrightarrow{p_1} R_1 \rightarrow 0.\]
So we have $P^A_{R_1}(t) = 1 + tP^A_{I_1}(t)$. Since $A = R \oplus I_{2}$ and $I_1I_2 = 0$, the $A$-module structure on $I_1$ is coming from an $R$-module structure of $I_1$ via the map $p_2$. So $P^A_{I_1}(t) = P^A_{R_2}(t)P^R_{I_1}(t)$ and 
$P^A_{R_1}(t) = 1 + tP^R_{I_1}(t)P^A_{R_2}(t) = 1 + tPP^A_{R_2}(t)$.

If $p_1 = p_2$, we have $R_1 = R_2 = R$ as $A$-module. So we have $P^A_{R_1}(t) = 1 + tPP^A_{R_1}(t)$ and $P^A_{R_1}(t) = \frac{1}{1 - tP}$.

Otherwise by a similar argument we have $P^A_{R_2}(t) = 1 + tPP^A_{R_1}(t)$. Now
\begin{align*}
P^A_{R_1}(t)
& =  1 + tPP^A_{R_{2}}(t)\\
& = 1 + tP + t^2P^2 P^A_{R_{1}}(t), \ \text{since}\; P^A_{R_2}(t) = 1 + tPP^A_{R_{1}}(t)
\end{align*}
Therefore, we have 
$$P^A_{R_1}(t) = \frac{1 + tP}{1 - t^2P^{2}} = \frac{1}{1 - tP}.$$
\end{proof}

\begin{lemma}\label{m2.5}
Let $j:(R, \m) \rightarrow (A, \n)$ be an algebra retract which admits sections (possibly equal) $p, p'$. Let $\ker(p) = I$ and $\ker(p') = I'$ satisfy $II' = 0$. Consider $R$-module structures on $I$, $I'$ via the map $j$. Then we have

\begin{enumerate}
\item Both the ideals $I$ and $I'$ have the same Koszul polynomials i.e. $\kappa_I(t) = \kappa_{I'}(t)$.

\item $P^R_I(t) = P^R_{I'}(t)$.

\item $\mu_A(\n) = \mu_R(\m) + \mu_R(I) =  \mu_R(\m) + \mu_R(I')$. 
Here $\mu(-)$ denote the minimal number of generators.

\end{enumerate}
\end{lemma}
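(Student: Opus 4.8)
The plan is to reduce all three statements to the single structural fact that, because $p$ and $p'$ are sections of $j$, the ring $A$ splits as an $R$-module (with $R$ acting through $j$) in two ways, $A \cong R \oplus I$ and $A \cong R \oplus I'$. Parts (1) and (2) then need nothing more: the Koszul polynomial $\kappa^R_{(-)}(t)$ of an $R$-module is additive on finite direct sums because $K^R \otimes_R (-)$ commutes with $\oplus$, and the Poincar\'e series $P^R_{(-)}(t)$ is additive because $\Tor^R(k,-)$ does. So $\kappa^R_A(t) = \kappa_R(t) + \kappa_I(t) = \kappa_R(t) + \kappa_{I'}(t)$ forces $\kappa_I(t) = \kappa_{I'}(t)$, and $P^R_A(t) = 1 + P^R_I(t) = 1 + P^R_{I'}(t)$ forces $P^R_I(t) = P^R_{I'}(t)$; the latter is in fact already recorded inside the proof of Lemma \ref{m1}.

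For part (3) I would start from Lemma \ref{p2}, which gives the $R$-module decomposition $\n = j(\m) \oplus I$ together with $\n/\n^2 \cong \m/\m^2 \oplus I/\n I$; reading off $k$-dimensions yields $\mu_A(\n) = \mu_R(\m) + \dim_k(I/\n I)$. What remains is to identify $\dim_k(I/\n I)$, the minimal number of generators of $I$ as an $A$-module, with $\mu_R(I) = \dim_k(I/\m I)$, its minimal number of generators as an $R$-module via $j$. Since $\n I = j(\m)I + I^2$, this amounts to the containment $I^2 \subseteq j(\m)I$, and this is where the hypothesis $II' = 0$ is used: for $x, y \in I$ one has $y = j(p'(y)) + y''$ with $y'' := y - j(p'(y)) \in \ker(p') = I'$ (using $p' \circ j = \mathrm{id}$) and $p'(y) \in \m$ (since $I \subseteq \n$ and $p'$ is local); hence $xy = j(p'(y))\,x + x y''$ with $x y'' \in II' = 0$, so $xy = j(p'(y))\,x \in j(\m)I$. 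Thus $\n I = j(\m)I = \m \cdot I$, giving $\dim_k(I/\n I) = \mu_R(I)$ and $\mu_A(\n) = \mu_R(\m) + \mu_R(I)$; the remaining equality $\mu_R(I) = \mu_R(I')$ is just the comparison of constant terms in part (2).

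The only step that is not purely formal is the containment $I^2 \subseteq j(\m)I$, but as sketched above it is a two-line consequence of $II' = 0$ and $p' \circ j = \mathrm{id}$, so I do not expect a genuine obstacle. The one thing to keep careful track of is that $I$ and $I'$ must always be given the $R$-module structure induced by $j$ — this is what makes the splittings $A = R \oplus I = R \oplus I'$ $R$-linear and makes $\m \cdot I = j(\m)I$ — which is precisely the convention imposed in the statement.
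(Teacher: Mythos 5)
Your proposal is correct and follows essentially the same route as the paper: parts (1) and (2) come from additivity of $\kappa^R_{(-)}(t)$ and $P^R_{(-)}(t)$ over the $R$-module splittings $A = R \oplus I = R \oplus I'$, and part (3) from $\n I = \m I$ (your element-wise verification of $I^2 \subseteq j(\m)I$ is exactly the paper's use of $\n = \m \oplus I'$ together with $II' = 0$), which gives $\n/\n^2 \cong \m/\m^2 \oplus I/\m I$. The only detail the paper records that you leave implicit is that $II' = 0$ also forces $I$ and $I'$ to be module-finite over $R$ via $j$ (write $I = \sum A h_i$ and use $A = j(R) \oplus I'$, $I'I = 0$), which is what makes $\kappa_I(t)$, $P^R_I(t)$ and $P^R_A(t)$ well defined in the first place.
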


\begin{proof}
We have the decomposition $A = R \oplus I = R \oplus I'$ as $R$-modules. Since $I_i$'s are finite $A$-modules and $I_1I_2 = 0$, $I_i$'s are so as $R$-modules. We have $K^R \oplus K^I = K^R \oplus K^{I'}$ and $\Ho_i(R) \oplus \Ho_i(I) = \Ho_i(R) \oplus \Ho_i(I')$ for all $i$. This gives $\dim_k \Ho_i(I) = \dim_k \Ho_i(I')$ for all $i$ and therefore $\kappa_I^R(t) = \kappa_{I'}^R(t)$. This proves (1).

We have $P^R_A(t) = 1 + P^R_I(t) = 1 + P^R_{I'}(t)$. So $P^R_I(t) = P^R_{I'}(t)$. This proves (2).

Now we prove (3). We have $\n = \m \oplus I = \m \oplus I'$. Since $II' = 0$, we have $I^2 \subset \n I \subset \m I$ and $\n^2 = \m^2 \oplus (\m I + I^2) = \m^2 \oplus \m I$. Therefore, $\n/ \n^2 = \m/\m^2 \oplus I/ \m I$ as $k = R/ \m$ vector space. So $\mu_A(\n) = \mu_R(\m) + \mu_R(I)$. The other equality for $I'$ follows similarly. 
\end{proof}

Now we are in a position to prove the main result of this section.
\begin{theorem}\label{m2}
Let $j:(R, \m) \rightarrow (A, \n)$ be an algebra retract which admits sections (possibly equal) $p$ and $p'$. Let $\ker(p) = I$ and $\ker(p') = I'$ satisfy $II' = 0$. 
Consider $R$-module structures on $I$, $I'$ via the retract map $j$.
Then the ideal $I$ is a Golod $R$-module if and only if $I'$ is so. 

Let $N$ be an $R$-module. If we consider $N$ as an $A$-module via any of the maps $p, p'$, then $N$ is a Golod $A$-module if and only if $N$ is a Golod $R$-module and $I$ (equivalently $I'$) is a Golod $R$-module. In particular, $A$ is a Golod ring if and only if $I$ (equivalently $I'$) is a Golod $R$-module.
\end{theorem}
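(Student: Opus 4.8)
The first assertion is immediate from Lemma~\ref{m2.5}: it gives $\kappa^{R}_{I}(t)=\kappa^{R}_{I'}(t)$ and $P^{R}_{I}(t)=P^{R}_{I'}(t)$, while $\kappa_{R}(t)$ is common to both, so the Serre bounds $\kappa^{R}_{I}(t)/\bigl(1-t(\kappa_{R}(t)-1)\bigr)$ and $\kappa^{R}_{I'}(t)/\bigl(1-t(\kappa_{R}(t)-1)\bigr)$ coincide; hence $P^{R}_{I}$ attains its bound exactly when $P^{R}_{I'}$ does, i.e.\ $I$ is a Golod $R$-module if and only if $I'$ is.

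It then suffices to prove: (A) $A$ is a Golod ring if and only if $I$ is a Golod $R$-module; and (B) if $A$ is Golod and $N$ is a Golod $R$-module, then $N$ is a Golod $A$-module. Indeed, if $N$ is a Golod $A$-module then it is a Golod $R$-module by Theorem~\ref{p6}, and $A$ is Golod because a ring admitting a Golod module is Golod (Theorem~\ref{levingm}, applied over a minimal Cohen presentation of $A$), so $I$ is a Golod $R$-module by (A); conversely, if $N$ and $I$ are Golod $R$-modules, then $A$ is Golod by (A) and (B) finishes. The case $N=k$ of this, together with the fact that ``$I$ a Golod $R$-module'' forces $R$ Golod (Theorem~\ref{levingm}), is the final ``in particular''.

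Granting two Koszul-polynomial identities, (B) and the implication ``$I$ Golod $R$-module $\Rightarrow A$ Golod'' of (A) follow from a Poincar\'e series computation. Set $s=\mu_{R}(I)=\mu_{R}(I')$ (Lemma~\ref{m2.5}). First, choosing a minimal generating set of $\n$ that extends one of $\m$ by $s$ elements of $\ker p$ --- possible since $\n/\n^{2}=\m/\m^{2}\oplus I/\m I$ by Lemma~\ref{m2.5} --- the last $s$ exterior generators of $K^{A}$ have differentials in $\ker p$ and so act as zero on any $R$-module viewed as an $A$-module through $p$; hence $K^{A}\otimes_{A}N$ splits in each exterior degree into shifted copies of $K^{R}\otimes_{R}N$, giving $\kappa^{A}_{N}(t)=(1+t)^{s}\kappa^{R}_{N}(t)$, and likewise (using generators in $\ker p'$) for modules through $p'$; in particular $\kappa^{A}_{R}(t)=(1+t)^{s}\kappa_{R}(t)$ and $\kappa^{A}_{I}(t)=(1+t)^{s}\kappa^{R}_{I}(t)$, the $A$-structure on $I$ factoring through $p'$ since $II'=0$. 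Second, I claim that when $I$ is a Golod $R$-module,
\[
1-t\bigl(\kappa_{A}(t)-1\bigr)=(1+t)^{s}\Bigl(1-t\bigl(\kappa_{R}(t)-1\bigr)-t\,\kappa^{R}_{I}(t)\Bigr).
\]
Granting both, note $R$ is Golod; then Lemma~\ref{m1} and the Golodness of $N$ and $I$ over $R$ give $P^{A}_{N}(t)=P^{R}_{N}(t)/(1-tP^{R}_{I}(t))=\kappa^{R}_{N}(t)/\bigl(1-t(\kappa_{R}(t)-1)-t\kappa^{R}_{I}(t)\bigr)$, which by the two displays equals $\kappa^{A}_{N}(t)/\bigl(1-t(\kappa_{A}(t)-1)\bigr)$; so $N$ is a Golod $A$-module.

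The genuinely hard points are the displayed identity and the implication ``$A$ Golod $\Rightarrow I$ a Golod $R$-module'' of (A); observe the identity is false in general (it fails, for instance, for non-hypersurface complete intersections $A$), so it must use a Golod hypothesis. My plan is to work with the DG-algebra retract $K^{R}\to K^{A}\to K^{R}$ of Lemma~\ref{p2}, which presents $K^{A}$ as $K^{R}$ with exterior variables $f_{1},\dots,f_{s}$ adjoined to kill a minimal generating set $h_{1},\dots,h_{s}$ of $I$, so $\Ho(K^{A})=\Ho(K^{R})\oplus\Ho(L)$ with $L=(f_{1},\dots,f_{s})$ the complementary DG ideal; tensoring $0\to I\to A\to R\to 0$ with $K^{A}$ and inserting the formulas for $\kappa^{A}_{I}$ and $\kappa^{A}_{R}$ then expresses $\kappa_{A}(t)$ through the ranks of the connecting maps $\Ho_{\bullet}(A;R)\to\Ho_{\bullet-1}(A;I)$, and the identity amounts to these ranks being the binomial coefficients $\binom{s}{\bullet}$. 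I expect this, and more, to come out of transporting the trivial Massey operations furnished by Lemma~\ref{p6.2} and Proposition~\ref{p6.3} (for ``$R$ Golod'' and ``$I$ a Golod $R$-module'') into an explicit trivial Massey operation on $K^{A}$ --- extending a trivial Massey operation on $\mathfrak{b}_{R}\sqcup\mathfrak{b}_{I}$ across the $f_{j}$ --- which at once yields the Koszul identity, shows $A$ is Golod, and (via Proposition~\ref{p6.3}, after also extending across a basis of $\Ho(N)$ for the module $N$) proves $N$ Golod over $A$ directly; for the converse implication of (A) one runs the construction in reverse, recovering trivial Massey operations on $K^{R}$ and on $K^{R}\otimes_{R}I$ from one on $K^{A}$ by composing with the retraction $p'$ and the natural comparison maps. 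Controlling how the $f_{j}$ interact with the chosen cycles of $K^{R}$ and of $K^{R}\otimes_{R}I$ is the technical heart of the argument.
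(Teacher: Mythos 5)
Your reduction, the first assertion via Lemma~\ref{m2.5}, the identity $\kappa^A_N(t)=(1+t)^s\kappa^R_N(t)$, and the final Poincar\'e-series computation combining it with Lemma~\ref{m1} all match the paper's argument. But the two steps you yourself flag as the heart of the matter are left as a plan rather than proved, and this is a genuine gap. For the displayed identity $1-t(\kappa_A(t)-1)=(1+t)^s\bigl(1-t(\kappa_R(t)-1)-t\kappa^R_I(t)\bigr)$, no full trivial Massey operation on $K^A$ is needed and none is constructed in the paper; what is needed is exact control of the connecting map $\delta:\Ho_\bullet(K^A\otimes_A R)\rightarrow\Ho_{\bullet-1}(K^A\otimes_A I)$ coming from $0\rightarrow K^A\otimes_A I\rightarrow K^A\rightarrow K^A\otimes_A R\rightarrow 0$. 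The paper gets the rank $\binom{s}{\bullet}$ from two concrete ingredients you do not supply: (a) a compatibility of generators (its Claim 1: one can choose minimal generators $h_i$ of $I$ and $h_i'$ of $I'$ with $h_i-h_i'\in\m A$), which produces cycles $Z_s=Y_s+\overline{f_s}$ in $K^A\otimes_A R$ and makes $\delta$ injective on $1\otimes\Lambda_{\geq 1}(\bigoplus kZ_s)$ via the projection to $I/\m I\otimes\Lambda(\bigoplus kY_s)$; and (b) Corollary~\ref{p7} applied to the Golod module $I$, which forces $\delta$ to vanish on $\Ho_{\geq 1}(R)\otimes\Lambda(\bigoplus kZ_s)$ because $\Ho_{\geq 1}(R)\cdot\Ho(I)=0$. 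Your hope that an explicit trivial Massey operation on $K^A$ ``extending across the $f_j$'' yields all of this at once is unsubstantiated, and building such an operation is at least as hard as the statement you are trying to prove.

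The converse implication ``$A$ Golod $\Rightarrow I$ a Golod $R$-module'' is also not established by your plan. Composing with the retraction $p'$ (or with the DG retraction $K^A\rightarrow K^R$ of Lemma~\ref{p2}) kills exactly the summand of $K^A$ carrying $\Ho(I)$, so it can recover Massey-product information for $K^R$ (i.e.\ that $R$ is Golod, which you already have by other means) but not for the DG $K^R$-module $K^I$. The paper's device is different and essential: the injective $\Ho(R)$-linear map $\Ho(I)\rightarrow\Ho(A)[n]$, $z\mapsto Y_1\wedge\cdots\wedge Y_n\wedge z$, lets one take a defining system for $v_1\in\Ho(I)$, $v_2,\ldots,v_n\in\Ho_{\geq 1}(R)$, multiply its first row by $Y_1\wedge\cdots\wedge Y_n$ to obtain a defining system in $K^A$, invoke Golodness of $A$ to see the resulting element is a boundary in $K^A$, and then read off that the original element is a boundary in $K^I$; Proposition~\ref{p6.3} $(ii)\Rightarrow(i)$ then gives that $I$ is a Golod $R$-module. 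Without this (or an equivalent mechanism relating cycles of $K^I$ to cycles of $K^A$), your ``run the construction in reverse'' step does not go through.
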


\begin{proof} 
The first part follows from $(1)$ and $(2)$ of Lemma \ref{m2.5}. Let $\mu_R(I) = \mu_R(I') = n$.

\noindent
{\bf Claim 1:} Let $\{g_1, g_2, \ldots, g_m\}$ be a minimal generating set of $\m$ and $\{h_1, h_2, \ldots, h_n\}$ be that of $I$. We claim that we can choose a minimal generating set $\{h_1', h_2', \ldots, h_n'\}$ of $I'$ such that $h_i - h_i' \in \m A$.

\noindent
{\bf Proof of the claim:} Let $\{h_1', h_2', \ldots, h_n'\}$ be any minimal generating set of $I'$. Then both  $\{g_1, g_2, \ldots, g_m, h_1, h_2, \ldots, h_n\}$ and $\{g_1, g_2, \ldots, g_m, h_1', h_2', \ldots, h_n'\}$ are minimal generating sets of $\n$. So we have a matrix of the form 
$B = \begin{pmatrix}
I_m & *\\ 
0 & C
\end{pmatrix} \in \M_{m + n}(A), C \in \M_{ n}(A)$ such that $(g_1, \ldots, g_m, h_1,\ldots, h_n) = (g_1, \ldots, g_m, h_1',\ldots, h_n')B$.
Now $B$ is nonsingular modulo $\n$ since its image $\bar{B} \in \M_n(k)$, $k = R/ \m = A/ \n$ is the matrix of change of coordinates with respect to two different bases of $\n/\n^2$. So $B \in \GL_{m + n}(A)$. This gives $C \in \GL_n(A)$. The minimal generating set $\{h_1'', h_2'', \ldots, h_n''\}$ of $I'$ defined by $(h_1'', h_2'', \ldots, h_n'') = (h_1', h_2', \ldots, h_n')C$ will do the job.

Now we prove the last part of the theorem. We have  the following equality by Lemma \ref{m1}. 
\begin{equation}\label{eq1}
P^A_N(t) = \frac{P^R_N(t)}{1 - tP^R_{I}(t)}.
\end{equation}

We assume that both $N$ and $I$ are  Golod $R$-modules. Then we have
$P^R_N(t) = \frac{\kappa_N^R(t)}{1 - t(\kappa_R(t) - 1)}$ and $P^R_I(t) = \frac{\kappa_I^R(t)}{1 - t(\kappa_R(t) - 1)}$  (see Definition \ref{p3}). Therefore, 

\begin{equation}\label{eq2.5}
\frac{P^R_I(t)}{P^R_N(t)} = \frac{\kappa_I^R(t)}{\kappa_N^R(t)}
\end{equation}

We choose minimal sets of generators $\{g_1, g_2, \ldots, g_m\}$ of $\m$, $\{h_1, h_2, \ldots, h_n\}$ of $I$ and $\{h_1', h_2',\\ \ldots, h_n'\}$ of $I'$ obtained by the claim 1. Let $K^A = \langle X_l, Y_s | \partial(X_l) = g_l, \partial(Y_s) = h_s' \rangle$ be the Koszul complex of $A$ on the minimal set of generators $\{g_1, g_2, \ldots, g_m, h_1', h_2', \ldots, h_n'\}$ of $\n$. We have the following short exact sequence of $A$-modules.
\[0 \rightarrow I \rightarrow A \xrightarrow{p} R \rightarrow 0.\]
This gives the following short exact sequence of complexes.
\[0 \rightarrow K^A \otimes_A I \rightarrow K^A \xrightarrow{p} K^A \otimes_A R \rightarrow 0.\] 

Now $II' = 0$. So $ K^A \otimes_A I = K^I \otimes_R \Lambda(\bigoplus_{s= 1}^n RY_s)$ and $\Ho(K^A \otimes_A I) = \Ho(I) \otimes_k \Lambda (\bigoplus_{s = 1}^n kY_s)$. Since $h_s - h_s' \in \m A$, we can choose linear polynomials $f_1, f_2, \ldots, f_n \in A\langle X_1, X_2, \ldots, X_m \rangle \subset K^A$ such that $\partial(Y_s + f_s) = h_s' + f_s(g_1, \ldots, g_m) = h_s$. Let $\overline{f_s} \in R\langle X_1, X_2, \ldots, X_m\rangle \subset K^R$ denote the image of $f$ under $p$ and $Z_s = Y_s + \overline{f_s} \in K^A \otimes_A R$. Note that $\partial(Z_s) = 0$ in  $K^A \otimes_A R$. Therefore, we have $K^A \otimes_A R = K^R \otimes_R \Lambda(\bigoplus_{s= 1}^n RZ_s)$ and $\Ho(K^A \otimes_A R) = \Ho(R) \otimes_k \Lambda (\bigoplus_{s = 1}^n kZ_s)$.

Without loss of generality, we assume $N$ as an $A$-module via the map $p$. By a similar argument for the $R$-module $N$ we have  $K^A \otimes_A N = K^N \otimes_R \Lambda(\bigoplus_{s= 1}^n RZ_s)$ and $\Ho(K^A \otimes_A N) = \Ho(N) \otimes_k \Lambda (\bigoplus_{s = 1}^n kZ_s)$. Therefore, 
\begin{equation}\label{eq3}
\kappa_N^A(t) = \kappa_N^R(t)(1 + t)^n
\end{equation}

We study the boundary map $\delta : H (K^A \otimes_A R) \rightarrow \Ho(K^A \otimes_A I)[-1]$. The boundary map $\delta$ is $\Ho(R)$ linear and $\delta(Z_{s_1} \wedge Z_{s_2} \ldots \wedge Z_{s_{u}}) = \sum c_{l_1 \ldots l_v} Y_{l_1} \wedge \ldots Y_{l_v} $ for $c_{l_1 \ldots l_v} \in \Ho_{u - v - 1}(I)$.

%
\vskip 0.5cm
\noindent
{\bf Claim 2}:
The restriction $\delta_{res} : \Lambda_{r + 1}(\bigoplus_{s =1}^{n} kZ_s) \rightarrow \Ho_r(K^A \otimes_A I)$ is an injection for $r \geq 0$.

We have  $\Ho_r(K^A \otimes_A I) = \bigoplus_{u + v = r}\Ho_u(I) \otimes_k \Lambda_v (\bigoplus_{s =1}^n kY_s)$. Let $pr : \Ho_r(K^A \otimes_A I) \rightarrow I/\m I \otimes_k \Lambda_r (\bigoplus_{s =1}^n kY_s)$ denote the projection. The composition $pr \circ \delta_{res}$ is given by $Z_{s_1} \wedge Z_{s_2} \ldots \wedge Z_{s_{r + 1}} \mapsto \sum_{l = 1}^{r + 1} (-1)^{ l + 1} \bar{h}_{s_l} Y_{s_1} \wedge Y_{s_{l - 1}} \wedge Y_{s_{l + 1}} \wedge \ldots \wedge Y_{s_{r + 1}}$. This is an injection as $\bar{h}_1, \ldots \bar{h}_n$ is a vector space basis of $I/\m I$. So $\delta_{res}$ is also an injection.
Therefore, we have 
\begin{equation}\label{eq4}
\kappa_A(t) 
\leq \kappa_I^R(t)( 1 + t)^n + \kappa_R(t)(1 + t)^n - (1 + t)\frac{(1 + t)^n - 1}{t}
\end{equation}

Note that $I$ is a Golod $R$-module by our assumption. So by Corollary \ref{p7} the boundary map $\delta$ is zero on $\bigoplus_{r > 0}\Ho_{r}(R) \otimes_k \Lambda(\bigoplus_{s =1}^n kZ_s)$. This is so because
\begin{align*}
&\delta(z \otimes Z_{s_1} \wedge Z_{s_2} \ldots Z_{s_{u}})\\
& =  (-1)^{|z|} z \otimes_k \delta(Z_{s_1} \wedge Z_{s_2} \ldots Z_{s_{u}})\\
&=(-1)^{|z|} \sum z \cdot c_{l_1 \ldots l_v} Y_{l_1} \wedge \ldots Y_{l_v}\, \text{for}\, z \in {\Ho}_r(R)\, \text{and}\, c_{l_1 \ldots l_v} \in {\Ho}_{u - v - 1}(I).
\end{align*}

So equality holds in \ref{eq4}. We have $\kappa_A(t) = \kappa_I^R(t)( 1 + t)^n + \kappa_R(1 + t)^n - (1 + t)\frac{(1 + t)^n - 1}{t}$. This implies

\begin{equation}\label{eq5}
1 - t(\kappa_A(t)  - 1) = ( 1 + t)^n\{(1 + t) - t(\kappa_I^R(t) + \kappa_R(t)) \}
\end{equation}

Now we have
\begin{align*}
\frac{1}{P^A_N(t)} & = \frac{1}{P^R_N(t)} - t\frac{P^R_I(t)}{P^R_N(t)} \quad \text{by equation \ref{eq1}}\\
&= \frac{1 - t(\kappa_R(t) -1)}{\kappa_N^R(t)} - t \frac{\kappa_I^R(t)}{\kappa_N^R(t)} \quad \text{by equation \ref{eq2.5} }\\
&= \frac{(1 + t) - t(\kappa_R(t) + \kappa_I^R(t))}{\kappa_N^R(t)}\\
&= \frac{(1 + t)^n\{(1 + t) - t(\kappa_R(t) + \kappa_I^R(t))\}}{\kappa_N^A(t)} \quad \text{by equation \ref{eq3}}\\
&= \frac{1 - t(\kappa_A(t)  - 1)}{\kappa_N^A(t)}\quad \text{by equation \ref{eq5}}\\
\end{align*}
Hence $N$ is a Golod $A$-module whenever both $N$, $I$ are Golod $R$-modules.

Now we prove the converse. We assume that $N$ is a Golod $A$-module. Then by Theorem \ref{p6}, $N$ is a Golod $R$-module. Note that $A$, $R$ are Golod rings by Theorem \ref{levingm}. So both $K^R$, $K^A$ admit trivial Massey products on homologies. 

We recall $K^R = \langle X_l | \partial (X_l) = g_l \rangle$, $K^I = K^R \otimes I$ and $K^A = \langle X_l, Y_s | \partial(X_l) = g_l, \partial(Y_s) = h_s' \rangle$. We write $\Ho(I) = \Ho(K^I)$, $\Ho(A) = \Ho(K^A)$.
It is easy to see that the map $\Ho(I) \rightarrow \Ho(A)[n]$ defined by $z \mapsto Y_1 \wedge Y_2 \ldots \wedge Y_n \wedge  z$ is an injective $\Ho(R)$-module homomorphism.

Let  $v_1 \in \Ho(I)$ and $v_2, v_3, \ldots, v_{n} \in \Ho_{\geq 1}(R)$ . Let  $\{a_{ij} | 0 \leq i < j \leq n, (i\ j) \not= (0 \ n) \}$ be a defining system for $v_1, v_2, \ldots, v_{n - 1}, v_n$. Suppose $\wedge Y_{\ast}$ denotes $Y_1 \wedge Y_2 \ldots \wedge Y_n$.
We define 
\begin{align*}
b_{ij} = 
\begin{cases}
(\wedge Y_{\ast}) \wedge a_{ij}  \ & \text{if} \ i = 0\\
a_{ij} \ &\text{if} \ 0 < i < j \leq n.
\end{cases}
\end{align*}

Then it is easy to see that $\{b_{ij} | 0 \leq i < j \leq n, (i\ j) \not= (0 \ n) \}$ is a defining system for $(\wedge Y_{\ast}) \wedge v_1, v_2, \ldots, v_n $. Since $A$ is a Golod ring the element $\sum_{k = 1}^{n - 1} \overline{b_{0 k}}b_{k n} =  - \overline{\wedge Y_{\ast}}(\sum_{k = 1}^{n - 1} \overline{a_{0 k}}a_{k n})$ is a boundary in $K^A$. So $\sum_{k = 1}^{n - 1} \overline{a_{0 k}}a_{k n}$ is a boundary in $K^I$. Therefore, $\langle v_1, \ldots, v_n \rangle = 0$ and $I$ is a Golod $R$-module by Proposition \ref{p6.3} $(ii) \implies (i)$.
\end{proof}

\section{applications}
In this section we apply our main result to find new constructions of Golod rings. The following theorem gives us a method to study the Golod property of modules using results available to characterise Golod rings.

\begin{theorem}\label{ap1}
Let $(R, \m)$ be a local ring and $M$ be an $R$-module. Let $A = R \lJoin M$ be the trivial extension of $R$ by $M$. Then $A$ is a Golod ring if and only if $M$ is a Golod $R$-module. 
\end{theorem}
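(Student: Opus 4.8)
The plan is to realize the trivial extension $A = R \lJoin M$ as an algebra retract in two different ways so that Theorem \ref{m2} applies directly. Recall that $A = R \oplus M$ as an $R$-module with multiplication $(r_1, m_1)(r_2, m_2) = (r_1 r_2, r_1 m_2 + r_2 m_1)$, so that $M$ (with the zero product among its elements) is an ideal of $A$ with $M^2 = 0$. The inclusion $j : R \to A$, $r \mapsto (r, 0)$, is a ring homomorphism, and the projection $p : A \to R$, $(r, m) \mapsto r$, is a section of $j$ with $\ker(p) = M$ (viewed as the ideal $0 \oplus M$). Thus $j$ is an algebra retract with section $p$, and $I := \ker(p) = M$ is the ideal $0 \oplus M$ of $A$, whose $R$-module structure via $j$ is precisely the original $R$-module structure on $M$.

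First I would take $p' = p$, so $I' = I = M$; then $II' = M^2 = 0$ is automatic, and the hypotheses of Theorem \ref{m2} are satisfied with $N = k$ (or more simply, we use the ``In particular'' clause). Theorem \ref{m2} then gives: $A$ is a Golod ring if and only if $I = M$ is a Golod $R$-module, where $M$ carries its $R$-module structure via the retract map $j$. Since that structure is exactly the given one, we are done. The only thing to verify carefully is that the $R$-module structure on $\ker(p)$ induced by $j$ agrees with the original $R$-module structure on $M$: for $r \in R$ and $(0, m) \in \ker(p)$, we have $j(r) \cdot (0, m) = (r, 0)(0, m) = (0, rm)$, which is indeed the original action. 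This is the routine identification and poses no real obstacle.

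The main (very mild) obstacle is purely bookkeeping: confirming that $A = R \lJoin M$ literally fits the hypotheses of Theorem \ref{m2} — i.e. that it is a Noetherian local ring (it is, since $M$ is a finitely generated $R$-module and $A/\mathfrak{n} = k$ with $\mathfrak{n} = \m \oplus M$), that $j$ is a local homomorphism, and that $M$ as an ideal of $A$ satisfies $M \cdot M = 0$. All of these are immediate from the definition of the trivial extension. I would therefore structure the proof as: (1) recall the trivial extension structure and note $M^2 = 0$; (2) observe $j : R \hookrightarrow A$ with section $p$ is an algebra retract and $\ker(p) = M$ with its natural $R$-structure; (3) apply Theorem \ref{m2} with $p' = p$ to conclude $A$ is Golod iff $M$ is a Golod $R$-module.

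\begin{proof}
Write $A = R \lJoin M = R \oplus M$ with multiplication $(r_1, m_1)(r_2, m_2) = (r_1 r_2, r_1 m_2 + r_2 m_1)$. Then $A$ is a Noetherian local ring with maximal ideal $\n = \m \oplus M$ and residue field $k$. The map $j : R \to A$, $j(r) = (r, 0)$, is a local ring homomorphism, and $p : A \to R$, $p(r, m) = r$, satisfies $p \circ j = \id$; hence $j$ is an algebra retract with section $p$. Its kernel $I = \ker(p) = 0 \oplus M$ is an ideal of $A$ with $I^2 = 0$, since $(0, m_1)(0, m_2) = (0, 0)$. Taking $p' = p$, so that $I' = I$, the condition $I I' = I^2 = 0$ holds, and the $R$-module structure on $I$ induced by $j$ is the original one on $M$: for $r \in R$ and $(0, m) \in I$ we have $j(r)(0, m) = (r, 0)(0, m) = (0, rm)$. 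By Theorem \ref{m2}, the ring $A$ is Golod if and only if $I$ — equivalently $M$ — is a Golod $R$-module.
\end{proof}
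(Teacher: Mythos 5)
Your proposal is correct and follows essentially the same route as the paper: realize $A = R \lJoin M$ as an algebra retract via $j(r)=(r,0)$ with section $p(r,m)=r$, note $\ker(p)^2 = M^2 = 0$, and apply Theorem \ref{m2} (with $p'=p$). The extra verifications you include (locality, Noetherianness, and the identification of the induced $R$-structure on $\ker(p)$ with the given one on $M$) are routine and consistent with the paper's shorter argument.
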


\begin{proof}
We have $j : R \rightarrow A,\ j(r) = (r, 0)$ and $p : A \rightarrow R,\ p(r, m) = r$ such that $p \circ j = id$. We have $\ker(p)^2 = M^2 = 0$. 
The proof now follows from Theorem \ref{m2}.
\end{proof}

\begin{remark}
The above result gives us a new class of Golod ideals in a polynomial algebra. For example if $\m = (X_1, \ldots, X_n)$ in $R = k[X_1, \ldots, X_n]$, $\n = (Y_1, \ldots, Y_m)$ in $S = R[Y_1, \ldots, Y_m]$ and $\Ch(k) = 0$, then the ideal $I = \m^s + \m^t\n + \n^2, s > t$ is a Golod ideal. This is because $S/I$ is a trivial extension of $R/\m^s$ by the Golod module $\oplus_{i = 1}^m \frac{R}{\m^t}e_i$ (see Corollary \ref{p7.2}). Note that $\partial(I)^2 \not\subset I$ if $s > 2t$. If $s \leq t$, then $I$ is not a Golod ideal as the ring $\frac{k[X_1, Y_1]}{(X_1^s, Y_1^2)}$ is a retract of $S/I$ and $\frac{k[X_1, Y_1]}{(X_1^s, Y_1^2)}$ is not a Golod ring. We do not know a complete classification of $(s, u, t, v) \in \NN^4$ such that $\m^s + \m^t\n^u + \n^v$ is a Golod ideal in $S$.
\end{remark}

Our next result characterises regular local rings in terms of the Golod property of canonical modules.

\begin{corollary}\label{ap4}
Let $(R, \m)$ be a Cohen-Macaulay local ring which admits the canonical module $\omega_R$. Then the following are equivalent.
\begin{enumerate}
\item[(1)]
$R$ is a regular local ring.

\item[(2)]
$\omega_R$ is a Golod $R$-module.
\end{enumerate}

\end{corollary}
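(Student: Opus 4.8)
The plan is to prove the two implications separately, using the trivial extension $A = R \lJoin \omega_R$ together with Theorem \ref{ap1} as the bridge.

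For $(1)\Rightarrow(2)$, suppose $R$ is regular, say $\dim R = n$. Then $\m$ is generated by a regular sequence, so $K^R$ is a minimal free resolution of $k$; hence $\Ho_{\geq 1}(R) = 0$, i.e. $\kappa_R(t) = 1$, and for every finitely generated $R$-module $M$ one has $\Ho_i(K^M) = \Ho_i(K^R\otimes_R M) = \Tor^R_i(k,M)$, so $\kappa_M(t) = P^R_M(t)$. Consequently the Serre bound $\kappa_M(t)/\bigl(1 - t(\kappa_R(t)-1)\bigr) = \kappa_M(t) = P^R_M(t)$ is attained, so every module, in particular $\omega_R$, is a Golod $R$-module.

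For $(2)\Rightarrow(1)$, assume $\omega_R$ is a Golod $R$-module. Since $\ker(A\to R)^2 = \omega_R^2 = 0$, Theorem \ref{ap1} gives that $A = R\lJoin\omega_R$ is a Golod ring. On the other hand, since $R$ is Cohen--Macaulay with canonical module $\omega_R$, a classical theorem of Reiten (see also Foxby) says that $A$ is Gorenstein. I will then argue that a Golod Gorenstein local ring is a hypersurface. Indeed, being Golod, $A$ admits a trivial Massey operation on $\Ho_{\geq 1}(A)$ by Lemma \ref{p6.2}; in the length-two case the relation $\partial\mu(h_{\lambda_1},h_{\lambda_2}) = \overline{\mu(h_{\lambda_1})}\mu(h_{\lambda_2})$ shows every product of two classes in $\Ho_{\geq 1}(A)$ is a boundary, so $\Ho_{\geq 1}(A)\cdot\Ho_{\geq 1}(A) = 0$. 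Being Gorenstein of codimension $c := \operatorname{edim}A - \dim A$, the Koszul homology algebra $\Ho_\bullet(A)$ is a Poincar\'e duality algebra with one-dimensional top part $\Ho_c(A)\cong k$, by the theorem of Avramov--Golod. If $c\geq 2$, then $\Ho_1(A)\neq 0$ (its dimension is the minimal number of generators of the defining ideal, which is $\geq c$) and by duality $\Ho_{c-1}(A)\neq 0$, and the perfect pairing $\Ho_1(A)\times\Ho_{c-1}(A)\to\Ho_c(A)\cong k$ is nonzero, contradicting the vanishing of products. Hence $c\leq 1$.

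It remains to rule out $c=0$ and unwind $c=1$. As in Lemma \ref{m2.5}(3) (applied with $I = \omega_R$, using $\omega_R^2 = 0$) we have $\operatorname{edim}A = \operatorname{edim}R + \mu_R(\omega_R)$, while $\dim A = \dim R$ since the nilpotent ideal $0\oplus\omega_R$ satisfies $A/(0\oplus\omega_R)\cong R$. As $\omega_R\neq 0$, we get $\mu_R(\omega_R)\geq 1$, hence $\operatorname{edim}A > \dim A$ and $A$ is not regular; therefore $c = 1$, i.e. $\operatorname{edim}A = \dim A + 1$. Combining these, $\operatorname{edim}R + \mu_R(\omega_R) = \dim R + 1$, and since $\operatorname{edim}R\geq\dim R$ and $\mu_R(\omega_R)\geq 1$ this forces $\operatorname{edim}R = \dim R$, i.e. $R$ is regular (and $\mu_R(\omega_R)=1$, consistent with $\omega_R\cong R$). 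The main obstacle is the step ``Golod and Gorenstein $\Rightarrow$ hypersurface'': one either cites it as a known result, or proves it as sketched above, which requires quoting the Avramov--Golod Poincar\'e-duality structure on the Koszul homology of a Gorenstein ring; one also needs the classical fact that $R\lJoin\omega_R$ is Gorenstein, together with the elementary $\operatorname{edim}$/$\dim$ bookkeeping for trivial extensions.
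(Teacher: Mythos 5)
Your proposal is correct and follows essentially the same route as the paper: pass to the trivial extension $A = R \lJoin \omega_R$, use Theorem \ref{ap1} to get $A$ Golod (hence $\Ho_{\geq 1}(A)^2 = 0$), Reiten's theorem to get $A$ Gorenstein, the Avramov--Golod Poincar\'e duality on $\Ho(K^A)$ to force $\operatorname{codepth}(A) \leq 1$, and then the embedding-dimension/depth bookkeeping for the trivial extension to conclude $R$ is regular. Your spelled-out duality-pairing argument and the explicit $(1)\Rightarrow(2)$ step (every module over a regular ring is Golod) are just more detailed versions of what the paper leaves implicit.
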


\begin{proof}
We only need to show $(2) \implies (1)$. Let $A$ be the trivial extension of $R$ by $\omega_R$. Then $A$ is Golod ring and $\Ho_{\geq 1}(A)^2 = 0$. Since $A$ is also a Gorenstein ring, $\Ho(A)$ is a Poincar\'e algebra (cf. \cite{av0}). So $co\depth(A) \leq 1$. Note that $\depth(R) = \depth(A)$ and $emb\dim(A) \geq 1 + emb\dim(R)$. So $co\depth(R) = 0$. Therefore, $R$ is a regular local ring.
\end{proof}


%

\begin{definition}{\rm (Fibre product)}\label{fibre}
Let $\e_i : (R_i, \m_i) \twoheadrightarrow (S, \n), i = 1, 2$ be local epimorphisms. We define the fibre product of $\e_1, \e_2$ as the ring $R_1 \times_S R_2 = \{(r_1, r_2) : \e_1(r_1) = \e_2(r_2)\}$. When the surjections  $\e_1, \e_2$ are clear from the context, we say that the ring $R_1 \times_S R_2$ is the fibre prduct of $R_1$ and $R_2$ over $S$.
\end{definition}

\begin{lemma}\label{ap2}
Let $I$ be an ideal of a local ring $R$. Let $A  = R\times_{R/I}R$. Then the following are equivalent.
\begin{enumerate}

\item The fibre product $A$ is a Golod ring. 

\item The ideal $I$ is a Golod $R$-module.

\item
The ideal $I \oplus I$ of $A$ is a Golod $A$-module.
\end{enumerate}
\end{lemma}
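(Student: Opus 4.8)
The plan is to realize the fibre product $A = R \times_{R/I} R$ as an algebra retract of the kind covered by Theorem \ref{m2}, and then read off all three equivalences at once. First I would observe that the diagonal map $j : R \to A$, $j(r) = (r,r)$, is a well-defined local ring homomorphism, and that each of the two projections $p_1, p_2 : A \to R$, $p_i(r_1,r_2) = r_i$, is a section of $j$ (since $\e_1 \circ p_1 = \e_2 \circ p_2$ forces the pair to be legal, and $p_i \circ j = \mathrm{id}_R$). Thus $j$ is an algebra retract admitting the two sections $p_1, p_2$. Next I would compute the kernels: $\ker(p_1) = \{(0, x) : x \in I\} \cong I$ and $\ker(p_2) = \{(x, 0) : x \in I\} \cong I$ as $R$-modules via $j$, and crucially $\ker(p_1)\ker(p_2) = 0$ inside $A$, because $(0,x)(y,0) = (0,0)$. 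So the hypotheses of Theorem \ref{m2} are satisfied with $I' = I'' = I$ (up to the identifications above).

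With that setup, the equivalences follow directly. Taking $N = k$ (or more simply invoking the ``in particular'' clause of Theorem \ref{m2}), $A$ is a Golod ring if and only if $\ker(p_1) \cong I$ is a Golod $R$-module; this gives $(1) \Leftrightarrow (2)$. For $(1) \Leftrightarrow (3)$, I would note that $I \oplus I$ is precisely the maximal ideal-type object appearing as $\ker(p_1) \oplus \ker(p_2)$, but more to the point it is an $A$-module, and I would either apply the module statement of Theorem \ref{m2} with $N$ chosen so that its $A$-structure recovers $I \oplus I$, or — cleaner — observe that $A$ is itself a Golod ring iff it admits a Golod module over itself (by Theorem \ref{levingm}, any ring admitting a Golod module is Golod, and conversely a Golod ring has its residue field Golod), combined with the fact that $I \oplus I$ as an $A$-module decomposes compatibly with the retract so that its Golodness over $A$ is governed by the same criterion. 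The most economical route is: $I \oplus I = \ker(\e)$ where $\e : A \to R/I$ is the common composite $\e_1 p_1 = \e_2 p_2$, and one identifies its Golodness over $A$ with that of $I$ over $R$ via Lemma \ref{m1} and equation \eqref{eq1}.

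The step I expect to require the most care is pinning down the $A$-module structure on $I \oplus I$ and checking that Theorem \ref{m2}'s module statement applies to it — i.e. exhibiting $I \oplus I$ (or a module Poincaré-equivalent to it) as an $R$-module made into an $A$-module through one of the sections $p_i$, so that ``$N$ is Golod over $A$ iff $N$ and $I$ are Golod over $R$'' can be quoted verbatim. Since as an $A$-module $I \oplus I$ is annihilated by neither section's kernel in an obvious way, I would instead argue via the short exact sequence $0 \to I \oplus I \to \n_A \to \m/I \oplus \text{(stuff)} \to \cdots$ or, better, directly via Theorem \ref{p6} and Theorem \ref{m2} applied to the $A$-module $R$ (through $p_1$), using that the Poincaré series identities in Lemma \ref{m1} already encode everything: $P^A_{I\oplus I}$ rationalizes exactly when $P^R_I$ does. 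The rest is bookkeeping with Definition \ref{p3}.
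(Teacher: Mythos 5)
Your setup is exactly the paper's: the diagonal $j(r)=(r,r)$ with the two projections $p_1,p_2$ as sections, both kernels isomorphic to $I$ and with product zero, so Theorem \ref{m2} applies; the equivalence $(1)\Leftrightarrow(2)$ via its ``in particular'' clause and the implication $(3)\Rightarrow(1)$ via Theorem \ref{levingm} are as in the paper. The genuine gap is the remaining implication $(2)\Rightarrow(3)$, which you never close. Your ``cleaner'' route --- that $A$ is Golod iff it admits a Golod module --- only yields $(3)\Rightarrow(1)$; it says nothing about the particular module $I\oplus I$ being Golod. Your other suggestions (choosing $N$ whose $A$-structure ``recovers'' $I\oplus I$, or invoking Lemma \ref{m1} and equation \eqref{eq1} for $\ker(\e)$) run into the obstacle you yourself flag: Lemma \ref{m1} and the module clause of Theorem \ref{m2} apply to an $R$-module made into an $A$-module through a \emph{single} section, and $I\oplus I$ as a whole is not of that form.

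The missing idea is not to treat $I\oplus I$ whole but to decompose it: as an ideal of $A$ it is $\ker(p_1)\oplus\ker(p_2)=(I\times 0)\oplus(0\times I)$, and since $\ker(p_1)\ker(p_2)=0$ the $A$-action on $0\times I$ factors through $p_2$ (indeed $(a_1,a_2)\cdot(0,x)=(0,a_2x)$) and the action on $I\times 0$ factors through $p_1$. Thus each summand is precisely the $R$-module $I$ viewed as an $A$-module via one of the sections, and the module statement of Theorem \ref{m2} with $N=I$ says each summand is a Golod $A$-module as soon as $I$ is a Golod $R$-module; a direct sum of Golod $A$-modules is again Golod (Poincar\'e series and Koszul polynomials are additive and the Serre bound has the same denominator), so $I\oplus I$ is a Golod $A$-module. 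This is the paper's argument for $(2)\Rightarrow(3)$, and together with your $(1)\Leftrightarrow(2)$ and $(3)\Rightarrow(1)$ it completes the cycle. In particular your remark that ``$I\oplus I$ is annihilated by neither section's kernel in an obvious way'' is what led you astray: it is the two summands, not the whole module, that are annihilated by the respective kernels, and that is all that is needed.
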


\begin{proof}
We have $j : R \rightarrow A,\ j(r) = (r, r)$, $p : A \rightarrow R, \ p(r, r') = r$ and $p' : A \rightarrow R,\ p'(r, r') = r'$. We have $p \circ j = p' \circ j = id$ and $\ker(p) \ker{p'} = 0$. The equivalence of (1) and (2) clearly follows from Theorem \ref{m2}. 

If $I$ is a Golod $R$-module, then $I$ is a Golod $A$-module via any of the maps $p, p'$ by Theorem \ref{m2}. So the direct summand $I \oplus I$ is a Golod $A$-module. This proves that (2) implies (3). The proof of (3) implying (1) follows from Theorem \ref{levingm}. 
\end{proof}

Now we generalize the above result as follows:

\begin{theorem}\label{ap3}
Let $I$ be an ideal of a local ring $R$. Let for $n \geq 2$

\[A_n = \underbrace{R \times_{R/I} R \ldots \times_{R/I} R}_{n\ \text{times}}.\]
Then the following are equivalent.
\begin{enumerate}
\item $I$ is a Golod $R$-module.

\item The fibre product $A_n$ is a Golod ring for all $n \geq 2$.

\item The fibre product $A_n$ is a Golod ring for some $n \geq 2$.
\end{enumerate}
\end{theorem}

\begin{proof}
We have $A_n = \{ (r_1, \ldots, r_n) \in R^n : r_1 \equiv  \ldots \equiv r_n \vpmod I\}$. The ring homomorphism $\phi : A_m \rightarrow A_n$, $m < n$ defined by $\phi(r_1, \ldots, r_m) = (r_1, \ldots, r_n)$, $r_{m + i} = r_m$ for $i = 1, \ldots, n - m$ is an algebra retract.

Let $I$ be a Golod $R$-module. To prove $(2)$ it is enough to prove that $A_{2^n}$ is a Golod ring for all $n \geq 1$ since each $A_m$ is an algebra retract of $A_{2^n}$ for sufficiently large $n$.  By Theorem \ref{levingm} it suffices to prove that the ideal $J_{2^n} = \underbrace{I \oplus I \ldots \oplus I}_{2^n\ \text{times}}$ of $A_{2^n}$ is a Golod $A_{2^n}$-module for all $n \geq 1$. The statement is clear for $n = 1$ by Lemma \ref{ap2}. Now we assume that $J_{2^{n}}$ is a Golod $A_{2^{n}}$-module. Note that $A_{2^{n+1}} = A_{2^{n}} \times_{A_{2^{n}} / J_{2^{n}}} A_{2^{n}}$. So the ideal $J_{2^{n+1}}$ is a Golod $A_{2^{n+1}}$-module by Lemma \ref{ap2}. Therefore the statement is true by induction. Hence $(1)\implies(2)$ is proved.

$(2)\implies(3)$ is obvious.

The ring $A_2$ is an algebra retract of $A_n$ for $n \geq 2$. If $A_n$ is a Golod ring for some $n$, then  so is $A_2$. Then $I$ is a Golod $R$-module by Lemma \ref{ap2}. This establishes $(3)\implies(1)$.
\end{proof}

\begin{lemma}\label{ap6}
Let $f : (R, \m, k) \twoheadrightarrow (S, \n, k)$ be a large homomorphism of local rings and $M$ be an $S$-module. Let $R \lJoin M, S \lJoin M$ denote trivial extensions of $R$, $S$ by $M$ respectively. Then the induced map $g :R \lJoin M \twoheadrightarrow S \lJoin M$ defined by $g(r, m) = (f(r), m)$ is also a large homomorphism.
\end{lemma}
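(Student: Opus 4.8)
The plan is to reduce the largeness of $g : R \lJoin M \twoheadrightarrow S \lJoin M$ to the largeness of $f$ by exploiting that both trivial extensions sit in compatible algebra retract diagrams and that largeness is detected on $\Tor$-algebras over the residue field. First I would record the retract structure: writing $A = R \lJoin M$ and $B = S \lJoin M$, the inclusions $j_R : R \hookrightarrow A$, $j_S : S \hookrightarrow B$ are algebra retracts with sections the projections $\pi_R : A \twoheadrightarrow R$, $\pi_S : B \twoheadrightarrow S$, and the square relating $f$, $g$, $j_R$, $j_S$, $\pi_R$, $\pi_S$ commutes, i.e. $g \circ j_R = j_S \circ f$ and $\pi_S \circ g = f \circ \pi_R$. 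By Theorem \ref{p5} applied to the retract $j_R$ (with module $k$) one gets $P^A_k(t) = P^A_R(t) P^R_k(t)$, and similarly $P^B_k(t) = P^B_S(t) P^S_k(t)$; more usefully, the resolution $F_* \otimes_R G_*$ produced there identifies $\Tor^A(k,k)$ with $\Tor^A(R,k) \otimes_k \Tor^R(k,k)$ as graded vector spaces, and likewise over $B$.

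Next I would compute $\Tor^A(R,k)$ and $\Tor^B(S,k)$ explicitly. Since $M^2 = 0$ in $A$, the $A$-module $R = A/M$ has a very simple structure and $\Tor^A(R,k)$ is governed by the $R$-module $M$: in fact $\Tor^A_i(R,k)$ fits in the pattern coming from the two-term data $0 \to M \to A \to R \to 0$ of $A$-modules, and the key point is that $M$ is annihilated by $M \subset A$, so its $A$-module structure factors through $R$. The same computation over $B$ shows $\Tor^B(S,k)$ is governed by $M$ viewed as an $S$-module. Because $f$ is a large homomorphism, Theorem \ref{large} gives that $f_* : \Tor^R(M,k) \to \Tor^S(M,k)$ is surjective (taking the finitely generated $S$-module $M$, regarded as an $R$-module via $f$). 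Feeding this into the previous identifications, one concludes that the induced map $\Tor^A(R,k) \to \Tor^B(S,k)$ is surjective.

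Finally I would assemble these: under the graded-vector-space decompositions $\Tor^A(k,k) \cong \Tor^A(R,k) \otimes_k \Tor^R(k,k)$ and $\Tor^B(k,k) \cong \Tor^B(S,k) \otimes_k \Tor^S(k,k)$, the map $g_* : \Tor^A(k,k) \to \Tor^B(k,k)$ is compatible with the tensor decomposition and acts as $(\text{surjection on }\Tor^\bullet(R,k)\text{-part}) \otimes (f_*\text{ on }\Tor^\bullet(k,k)\text{-part})$. Since $f$ is large, $f_* : \Tor^R(k,k) \to \Tor^S(k,k)$ is surjective; since the first factor is surjective as shown above, the tensor product of the two is surjective, hence $g_*$ is surjective, i.e. $g$ is large. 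I expect the main obstacle to be step two — pinning down the $A$- and $B$-module structures on $R$ and $S$ precisely enough to see that $\Tor^A(R,k)$ depends functorially on the pair $(R, M)$ in a way that makes $f$'s largeness transfer; one must be careful that the decomposition from Theorem \ref{p5} is natural with respect to the commuting square, so that $g_*$ genuinely respects the tensor factorisation rather than merely abstractly mapping one tensor product to another. An alternative, possibly cleaner, route avoiding the $\Tor$-bookkeeping is to use the $\Ext$-side characterisation (injectivity of $\Ext_{S \lJoin M}(k,k) \to \Ext_{R \lJoin M}(k,k)$) together with the homotopy Lie algebra description, but the $\Tor$ computation via Theorem \ref{p5} seems most direct here.
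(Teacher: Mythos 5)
Your route is the paper's route: the K\"unneth-type identification $\Tor^{R \lJoin M}(N,k) \cong \Tor^{R \lJoin M}(R,k) \otimes_k \Tor^R(N,k)$ coming from Theorem \ref{p5}, the surjectivity of $f_*$ on $\Tor(-,k)$ of $S$-modules from Theorem \ref{large}, and the short exact sequences $0 \to M \to R \lJoin M \to R \to 0$, $0 \to M \to S \lJoin M \to S \to 0$. The gap is at the sentence ``feeding this into the previous identifications, one concludes that $\Tor^A(R,k) \to \Tor^B(S,k)$ is surjective'': that conclusion does not follow in one step from surjectivity of $\Tor^R(M,k)\to\Tor^S(M,k)$, because $\Tor^A(M,k)$ is not $\Tor^R(M,k)$ --- since the $A$-action on $M$ factors through $\pi_R$, Theorem \ref{p5} gives $\Tor^A(M,k)\cong\Tor^A(R,k)\otimes_k\Tor^R(M,k)$, and the connecting maps of the first exact sequence give $\Tor^A_i(R,k)\cong\Tor^A_{i-1}(M,k)$ for $i\geq 2$ (with $\Tor^A_1(R,k)\cong M/\m M$). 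So surjectivity of $\Tor^A_i(R,k)\to\Tor^B_i(S,k)$ in degree $i$ presupposes the same surjectivity in lower degrees: you need an explicit induction on homological degree, and this bootstrap is the actual content of the proof, not a bookkeeping afterthought.

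The paper organises exactly this induction, but more cleanly: it proves, by induction on $n$, that $\Tor_n^{R\lJoin M}(N,k)\to\Tor_n^{S\lJoin M}(N,k)$ is surjective for \emph{every} $S$-module $N$ simultaneously, so that the case $N=M$ can be fed back through the commutative diagram of short exact sequences and the connecting isomorphisms to advance the degree for $\Tor^{R\lJoin M}(R,k)\to\Tor^{S\lJoin M}(S,k)$, and then specialises to $N=k$ at the very end. Your worry about naturality is resolvable the same way the paper resolves it implicitly: the decomposition of Theorem \ref{p5} is induced by tensoring a resolution of $R$ (resp.\ $S$) over the trivial extension with a resolution of $N$ over $R$ (resp.\ $S$), and these can be chosen compatibly with the commuting square $g\circ j_R=j_S\circ f$, $\pi_S\circ g=f\circ\pi_R$, while the connecting squares come from the commutative ladder of short exact sequences. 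If you add the induction on degree (either for all $N$ at once, as in the paper, or just for the pair $N\in\{M,k\}$, which is all the recursion touches), your argument closes up and agrees with the paper's.
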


\begin{proof} 
Any $S$-module $N$ has an $R$-module structure via $f$. So $N$ also has $R \lJoin M$ and $S \lJoin M$-module structures by defining the action of $M$ on $N$ to be trivial. We prove by induction on $n$ that the induced map $ \Tor^{R \lJoin M}_n(N, k) \rightarrow \Tor^{S \lJoin M}_n(N, k)$ is a surjection  for any $S$-module $N$.

Let $P_* \twoheadrightarrow N$ be a free resolution of $N$ as $S$-module and $Q_* \twoheadrightarrow S$ be that of $S$ as $S \lJoin M$-module. Then by Theorem \ref{p5}, the complex $Q_* \otimes_S P_* \twoheadrightarrow N$ is a free resolution of $N$ as $S \lJoin M$-module. Therefore, 
\begin{align*}
{\Tor}^{S \lJoin M}(N, k) &= \Ho((Q_* \otimes_S P_*) \otimes_{S \lJoin M} k) \\
&= \Ho((Q_* \otimes_{S \lJoin M} k ) \otimes_k (P_* \otimes_S k))\\
&= {\Tor} ^{S \lJoin M}(S, k) \otimes_k {\Tor}^S(N, k)
\end{align*}

Similarly, ${\Tor}^{R \lJoin M}(N, k) = {\Tor} ^{R \lJoin M}(R, k) \otimes_k {\Tor}^R(N, k)$. The induced map ${\Tor}^R(N, k) \rightarrow {\Tor}^S(N, k)$ is surjective by Theorem \ref{large}. We have the following commutative diagram.
\[
\xymatrix{
0 \ar[r]& M \ar@{=}[d] \ar[r] & R \lJoin M \ar[d]^g \ar[r] & R \ar[d]^f \ar[r]& 0\\
0 \ar[r]& M\ar[r]&  S \lJoin M \ar[r]&  S \ar[r]& 0
}
\]

The long exact sequence of Tor gives the following.
\[
\xymatrix{
{\Tor}_i ^{R \lJoin M}(R, k) \ar[r]^{\cong} \ar[d]& {\Tor}_{i - 1}^{R \lJoin M}(M, k)\ar[d]&\\
{\Tor}_i ^{S \lJoin M}(S, k) \ar[r]^{\cong}& {\Tor}_{i - 1}^{S \lJoin M}(M, k)
}
\]
By induction hypothesis ${\Tor}_{i}^{R \lJoin M}(M, k) \rightarrow {\Tor}_{i}^{S \lJoin M}(M, k)$ is surjective for $i \leq n - 1$. So ${\Tor}_i ^{R \lJoin M}(R, k) \rightarrow {\Tor}_i ^{S \lJoin M}(S, k)$ is surjective for $i \leq n$. Therefore,  ${\Tor}_n^{R \lJoin M}(N, k) \rightarrow {\Tor}_n^{S \lJoin M}(N, k)$ is surjective. So the induction is complete and we have that ${\Tor}^{R \lJoin M}(N, k) \rightarrow {\Tor}^{S \lJoin M}(N, k)$ is surjective. Now the result follows by choosing $N = k$.
\end{proof}

The following is suggested by Iyengar.

\begin{lemma}\label{ap7}
Let $f : (R, \m) \twoheadrightarrow (S, \n)$ be a large homomorphism of local rings. Then $S$ is Golod if $R$ is so.
\end{lemma}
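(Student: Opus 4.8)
The plan is to deduce this from the homotopy Lie algebra characterisation of the Golod property in Theorem \ref{p9}, together with Proposition \ref{p8}. Recall (from the discussion preceding Theorem \ref{p9}) that $\Ext_R(k,k)$ and $\Ext_S(k,k)$ are graded connected Hopf algebras, namely the universal enveloping algebras of the homotopy Lie algebras $\pi(R)$ and $\pi(S)$, and that $f$ induces a morphism of graded Hopf algebras $f^{*} : \Ext_S(k,k) \to \Ext_R(k,k)$.

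First I would unwind the hypothesis that $f$ is large. By Definition \ref{ap5} this means exactly that $f^{*}$ is injective. Since $f^{*}$ is a morphism of graded connected Hopf algebras and $\Ext_S(k,k) = U(\pi(S))$ is generated as an algebra by its space of primitives $\pi(S)$, the restriction of $f^{*}$ to primitives is the morphism of graded Lie algebras $\pi(f) : \pi(S) \to \pi(R)$; it is injective because $f^{*}$ is, and its image is a graded Lie subalgebra of $\pi(R)$ (the image of a Lie algebra morphism is always a subalgebra, and by the Poincar\'e--Birkhoff--Witt theorem this subalgebra generates the image of $f^{*}$). As $\pi(f)$ preserves the internal grading, restricting to degrees $\geq 2$ identifies $\pi^{\geq 2}(S)$ with a graded Lie subalgebra of $\pi^{\geq 2}(R)$.

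Next I would bring in the hypothesis on $R$. Since $R$ is Golod, Theorem \ref{p9} says that $\pi(K^R) = \pi^{\geq 2}(R)$ is a free graded Lie algebra; being concentrated in degrees $\geq 2$, it is free on a positively graded vector space, so Proposition \ref{p8} applies and shows that every graded sub-Lie-algebra of it is again free. In particular $\pi(K^S) = \pi^{\geq 2}(S)$ is a free graded Lie algebra. Applying Theorem \ref{p9} in the opposite direction to $S$, we conclude that $S$ is a Golod ring.

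I expect the only delicate point to be the first step: extracting from the bare definition of largeness the statement that $\pi(f)$ is injective with image a Lie subalgebra of $\pi(R)$. This rests on the Hopf algebra structure of the $\Ext$-algebras and the standard relationship between a graded Lie algebra and its enveloping algebra (primitives, PBW); once it is in hand, the remainder is a formal consequence of Theorem \ref{p9} and Proposition \ref{p8}, with no computation.
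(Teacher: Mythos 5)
Your proposal is correct and follows essentially the same route as the paper: largeness gives injectivity of the induced map $\pi(S)\to\pi(R)$ on homotopy Lie algebras, Theorem \ref{p9} identifies Golodness of $R$ with freeness of $\pi^{\geq 2}(R)$, and Proposition \ref{p8} then shows the graded sub-Lie-algebra $\pi^{\geq 2}(S)$ is free, so $S$ is Golod by Theorem \ref{p9} again. The only cosmetic caveat is that identifying $\pi(S)$ with the primitives of $\Ext_S(k,k)$ is literally valid only in characteristic zero; in general one should instead invoke the functoriality of the homotopy Lie algebra together with the embedding $\pi(S)\subset U(\pi(S))=\Ext_S(k,k)$, which yields the same injectivity and is what the paper implicitly uses.
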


\begin{proof}
By Theorem $\ref{p9}$, it is enough to show that the homotopy Lie algebra $\pi^{\geq 2}(S)$ is free. Since $f$ is large, the induced map on Lie algebras \it{viz.} $f^* :  \pi(S) \rightarrow \pi(R)$ is an injection. The result now follows from Proposition \ref{p8}.
\end{proof}

The following theorem strengthens Theorem \ref{p6}.

\begin{theorem}\label{ap8}
Let $f : (R, \m) \twoheadrightarrow (S, \n)$ be a large homomorphism of local rings and $M$ be an $S$-module. Then $M$ is a Golod $S$-module if $M$ is so as an $R$-module.
\end{theorem}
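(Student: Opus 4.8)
The plan is to exploit the trivial-extension characterisation of Golod modules (Theorem \ref{ap1}) together with the large-homomorphism descent of the Golod property for \emph{rings} (Lemma \ref{ap7}) and the transfer of largeness to trivial extensions (Lemma \ref{ap6}). Concretely: suppose $M$ is a Golod $R$-module. By Theorem \ref{ap1} the trivial extension $R \lJoin M$ is a Golod ring. By Lemma \ref{ap6} the induced map $g : R \lJoin M \twoheadrightarrow S \lJoin M$, $(r,m) \mapsto (f(r), m)$, is a large homomorphism. Applying Lemma \ref{ap7} to $g$, we conclude that $S \lJoin M$ is a Golod ring. Then invoking Theorem \ref{ap1} once more in the reverse direction, $M$ is a Golod $S$-module, as desired.

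The main point to verify carefully is that all the hypotheses are genuinely met: Lemma \ref{ap6} requires $f$ to be large and $M$ an $S$-module, which is exactly our situation; its output feeds directly into Lemma \ref{ap7} whose only hypothesis is largeness of the map and Golodness of the source, both now established. So the argument is essentially a three-line chain once the auxiliary lemmas are in place, and there is no real obstacle — the work has already been front-loaded into Lemmas \ref{ap6} and \ref{ap7}. One small thing I would double-check is that the local-homomorphism conventions are preserved under the trivial-extension construction (i.e. that $g$ is again a local homomorphism of local rings), but this is immediate since $g$ sends non-units to non-units precisely because $f$ does and the $M$-component is nilpotent.

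An alternative, more direct route avoiding the trivial-extension detour would be to argue on homotopy Lie algebras directly: by Theorem \ref{p9} (for rings) and Theorem \ref{levingm} together with Proposition \ref{p6.3}, one could try to show that the largeness of $f$ forces the relevant Massey products on $\Ho(M)$ over $K^S$ to vanish, given that they vanish over $K^R$. But this would essentially reprove Lemma \ref{ap6} and Lemma \ref{ap7} inline, so the trivial-extension argument is cleaner and is the one I would present.
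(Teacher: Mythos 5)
Your proposal is correct and is essentially the paper's own proof: the paper likewise forms the trivial extensions, applies Lemma \ref{ap6} to get largeness of $R \lJoin M \twoheadrightarrow S \lJoin M$, Lemma \ref{ap7} to descend Golodness of the ring, and then transfers back to $M$; the only cosmetic difference is that you cite Theorem \ref{ap1} while the paper cites Theorem \ref{m2}, of which Theorem \ref{ap1} is the immediate special case.
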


\begin{proof}
The induced map $R \lJoin M \twoheadrightarrow S \lJoin M$ is a large homomorphism by Lemma \ref{ap6}. If $M$ is a Golod $R$-module, then $R \lJoin M$ is a Golod ring by Theorem \ref{m2}. By Lemma \ref{ap7}, the ring $S \lJoin M$ is also a Golod ring. Now the result follows by another application of Theorem \ref{m2}.
\end{proof}

We conclude this article with the following question. Note that if $R$ is a regular local ring and $I$ is an ideal of $R$, then $R \times_{R/I} R$ is always a Golod ring.

\begin{question}
Let $R_1$, $R_2$, $S$ be local rings and $\varepsilon_1 : R_1 \rightarrow S$,  $\varepsilon_2 : R_2 \rightarrow S$ be surjective local homomorphisms. Let $A = R_1 \times_S R_2$. Under which conditions is $A$ a Golod ring? If both  $R_1$ and $R_2$ are regular local rings, is $A$ a Golod ring?
 \end{question}
 
 \begin{acknowledgement}
I am indebted to Srikanth B. Iyengar for encouragement, and many inspiring discussions concerning this paper. I also owe many thanks to H. Ananthnarayan for helping me learn the subject.
 \end{acknowledgement}

\end{document}